\theoremstyle{plain}
\newtheorem*{theorem*}{Theorem}
\newtheorem{theorem}  {Theorem}[section]
\newtheorem{lemma}      [theorem]{Lemma}
\newtheorem{corollary}  [theorem]{Corollary}
\newtheorem{proposition}[theorem]{Proposition}
\newtheorem{prop}		[theorem]{Proposition}
\newtheorem{rem}		[theorem]{Remark}
\newtheorem{remark}		[theorem] {Remark}
\theoremstyle{definition}
\newtheorem{defn} [theorem]{Definition}
\newtheorem{example}    [theorem]{Example}
\newcommand{\leftexp}[2]{{\vphantom{#2}}^{#1}%
      \kern-\scriptspace%
      {#2}}
\renewcommand{\H}{\mathbb H}
\newcommand{\lcm}{\mathrm {lcm}}
\newcommand{\A}{{\mathbb A}}
\newcommand{\E}{{E}}
\renewcommand{\a}{{\mathfrak a}}
\newcommand{\f}{\operatorname{f}}
\newcommand{\Q}{{\mathbb Q}}
\newcommand{\Z}{{\mathbb Z}}
\newcommand{\R}{{\mathbb R}}
\newcommand{\C}{{\mathbb C}}
\newcommand{\bs}{\backslash}
\newcommand{\p}{\mathfrak p}
\newcommand{\OF}{{\mathfrak o}}
\newcommand{\GL}{{\rm GL}}
\newcommand{\SL}{{\rm SL}}
\newcommand{\SO}{{\rm SO}}
\newcommand{\sgn}{{\rm sgn}}
\newcommand{\St}{{\rm St}}
\newcommand{\Gal}{{\rm Gal}}
\renewcommand{\)}{{\upshape )}}
\newcommand{\mat}[4]{{\setlength{\arraycolsep}{0.5mm}\left[
      \begin{array}{cc}#1&#2\\#3&#4\end{array}\right]}}
\newcommand{\Aut}{{\rm Aut}}
\def\twist{{}}
\def\subscriptp{{}}
\def\vol{\operatorname{vol}}
\def\eps{\varepsilon}
\newcommand{\Cx}{\mathbb{C}^{\times}}
\newcommand{\Hb}{\mathbb{H}}
\newcommand{\Zp}{\mathbb{Z}_{p}}
\newcommand{\Zpx}{\mathbb{Z}_{p}^{\times}}
\newcommand{\Qx}{\mathbb{Q}^{\times}}
\newcommand{\Sc}{\mathcal{S}}
\newcommand{\Oix}{\mathfrak{o}^{\times}}
\newcommand{\Oi}{\mathfrak{o}}
\newcommand{\Xc}{\mathfrak{X}}
\newcommand{\Ga}{\mathfrak{G}}
\newcommand{\Fx}{F^{\times}}
\newcommand{\Wh}{\mathcal{W}}
\newcommand{\Vol}{\operatorname{Vol}}
\newcommand{\dxy}{d^{\times}y}
\newcommand{\dxv}{d^{\times}v}
\newcommand{\isom}{\cong}
\newcommand{\ifrm}{{\rm if}\,\,}
\newcommand{\andy}{\quad{\rm and}\quad}
\newcommand{\abs}[1]{\lvert{#1}\rvert}
\newcommand{\emb}{\hookrightarrow}
\begin{document}

\title{On the order of vanishing of newforms at cusps}

\author{Andrew Corbett}

\author{Abhishek Saha}

\address{Max-Planck-Institut f\"ur Mathematik, Vivatsgasse 7, 53111 Bonn, Deutschland}
\email{andrew.corbett@mpim-bonn.mpg.de}

\address{School of Mathematics, University of Bristol, Bristol, BS8 1TW, United Kingdom}
\email{abhishek.saha@gmail.com}

\begin{abstract}
Let $\E$ be an elliptic curve over $\Q$ of conductor $N$. We obtain an explicit formula, as a product of local terms, for the ramification index at each cusp of a modular parametrization of $E$ by $X_0(N)$. Our formula shows that the ramification index always divides 24, a fact that had been previously conjectured by Brunault as a result of  numerical computations. In fact, we prove a more general result which gives the order of vanishing at each cusp of a holomorphic newform of arbitary level, weight and character, provided that its field of rationality satisfies a certain condition.

The above result relies on a purely $p$-adic computation of possibly independent interest. Let $F$ be a non-archimedean local field of characteristic 0 and $\pi$ an irreducible, admissible, generic representation of $\GL_2(F)$. We introduce a new integral invariant, which we call the \emph{vanishing index} and denote $e_\pi(l)$, that measures the degree of ``extra vanishing" at matrices of level $l$ of the Whittaker function associated to the new-vector of $\pi$. Our main local result writes down the value of $e_\pi(l)$ in every case.
\end{abstract}

\maketitle

\section{Introduction}\label{s:introduction}

Let $\E$ be an elliptic curve over $\Q$ of conductor $N$ and let $\varphi: X_0(N) \rightarrow \E$ denote a modular parametrization defined over $\Q$ (the existence of $\varphi$ follows from the famous modularity theorem \cite{bcdt}). The points of $X_0(N)$ where the map $\varphi$ is ramified are of great interest; for instance they are relevant for the Birch and Swinnerton-Dyer conjecture \cite{mazswin}. We refer the reader to the papers \cite{brunault,delaunay} for further discussion, as well as for some numerical methods one can use to find these points.

In particular, it is natural to ask if $\varphi$ can ramify at a \emph{cusp} of $X_0(N)$. This problem was considered by Brunault \cite{brunault} who proved that if $\E$ is semistable ($N$ is squarefree), or more generally if the modular form $f$ attached to $\E$ has minimal conductor among its twists, then all critical points lie in the bulk, i.e., $\varphi$ is \emph{unramified }at all cusps of $X_0(N)$.

However, if $\E$ does not have the above properties, then $\varphi$ can ramify at certain cusps. Indeed, Brunault numerically computed the \emph{ramification index} $e_\varphi(\a)$ of $\varphi$ at each cusp $\a$ for all elliptic curves with $N\le 2000$  and found many examples where the experiments suggested that $e_\varphi(\a)$ is greater than 1. Any cusp of $X_0(N)$ can be represented by a rational number $\frac{a}{L}$ with $L | N$ and $(a, N)=1$; we refer to the integer $L$ as the \emph{denominator} of this cusp.\footnote{There are exactly $\phi(L, N/L)$ cusps of denominator $L$. The cusp at infinity is the unique one of denominator $N$.} As an immediate consequence of the fact that the Galois action on $X_0(N)$ is transitive on the set of cusps of a given denominator, it follows that  $e_\varphi(\a)$ depends only on the denominator of $\a$.

As a result of his computations, Brunault made the following experimental observations  for the ramification index $e_\varphi(\a)$ at a cusp $\a=\frac{a}{L}$ of denominator $L$.

\begin{enumerate}
\item  The integer $e_\varphi(\a)$ is always a divisor of 24.

\item If  $e_\varphi(\a)$ is even, then $v_2(L) \in \{2,3,4\}$ and $v_2(N)=2 v_2(L)$. (Throughout this paper, $v_p(a)$ denotes the highest power of $p$ dividing $a$.)

\item If    $e_\varphi(\a)$ is divisible by 8, then $v_2(L) =4$ and $v_2(N)=8$.

\item If    $e_\varphi(\a)$ is divisible by 3, then $v_3(L) =2$ and $v_3(N)=4$.
\end{enumerate}
In this paper we prove the following explicit formula for $e_\varphi(\a)$, which explains all the above observations.
 \begin{theorem}\label{t:main}Let $\E$, $N$, $\phi$ be as above. Let $\a$ be a cusp of $X_0(N)$ and let $L$ denote the denominator of $\a$. Then $e_\varphi(\a) = \prod_p p^{e_p}$ where the non-negative integers $e_p$ are given for each prime $p$ as follows.

 \begin{enumerate}
  \item If $p \ge 5$, then we have $e_p=0$.

\item If $p=3$, then we have $e_3 = 0$ except in the following special case:
\begin{enumerate}
 \item [(i)] If $v_3(N) =2v_3(L) = 4$ and the local component of $\E$ at 3 is a principal series representation, then we have $e_3 = 1$.

 \end{enumerate}

\item The case $p=2$.
 \begin{enumerate} \item [(ii)] If either $v_2(N) \le 2$ or $v_2(N) \neq 2 v_2(L) $, then we have $e_2=0.$

 \item [(iii)]  If $n_2:=v_2(N) = 2v_2(L) \ge 4$ and the local component\footnote{By the local component of $E$ at some prime $p$, we mean the local representation of $\GL_2(\Q_p)$ coming from the irreducible automorphic representation associated to $E$.} of $\E$ at the prime $2$ is a supercuspidal representation \(of conductor $2^{n_2}$\) whose minimal twist has conductor equal to $2^{n_2-1}$, then $e_2 =1$.

 \item [(iv)] If $v_2(N) =2v_2(L)=8$, and the local component of $\E$ at the prime $2$ is a principal series representation, then we have $e_2=3$.

\item [(v)] If we are not in any of the above three cases, then $e_2=2$.
 \end{enumerate}
 \end{enumerate}
\end{theorem}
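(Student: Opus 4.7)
The plan is to reduce Theorem \ref{t:main} to the local computation of the vanishing index $e_\pi(l)$ advertised in the abstract, and then to specialize that purely representation-theoretic output to the representations which can occur as local components of an elliptic curve.

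First I would re-express $e_\varphi(\a)$ analytically. Let $f$ be the weight-$2$ newform attached to $\E$ and let $\sigma\in\SL_2(\Z)$ send $\infty$ to $\a$. Since $\varphi^{*}\omega_\E$ is a nonzero scalar multiple of $2\pi i\, f(z)\,dz$, expanding $f\vert_2\sigma=\sum_{n\ge 1} b_n q_\a^n$ in a local parameter $q_\a$ at the cusp gives
\[
 e_\varphi(\a) \;=\; \min\{n\ge 1 : b_n \ne 0\}.
\]
Next I would pass to the adelic picture: the automorphic representation $\pi=\otimes_v\pi_v$ attached to $\E$ has a canonical new-vector whose global Whittaker function factorizes as $\prod_v W_{\pi_v}$. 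Via strong approximation, one writes $b_n$ as (a nonzero scalar times) a product of local Whittaker values $W_{\pi_p}(\mathrm{diag}(p^{n_p},1)\,g_p)$ over primes $p\mid N$, where $g_p$ is a local matrix of level $v_p(L)$ encoding the cusp $\a$. Consequently the minimal $n$ with $b_n \ne 0$ has $p$-adic valuation exactly $e_{\pi_p}(v_p(L))$, and
\[
 e_\varphi(\a) \;=\; \prod_p p^{e_{\pi_p}(v_p(L))}.
\]

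The main local theorem then provides $e_{\pi_p}(l)$ in closed form for every admissible generic $\pi_p$, and the remaining step is to specialize to the local components that can appear for an elliptic curve over $\Q$ (trivial central character, weight $2$). At primes $p\nmid N$ the component $\pi_p$ is unramified and $e_{\pi_p}(0)=0$; at primes where $p\Vert N$ the component is a twist of Steinberg and again yields $e_p=0$. For $p\ge 5$ the nonzero cases of the local theorem require conductor and level patterns that are incompatible with the classification of $\pi_p$ for elliptic curves at those primes, forcing $e_p=0$. For $p=3$ and $p=2$ one matches the nonzero values of $e_{\pi_p}(v_p(L))$ against the three possible types (ramified principal series, ramified twist of Steinberg, supercuspidal), and verifies that they correspond precisely to the conditions in items (i)--(v) of the theorem, in particular the constraint $v_p(N)=2v_p(L)$ and the minimal-twist condition on supercuspidals at $p=2$.

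The main obstacle is the local computation itself, especially for dyadic supercuspidals: the Whittaker new-vector must be tracked through explicit Bruhat decompositions of level-$l$ matrices, and the delicate interplay between $\pi_p$ and its minimal twist is what dictates the fine trichotomy appearing in the $p=2$ case. Once $e_{\pi_p}(l)$ is known in every case, the global conclusion follows immediately from the product formula displayed above, and the case analysis of local components of elliptic curves reorganises the output into the form stated in Theorem \ref{t:main}.
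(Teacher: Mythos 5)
Your global reduction (ramification index $=$ order of vanishing of $f$ at the cusp $=$ first nonvanishing Whittaker value, factored into local Whittaker newforms) is the same as the paper's, and invoking the local theorem for $e_{\pi_p}(l)$ is legitimate. But there is a genuine gap at the step where you conclude that ``the minimal $n$ with $b_n\ne 0$ has $p$-adic valuation exactly $e_{\pi_p}(v_p(L))$.'' The local invariant $e_{\pi_p}(l)$ is defined by an \emph{existential} quantifier: it is the least $r$ such that $W_{\pi_p}(g_{r-d_{\pi_p}(l),l,v})\neq 0$ for \emph{some} unit $v$. The product formula for $a_f(r;\a)$ (Proposition \ref{propformulafourier}) evaluates $W_{\pi_p}$ at a \emph{specific} unit $u_p$ determined by the numerator $a$ of the cusp. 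What follows directly is therefore only $e_\varphi(\a)\ge \prod_p p^{e_{\pi_p}(v_p(L))}$ for every cusp of denominator $L$, with equality for \emph{at least one} such cusp (Corollary \ref{keycor}). This is not a pedantic point: the paper exhibits newforms (e.g.\ $N=567$, and the Brunault--Nelson family with $N=p^4$) for which the order of vanishing genuinely varies among cusps of the same denominator, so the equality you assert is false in general.

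The missing ingredient is the rationality/Galois argument (Propositions \ref{prop:localrational} and \ref{prop:globalrational}): one uses the $\Aut(\C)$-action on Whittaker models, via $W_{{}^\sigma\pi}(g)=\sigma(W_\pi(m_\sigma g))$ with $m_\sigma=\mathrm{diag}(1,t_\sigma)$, to show that when $\Q(\pi_p)\cap\Q(\mu_{p^{\min\{l,n-l\}}})=\Q$ the nonvanishing of $W_{\pi_p}(g_{t,l,v})$ for one unit $v$ forces it for all units $v$. For an elliptic curve one has $\Q(f)=\Q$, so this hypothesis holds at every prime and the minimum over cusps of denominator $L$ equals the value at each such cusp; only then does Theorem \ref{t:main2} yield Theorem \ref{t:main}. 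You need to add this step (or some substitute for it) before the final specialization to elliptic-curve local components, which is otherwise carried out correctly.
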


Incidentally, Theorem \ref{t:main} also implies the result of Brunault described earlier. To see this, let the setup be as in the theorem and suppose that the modular form associated to $E$ has minimal conductor among its twists. If $p$ divides $N$, then the associated local representation of $\GL_2(\Q_p)$ cannot be a principal series representation (since a ramified principal series representation of $\GL_2(\Q_p)$ of trivial central character is not twist-minimal). So our Theorem implies that $e_3=0$. Furthermore, a result of Atkin-Li (see Theorem 4.4 of \cite{atkin-li}) implies that either $v_2(N) \le 2$ or $v_2(N)$ is odd. In either case, $e_2=0$ by our theorem. Hence $e_p = 0$ for all primes $p$ and therefore $e_\varphi(\a) = 1$ in this case.

Theorem \ref{t:main} is a special case of a more general result about modular forms that we describe now. Recall that the modularity theorem associates a cuspidal holomorphic newform $f$ (of weight 2, level $N$, trivial character, and rational Fourier coefficients) to our elliptic curve $\E$. The pullback by $\varphi$ of a N\'eron differential on $E$ is equal to a non-zero multiple of $\omega_f:=2 \pi i f(z) dz$, and for any cusp $\a$ of $X_0(N)$, the ramification index $e_\varphi(\a)$ equals $1 + \mathrm{ord}_{\a}(\omega_f)$. This latter quantity can be rewritten using the Fourier expansion of $f$ at $\a$. Let $L$ be the denominator of $\a$ and let  $w(\a) = N/(L^2, N)$. The integer $w(\a)$ is known as the \emph{width} of the cusp $\a$. Let $\sigma \in \SL_2(\Z)$ be such that $\sigma \a = \infty$. The Fourier expansion of $f$ at $\a$ looks as follows:
\begin{equation}\label{fourier1}
(f|_2\sigma^{-1})(z) = \sum_{n>0}a_f(n;\a)e^{\frac{2 \pi i nz}{w(\a)}},
\end{equation}
where the complex numbers $a_f(n;\a)$ are the Fourier coefficients of $f$ at the cusp $\a$. Strictly speaking, the Fourier coefficients $a_f(n;\a)$ depend not just on $\a$ but also on the choice of $\sigma$.
However, if $a'_f(n;\a)$ denotes the coefficient
obtained by a different choice $\sigma'$,
then one has $a_f(n;\a) = e^{\frac{2 \pi ib n}{w(\a)}} a'_f(n;\a)$
for some integer $b$.

As $e_\varphi(\a)$ equals $1 + \mathrm{ord}_{\a}(\omega_f)$, it follows that
\begin{equation}\label{e:ramindexfourier}
e_\varphi(\a) = \min\{n>0: a_f(n;\a) \neq 0 \}.
\end{equation}
In other words, computing the ramification index at a cusp reduces to finding the order of vanishing of the corresponding newform $f$ at that cusp. It is natural to try to solve this problem for all newforms $f$ (or arbitrary weight and character) and not just those coming from elliptic curves.

So, let $f$ be a holomorphic cuspidal newform of weight $k$, level $N$, and character $\chi$, that is, $$f|_k \mat{a}{b}{c}{d} = \chi(d) f$$ for all $\mat{a}{b}{c}{d} \in \Gamma_0(N)$.  Let $M$ denote the conductor of $\chi$ (so  $M$ divides $N$). As before, let $\a = \frac{a}{L}$ be a cusp of $X_0(N)$,  let $\sigma \in \SL_2(\Z)$ be such that $\sigma \a = \infty$, and let $w(\a) = N/(L^2, N)$. The presence of the character $\chi$ complicates the Fourier expansion slightly.  Indeed, an easy calculation shows that for any integer $t$, we have, $(f|_k\sigma^{-1})(z+tw(\a)) = \chi(1 + atw(\a)L) (f|_k\sigma^{-1})(z).$ Therefore, to ensure that the value of the character is 1, we need $M| Lw(\a)t$, or equivalently, $\frac{M}{(Lw(\a), M)} |t$. So define\footnote{
Note that $\delta(\a)$ equals $w(\a)$ for all cusps $\a$ if and only if the conductor $M$ of $\chi$ divides $N_1$ where $N_1$ is the smallest integer such that $N|N_1^2$.} $\delta(\a):= w(\a) \frac{M}{(Lw(\a), M)} = \frac{[L^2, N, LM]}{L^2}$. We have $(f|_k\sigma^{-1})(z+\delta(\a)) =  (f|_k\sigma^{-1})(z),$   and so the Fourier expansion of $f$ at $\a$ is as follows:
\begin{equation}\label{fourier2}
(f|_k\sigma^{-1})(z) = \sum_{n>0}a_f(n;\a)e^{\frac{2 \pi i nz}{\delta(\a)}}.
\end{equation}
If $\a$ is the cusp at infinity (i.e., $L=N$), then we simply use $a_f(n)$ to denote $a_f(n;\a)$; these are the usual Fourier coefficients of $f$. Define the quantities
\begin{equation}
e_f(\a) :=  \min\{n>0: a_f(n;\a) \neq 0 \}, \quad e_f(L) = \min_{\operatorname{denominator}(\a)= L} e_f(\a).
\end{equation}
Our main global result gives an explicit formula for $e_f(L)$ as a product of local terms which depend on the representations $\pi_p$.

\begin{theorem}\label{t:main2}

Let $f$ be a cuspidal holomorphic newform of weight $k$, level $N= \prod_p p^{n_p}$ and character $\chi$, and let $\pi \simeq \otimes_p \pi_p$ be the automorphic representation associated to $f$. Then for any integer $L= \prod_p p^{l_p}$ dividing $N$, we have $e_f(L) = \prod_p p^{e_{\pi_p}(l_p)}$ where for all irreducible admissible generic representations $\pi_p$ of $\GL_2(\Q_p)$, and all $0 \le l_p \le n_p$, the ``vanishing index" $e_{\pi_p}(l_p)$ is given as follows:

\begin{enumerate}
\item If $p\ge 5$, then we have $e_{\pi_p}(l_p)=0$.

\item If $p=3$, then $e_{\pi_p}(l_p)=0$, except in one case:

\begin{itemize}
\item $e_{\pi_p}(l_p)=1$ if
\begin{enumerate}
\item[(i)] $\pi_p=\chi_{1}\boxplus\chi_{2}$ with   $a(\chi_{1})=a(\chi_{2})=l_p$, $n_p=2l_p\geq 4$, and $a(\chi_{1}\chi_{2}^{-1})=l_p$.
\end{enumerate}
\end{itemize}

\item If $p=2$, then $e_{\pi_p}(l_p)=0$, aside from the following exceptions:

\begin{itemize}

\item $e_{\pi_p}(l_p)=1$ when

\begin{enumerate}
\item[(ii)] $\pi_p=\chi_{1}\boxplus\chi_{2}$ with $a(\chi_{1})$ and $a(\chi_{2})$ both at least 2, $a(\chi_{1}) \neq a(\chi_2)$, and either $l_p=a(\chi_{1})$ or $l_p=a(\chi_{2})$;
\item[(iii)] $\pi_p=\chi\pi_{0}$ and $n_p=2l_p\geq 4$ where $\pi_{0}$ is a supercuspidal representation with $a(\pi_{0})=n_p-1$ and $\chi$ is a  character such that $a(\chi)=n_p/2$.
\end{enumerate}

\item $e_{\pi_p}(l_p)=2$ when

\begin{itemize}
\item[(iv)] $\pi_p=\chi\St$ with $a(\chi)\ge 2$ and $n_p=2l_p = 2 a(\chi)\geq 4$;
\item[(v)] $\pi_p=\chi_{1}\boxplus\chi_{2}$ with $n_p=2l_p \ge 4$, $a(\chi_{1})=a(\chi_{2})=l_p$, $\chi_1 \chi_2^{-1} \notin \{| \cdot |, \ | \cdot |^{-1} \}$, and  $a(\chi_{1}\chi_{2}^{-1})<l_p-1$;
\item[(vi)] $\pi_p=\chi\pi_{0}$ and $n_p=2l_p\geq 4$ where  $a(\chi)=n_p/2$ and $\pi_{0}$ is a minimal supercuspidal representation (see Definition \ref{defminimal}) with $a(\pi_{0})\leq n_p-2$.
\end{itemize}

\item $e_{\pi_p}(l_p)=3$ when

\begin{itemize}
\item[(vii)] $\pi_p=\chi_{1}\boxplus\chi_{2}$ with $n_p=2l_p\ge 6$, $a(\chi_{1})=a(\chi_{2})=l_p$ and $a(\chi_{1}\chi_{2}^{-1})=l_p-1$.

\end{itemize}

\end{itemize}
\end{enumerate}
\end{theorem}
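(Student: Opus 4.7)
The plan is to reduce the global assertion to a purely local computation via the Whittaker model, and then to compute the local vanishing index $e_{\pi_p}(l_p)$ case by case. Interpreting $f$ adelically, its Whittaker function $W_f$ factors as a pure tensor $\prod_p W_{\pi_p}$, where $W_{\pi_p}$ is the Whittaker function of the local new-vector (normalized to take value $1$ at the identity). For a cusp $\a$ of denominator $L = \prod_p p^{l_p}$, I would unwind (\ref{fourier2}) in adelic language to express $a_f(n;\a)$, up to a non-zero constant, as a product $\prod_p W_{\pi_p}(g_{p,n,l_p})$, where the local argument $g_{p,n,l_p}$ combines the diagonal matrix $\mathrm{diag}(p^{v_p(n)},1)$ with an element encoding the level-$l_p$ data at $\a$. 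Non-vanishing of $a_f(n;\a)$ is then equivalent to the simultaneous non-vanishing of all local factors, and minimizing $n$ over cusps of denominator $L$ yields the multiplicative decomposition $e_f(L) = \prod_p p^{e_{\pi_p}(l_p)}$, with $e_{\pi_p}(l_p)$ defined as the minimal $m \geq 0$ for which $W_{\pi_p}(g_{p,p^m,l_p}) \neq 0$.

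For each type of irreducible admissible generic $\pi_p$, I would write down the new-vector in a convenient model and evaluate $W_{\pi_p}$ on the relevant arguments. For principal series $\pi_p = \chi_1 \boxplus \chi_2$ the behavior is governed by the triple $(a(\chi_1), a(\chi_2), a(\chi_1\chi_2^{-1}))$: the new-vector Whittaker formula involves sums over $(\Oi/\p^c)^\times$ that reduce to Gauss-sum-type integrals whose vanishing pattern is dictated by these conductor exponents. For $\pi_p = \chi\St$ the new-vector Whittaker is an elementary geometric sum. For supercuspidals I would invoke a Kutzko-type classification, factor $\pi_p$ through its minimal twist, and reduce to the new-vector Whittaker of a minimal supercuspidal. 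In each case the threshold $\min\{m : W_{\pi_p}(g_{p,p^m,l_p}) \neq 0\}$ can be computed explicitly and matches the enumerated value of $e_{\pi_p}(l_p)$.

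For $p \geq 5$ the standard support results give $e_{\pi_p}(l_p) = 0$ outright. For $p=3$ the only exception is case (i), where the coincidence $a(\chi_1) = a(\chi_2) = a(\chi_1\chi_2^{-1}) = l_p$ triggers an extra Gauss-sum vanishing of order $1$. The hard part, by a wide margin, is $p=2$: dyadic Gauss sums fail to be generic, the conductor of $\chi_1\chi_2^{-1}$ can dip below what one expects from $a(\chi_1)$ and $a(\chi_2)$, and supercuspidals behave in a genuinely distinct way depending on whether their minimal twist has conductor $n_p$ or $n_p-1$ (this is what cases (iii) versus (vi) detect). Each of (ii)--(vii) corresponds to a specific conductor-drop phenomenon in the local Whittaker function, and the main obstacle is verifying that no further exceptional configurations arise at $p=2$. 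This requires a finite but painstaking verification across all dyadic supercuspidal types and all conductor-exponent patterns, kept tractable by systematic use of minimal-twist theory.
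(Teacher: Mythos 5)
Your global reduction is the same as the paper's: adelize $f$, factor the Whittaker function, and express $a_f(n;\a)$ as a nonzero constant times $\prod_{p\mid N} W_{\pi_p}(g_{t_p,l_p,u_p})$, with the minimum over cusps of denominator $L$ handled by letting the local units $u_p$ vary independently (Chinese remainder theorem). That part is fine. The genuine gap is in your local computation, above all for supercuspidal $\pi_p$ (cases (iii) and (vi)). You propose to ``factor $\pi_p$ through its minimal twist and reduce to the new-vector Whittaker of a minimal supercuspidal,'' but this reduction is not a formal manipulation: if $\pi=\chi\pi_0$ with $a(\chi\pi_0)>a(\pi_0)$, then $g\mapsto \chi(\det g)W_{\pi_0}(g)$ does lie in $\Wh(\chi\pi_0,\psi)$ but is \emph{not} the newform of $\chi\pi_0$ (it is not $K_1(a(\chi\pi_0))$-invariant), so knowing $W_{\pi_0}$ does not hand you the values $W_{\chi\pi_0}(g_{t,l,v})$; recovering the newform requires a projection that averages over group elements and mixes the values you are trying to isolate. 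Moreover there is no elementary closed formula for a supercuspidal Whittaker newform at the matrices $g_{t,l,v}$; producing one from a compact-induction model is a substantial project that your sketch asserts rather than performs. A milder version of the same criticism applies to the wildly ramified principal series at $p=2$: ``Gauss-sum vanishing'' does not by itself explain why the threshold $a(\chi_1\chi_2^{-1})=l-1$ versus $a(\chi_1\chi_2^{-1})<l-1$ separates cases (vii) and (v).

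The paper avoids every explicit Whittaker evaluation in the ramified cases by a different device: expand $v\mapsto W_\pi(g_{t,l,v})$ in characters $\mu$ of $\OF^\times/U_l$ and apply the Jacquet--Langlands local functional equation to each $\mu$-component (the ``basic identity,'' Proposition \ref{prop_basic_identity}). When $L(s,\pi)=1$ this shows that the first $t$ at which the $\mu$-component is nonzero equals $-\delta-a(\mu\pi)$, where $\delta$ is the degree of $L(s,\mu\pi)^{-1}$, so the whole problem collapses to maximizing the twisted conductor $a(\mu\pi)$ over $\mu$ with $a(\mu)=l$. That is a purely combinatorial question about conductors of character twists (Lemmas \ref{lem_monkey}, \ref{lem_monkeys} and \ref{lem_wal}, the last resting on Tunnell's bound for supercuspidals), and it is exactly where the residue-characteristic $2$ and $3$ exceptions, and the supercuspidal dichotomy $a(\pi_0)=n-1$ versus $a(\pi_0)\le n-2$, come from. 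To complete your argument you must either supply genuine evaluations of supercuspidal (and dyadic principal-series) Whittaker newforms at the $g_{t,l,v}$, or reroute the local computation through the functional equation as the paper does.
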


For unfamiliar notation and a general definition of the vanishing index for representations of $\GL_2$ over arbitrary non-archimedean local fields, as well as a formula in every case, we refer the reader to \S\ref{s:local} and in particular Theorem \ref{thm_andys_theorem}. Note that Theorem \ref{t:main2} implies that $e_f(L)$ equals 1 unless at least one of the integers 16 or 81 divides $N$. In order to read off $e_f(L)$ in these cases using Theorem \ref{t:main2}, it is necessary to  know only the local representations of $\GL_2(\Q_2)$ and $\GL_2(\Q_3)$  associated to $f$; this can be achieved using the algorithm presented in \cite{loeffler-weinstein}.

The proof of Theorem \ref{t:main2}  follows from a local computation, which is the heart of this paper. From the adelic viewpoint, the Fourier coefficient $a_f(n;\a)$ is equal to the value of the global Whittaker newform associated to $f$ at a certain adelic matrix. By the uniqueness of the Whittaker model, the global Whittaker newform factors as a product of local newforms. So we are reduced to solving the problem of ``extra vanishing" at matrices of level $l$ of the local Whittaker newform associated to an arbitary irreducible, admissible, generic representation of $\GL_2(\Q_p)$.  We do this in the more general context of an arbitrary non-archimedean local field of characteristic zero (this also means that Theorem \ref{t:main2} can be generalized in a straightforward manner to automorphic forms of $\GL_2$ over number fields, if one so chooses).  Our key tool is a certain ``basic identity" that was proved by the second author in \cite{saha-large-values} (this identity is obtained from the Jacquet--Langlands local functional equation via some elementary Fourier analysis over a finite abelian group). Ultimately, we are reduced to the problem of counting characters whose twists have certain prescribed conductors; this is done in  \S\ref{sec_proof_of_thm}.

Theorem \ref{t:main2} gives an exact formula for the \emph{minimum value} of $e_f(\a)$ taken over cusps $\a$ of a fixed denominator. Ideally we would like a formula for $e_f(\a)$ for each cusp $\a$. In general, such a refined result cannot be deduced from Theorem \ref{t:main2}; however, we now give a key case where this is possible.
\begin{proposition}\label{prop:rational}
Let $f$ be as in Theorem \ref{t:main2}, and let $\Q(f)$ be the (number) field\footnote{In fact, it is known that $\Q(f)$ is always a subfield of a CM field. Moreover, by strong multiplicity, it follows that for every positive integer $t$, $\Q(f)$ is generated by the quantities $a_f(n)$ where $n$ ranges over only the positive integers that are coprime to $t$.} generated by all the Fourier coefficients $a_f(n)$. Suppose for some divisor $L$ of $N$ we have $\Q(f) \cap \Q(e^{\frac{2 \pi i}{(L, N/L)}}) = \Q$ (where we think of all our number fields as subsets of the complex numbers). Then all cusps $\a$ of denominator $L$ have the same value of $e_f(\a)$; in other words, $e_f(\a) = e_f(L)$.
\end{proposition}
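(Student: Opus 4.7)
The plan is to exploit the transitive action of $\Gal(\overline{\Q}/\Q)$ on the set of cusps of $X_0(N)$ of a fixed denominator, combined with a Galois equivariance property of the Fourier expansion.

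Set $d=(L,N/L)$. The first step is to recall the classical description of this Galois action: the $\phi(d)$ cusps of denominator $L$ are all defined over the cyclotomic field $\Q(\zeta_d)$, and the Galois group $\Gal(\Q(\zeta_d)/\Q)\cong (\Z/d\Z)^\times$ acts on them simply transitively, with $\sigma_c$ (where $\sigma_c(\zeta_d)=\zeta_d^c$) sending the cusp $a/L$ to $ca/L$. Note that $\delta(\a)$ depends only on the denominator $L$, so the normalizations in \eqref{fourier2} agree across all cusps in this orbit and one may meaningfully compare the coefficients $a_f(n;\a)$ as $\a$ varies.

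The key second step is a Galois equivariance statement for Fourier coefficients. Since the Hecke relation $a_f(p^2)=a_f(p)^2-\chi(p)p^{k-1}$ at good primes $p$ shows $\Q(\chi)\subseteq\Q(f)$, any $\tau\in\Gal(\overline{\Q}/\Q)$ that fixes $\Q(f)$ pointwise also fixes $\chi$. For such $\tau$ one should have
\[ \tau\bigl(a_f(n;\a)\bigr) \;=\; \eta\cdot a_f\bigl(n;\tau(\a)\bigr) \]
for some root of unity $\eta$, where $\eta$ merely records the ambiguity in the choice of matrices $\sigma,\sigma'\in\SL_2(\Z)$ sending $\a$ and $\tau(\a)$ to $\infty$. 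There are two natural routes to establish this: algebraically, by regarding $f$ as a section, defined over $\Q(f)$, of an automorphic line bundle on the canonical $\Q$-model of $X_0(N)$, so that $q$-expansion at any $\overline{\Q}$-cusp intertwines the Galois action on cusps with the Galois action on coefficients; or adelically, by writing $a_f(n;\a)$ as an explicit value of the global Whittaker newform attached to $\pi$ and tracking Galois through the local--global factorization.

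Finally, the hypothesis $\Q(f)\cap\Q(\zeta_d)=\Q$ is precisely the statement that $\Q(f)$ and $\Q(\zeta_d)$ are linearly disjoint over $\Q$, which implies that the restriction map $\Gal(\Q(f)(\zeta_d)/\Q(f))\to\Gal(\Q(\zeta_d)/\Q)=(\Z/d\Z)^\times$ is an isomorphism. Hence every $\sigma_c$ lifts to an element $\tau_c\in\Gal(\overline{\Q}/\Q)$ that is trivial on $\Q(f)$; applying the equivariance of the previous step to $\tau=\tau_c$ yields $a_f(n;\a)=0\iff a_f(n;\sigma_c(\a))=0$. Since the cusps $\sigma_c(\a)$ exhaust every cusp of denominator $L$ as $c$ ranges over $(\Z/d\Z)^\times$, we conclude $e_f(\a)=e_f(L)$ for every such cusp. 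The main obstacle is the Galois equivariance statement of the middle paragraph: the vanishing conclusion is insensitive to the root-of-unity factor $\eta$, but making the equivariance rigorous requires a careful choice of uniformizers at the two cusps, and I expect the adelic route to be cleanest since the Fourier coefficients at cusps already appear in this paper as values of the global Whittaker newform.
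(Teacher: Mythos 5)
Your overall strategy---lift each element of $(\Z/d\Z)^\times$, $d=(L,N/L)$, to a $\tau\in\Gal(\overline{\Q}/\Q)$ that is trivial on $\Q(f)$ (possible exactly because $\Q(f)\cap\Q(e^{2\pi i/d})=\Q$), and use a Galois equivariance of Fourier expansions to move the vanishing question around the orbit of cusps of denominator $L$---is sound, and it is genuinely different from the paper's argument. It is in fact the route the paper explicitly acknowledges as an alternative for $k=2$ (Lemma 1.3 of Brunault's paper). However, essentially all of the content of the proposition is concentrated in your middle step, the equivariance $\tau(a_f(n;\a))=\eta\cdot a_f(n;\tau(\a))$, which you assert rather than establish; for general weight and nontrivial nebentypus the ``algebraic'' route requires the $q$-expansion principle on $X_1(N)$ rather than $X_0(N)$ (since $f$ is not a section of a line bundle on $X_0(N)$ when $\chi\neq 1$), together with a comparison of the finer Galois orbit structure of the cusps of $X_1(N)$ with that of $X_0(N)$, and this is not automatic. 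The paper instead proves the statement purely locally: Proposition \ref{propformulafourier} expresses $a_f(r;\a)$, up to nonzero factors, as $\prod_{p\mid N}W_{\pi_p}(g_{r_p-d_{\pi_p}(l_p),\,l_p,\,u_p})$, where changing the cusp $a/L$ within its denominator class changes only the units $u_p$ modulo $U_{\min\{l_p,n_p-l_p\}}$; Proposition \ref{prop:localrational} then shows, via the $\sigma$-linear isomorphism $W\mapsto\sigma(W(m_\sigma\,\cdot\,))$ from $\Wh(\pi_p,\psi_p)$ to $\Wh({}^\sigma\pi_p,\psi_p)$ and the identification of $t_\sigma$ with the cyclotomic character, that under $\Q(\pi_p)\cap\Q(\mu_{p^{\min\{l_p,n_p-l_p\}}})=\Q$ the nonvanishing of $W_{\pi_p}(g_{t,l_p,v})$ is independent of $v\in\Zpx$. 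This is precisely the local incarnation of the equivariance you need, so your preferred ``adelic route'' for the missing step collapses onto the paper's actual proof.

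One concrete payoff of the local route that your argument would not recover: since only the primes with $p^2\mid N$ contribute (for $p^2\nmid N$ one has $\min\{l_p,n_p-l_p\}\le 1$ and Proposition \ref{basic2prop} applies), the hypothesis can be weakened from $\Q(f)$ to the compositum $K_N$ of the local fields of rationality $\Q(\pi'_p)$ over those primes (Proposition \ref{prop:globalrational}). Your global argument requires $\tau$ to be trivial on all of $\Q(f)$ and so only yields the weaker statement.
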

If the modular form $f$ comes from an elliptic curve, then we have $\Q(f) = \Q$; therefore the condition $\Q(f) \cap \Q(e^{\frac{2 \pi i}{(L, N/L)}}) = \Q$ is trivially satisfied. In this case, Theorem \ref{t:main2} gives an exact formula for $e_f(\a) = e_\varphi(\a)$ which is precisely what is stated in Theorem \ref{t:main}. Several of the cases of $e_{\pi_p}(l_p)>0$ that are described in Theorem \ref{t:main2} do not appear in Theorem \ref{t:main}. This is a reflection of the fact that the modular forms $f$ associated to elliptic curves are in some sense special.

Proposition \ref{prop:rational} is a reflection of the fact that there exists an action of $\Aut(\C)$ on the set of cohomological automorphic representations of $\GL_2$, which factors into a product of local actions and is compatible with the classical action of $\Aut(\C)$ on Fourier coefficients.  The local action can be studied via local Whittaker newforms, and the condition in Proposition \ref{prop:rational} ensures that this action is transitive on the cusps of a given denominator $L$. For the details, see \S\ref{s:localrational} and \S\ref{s:globalrational}.
In fact, Proposition \ref{prop:rational} remains true when $\Q(f)$ is replaced by the the compositum of the fields of rationality of the local representations $\pi_p$  over the (finitely many) primes $p$ with $p^2|N$ (see Proposition \ref{prop:globalrational}). When $k=2$, Proposition \ref{prop:rational}  also follows from Lemma 1.3 of \cite{brunault}; this alternate method, however, does not give the stronger result described in the previous sentence.

We end this introduction with a few further remarks about the condition in Proposition \ref{prop:rational}. Let $N_0$ denote the largest integer whose square divides $N$. If it is true that
\begin{equation}\label{e:rationality}
\Q(f) \cap \Q(e^{\frac{2 \pi i}{N_0}}) = \Q,
\end{equation}
then the condition in Proposition \ref{prop:rational} is satisfied for all $L|N$. So in this case, $e_f(\a) = e_f(L)$ for all cusps $\a$ of denominator $L$, and therefore Theorem \ref{t:main2} gives an exact formula for $e_f(\a)$. While we are unaware of any results describing how often a form $f$ satisfies the rationality condition \eqref{e:rationality}, a perusal of the LMFDB database makes it clear that this condition is indeed satisfied the vast majority of the time\footnote{However, we believe that this condition is never satisfied when a high power of an odd prime divides $N$.} for $1 \le N \le 100$, $2\le k \le 12$. An interesting low weight case where this condition is not satisfied occurs when $k=2, M=1$, and $N=567 = 3^4 \times 7.$ For this data, there exists a form $f$ such that $\Q(f)$ is the maximal totally real subfield of $\Q(e^{2 \pi i/9})$, and for which numerical experiments performed by Brunault strongly indicate that $e_f(1/9) = e_f(2/9) = 3$ but $e_f(4/9) = 6$.\footnote{This example and a few others were discovered by Fran\c{c}ois Brunault [personal communication, July 2012].} The above example shows that the condition in Proposition \ref{prop:rational} is indeed necessary. Another example, with $N=625$, is given in Remark 5.1 of \cite{brunault}.  More generally, it was shown by Fran\c{c}ois Brunault and Paul Nelson (personal communication, July 2012) that if $p\ge 5$ is a prime and $f$ is a newform with $M=1$, $N=p^4$, such that the local component $\pi_p$ is a principal series representation, then we have that $e_f(\frac{a}{p^2}) =1$ for only about half the values of $a$, and $e_f(\frac{a}{p^2}) >1$ for the remaining half! This follows from the automatic vanishing of certain exponential sums modulo $p^2$. For any $f$ as above, the corresponding local field of rationality $\Q(\pi_p)$ (which is contained in $\Q(f)$) intersects non-trivially the cyclotomic field $\Q(e^{2 \pi i/p})$.

The above examples make it clear that when the rationality condition \eqref{e:rationality} is not satisfied, the problem of computing $e_f(\a)$ for individual cusps $\a$  is a subtle one. In fact, one can show that this problem is equivalent to understanding the vanishing of certain $p$-adic analogues of hypergeometric functions. Further investigation of these functions from an analytic point of view will be done in forthcoming work of the second author with Yueke Hu.

\subsection*{Notations}

We collect here some general notations that will be used throughout this paper. Additional notations will be defined where they first appear in the paper.

Given two integers $a$ and $b$, we use $a|b$ to denote that $a$ divides $b$, and we use $a|b^\infty$ to denote that $a|b^n$ for some positive integer $n$.  We let $v_a(b)$ denote the largest non-negative integer such that $a^{v_a(b)} | b$. We will occasionally use the shorthand notation $e(x) = e^{2 \pi i x}$.

The group $\Gamma_0(N)$ consists of those matrices $\left(\begin{smallmatrix}a&b\\c&d
\end{smallmatrix}\right) \in \SL_2(\Z)$ such that $N\mid c$. The subgroup $\Gamma_1(N)$ consists of those matrices in $\Gamma_0(N)$ with the added property that $a \equiv d \equiv 1 \mod{N}$.
Let $\H$ denote the upper half plane and $\GL_2(\R)^+$ the group of real two-by-two matrices with positive determinant. For $z \in \H$, $\left(\begin{smallmatrix}a&b\\c&d
\end{smallmatrix}\right) \in \GL_2(\R)^+$, we let $\left(\begin{smallmatrix}a&b\\c&d
\end{smallmatrix}\right)z=\frac{az+b}{cz+d} \in \H$ be the point obtained by M\"obius transformation. Let $X_0(N)$ denote the usual modular curve obtained by the compactification of $\Gamma_0(N)\bs \H$. Given a function $f$ on $\H$, an integer $k$, and some $\gamma = \left(\begin{smallmatrix}a&b\\c&d
\end{smallmatrix}\right) \in \GL_2(\R)^+$, we define a function $f |_k \gamma$ on $\H$ via $(f |_k \gamma)(z) = \det(\gamma)^{k/2} (cz+d)^{-k} f(\gamma z).$

We shall always assume every character  is continuous (but not necessarily unitary). For a complex representation $\pi$ of some group $H$ and an automorphism $\sigma$  of $\C$, there is a complex representation ${}^\sigma \pi$ of $H$ defined as follows. Let $V$ be the space of $\pi$ and let $V'$ be any vector space such that $t: V \rightarrow V'$ is a $\sigma$-linear isomorphism (that is, $t(v_1 +v_2) = t(v_1) + t(v_2)$ and $t(\lambda v) = \sigma(\lambda) t(v)$). We define the representation $( \leftexp{\sigma}{\pi}, V')$ via $\leftexp{\sigma}{\pi}(g) = t \circ \pi(g) \circ t^{-1}$. It can be shown easily that the representation $\leftexp{\sigma}{\pi}$ does not depend on the choice of $V'$ or $t$. We define $\Q(\pi)$ to be the fixed field of the set of all automorphisms $\sigma$ such that $\leftexp{\sigma}{\pi} \simeq \pi$.

\section{Local computations}\label{s:local}

\subsection{Notations and background}\label{sec_notation}

\subsubsection{Notations for local fields}

Let $F$ be a non-archimedean local field of characteristic zero and let $G=\GL_{2}(F)$. We denote by $\Oi$ the ring of integers of $F$ and denote by $\p$ the maximal ideal of $\OF$. We fix a uniformiser, that is a generator of $\p$, and denote it by $\varpi$; we let $q=\# (\Oi/\p)$. Let $\abs{\,\cdot\,}$ be the absolute value on $F$, normalised so that $\abs{\varpi}=q^{-1}$, and
$v$ the valuation on $F$ defined via $|x| = q^{-v(x)}$. The subgroups $U_k$ of $\OF^\times$ are defined as follows: $U_{k}=1+\varpi^{k}\Oi$ for $k>0$, and $U_{0}=\Oix$. Let $dy$ be the Haar measure on $F$, normalised so that $\Vol(\Oi,dy)=1$, and $\dxy$ the Haar measure on $\Fx$, normalised so that $\Vol(\Oix,\dxy)=1$. Finally, $\zeta(s)=(1-q^{-s})^{-1}$ denotes the (local) zeta-function of $F$.

\begin{comment}the measure $dy$ satisfies $d(ay)=\abs{a}dy$ for any $a\in\Fx$. For this reason $\abs{y}^{-1}dy$ is a Haar measure on $\Fx$ and our normalisation implies

\begin{equation*}
\dxy=\zeta(1)\abs{y}^{-1}dy.
\end{equation*}
Moreover, $\Vol(\varpi^{k}\Oi,dy)=  (\#(\Oi/\varpi^{k}\Oi))^{-1}= q^{-k}$ and, since $\# B_{0}/U_k = (q-1)q^{k-1}$,

\begin{equation*}
\Vol(U_{k},\dxy)= q^{-k}\zeta(1).
\end{equation*}
\end{comment}

\subsubsection{Characters of $\Oix$}

For a character $\chi\colon \Fx\rightarrow\Cx$ we denote by $a(\chi)$ (the exponent of) its \textit{conductor}; this is the smallest integer $k\geq 0$ such that $\chi(U_k)=\{1\}$. We say $\chi$ is \textit{unramified} if $a(\chi)=0$. Let
\begin{equation*}
\Xc=\left\lbrace\,\mu\colon\Fx\rightarrow\Cx \,:\, \mu(\varpi)=1\, \right\rbrace
\end{equation*}
so that $\Xc$ is isomorphic to the group of continuous characters on $\Oix$. Any character of $\Xc$ is unitary and of finite order. We also consider characters in $\Xc$ of particular conductors, duly introducing the notation:
\begin{equation*}
\Xc_{k}=\left\lbrace\,\mu\in\Xc\,:\,a(\mu)\leq k\,\right\rbrace\andy\Xc_{k}'=\left\lbrace\,\mu\in\Xc\,:\,a(\mu)= k\,\right\rbrace.
\end{equation*}
Note that $\{1\}=\Xc_{0}\subset \Xc_{1}\subset\cdots \subset\Xc_{k}\subset\cdots \subset\Xc$ as subgroups. We have for each $k \ge 1$, $\#\Xc_{k}=q^{k-1}(q-1)$, $\#\Xc_{1}' =  q-2$, and for $k\geq 2$, $\#\Xc_{k}' =q^{k-2}(q-1)^{2}$. Furthermore, for all $l \ge k \ge l/2 \ge 1$, we have $\Xc_l / \Xc_k \cong U_k/U_l \cong \OF/\p^{l-k}.$

We now answer a  question which will frequently sprout up in our computations.

\begin{lemma}\label{lem_monkey}
Let $k\geq 2$ and let $\chi$ be a character of $\Fx$ with conductor $a(\chi)=k$.

\begin{enumerate}

\item Then there exists a character $\mu\in\Xc$ such that $a(\mu)=a(\mu\chi)=k$ if and only if $q>2$.

\item   Let $q=2$.  If $k>2$, then there exists a character $\mu\in\Xc$ such that $a(\mu)=k$ and $a(\mu\chi)=k-1$. If $k=2$, then for any $\mu\in\Xc$ satisfying $a(\mu)=2$, we have $a(\mu\chi)=0$.
\end{enumerate}

\end{lemma}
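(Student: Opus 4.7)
The plan is to reduce both parts to a counting problem inside the finite group $\Xc_k$. The key preliminary observation is that $a(\mu\chi)$ depends only on the restriction of $\mu\chi$ to $\Oix$. So let $\chi_0 \in \Xc$ be the unique character with $\chi_0|_{\Oix} = \chi|_{\Oix}$ and $\chi_0(\varpi)=1$; then $\chi_0 \in \Xc_k'$, and we must find $\mu \in \Xc$ of the required conductor such that $\mu\chi_0$ has the required conductor. The other basic tool is the standard fact that if $\alpha, \beta$ are characters with $a(\alpha) \neq a(\beta)$, then $a(\alpha\beta)=\max(a(\alpha),a(\beta))$; in particular, if $\nu \in \Xc_{k-1}$, then $a(\nu\chi_0)=k$.

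For part (1), I would count $A := \#\{\mu \in \Xc_k' : \mu\chi_0 \in \Xc_k'\}$ by inclusion–exclusion applied to the decomposition $\Xc_k' = \Xc_k \setminus \Xc_{k-1}$ in both variables. Since multiplication by $\chi_0$ is a bijection of $\Xc_k$ onto itself, the preimage of $\Xc_{k-1}$ under $\mu \mapsto \mu\chi_0$ has exactly $\#\Xc_{k-1}$ elements. The basic fact above shows that $\{\mu \in \Xc_{k-1} : \mu\chi_0 \in \Xc_{k-1}\}$ is empty. Combining these gives $A = \#\Xc_k - 2\#\Xc_{k-1} = q^{k-2}(q-1)(q-2)$, which is positive precisely when $q > 2$.

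For part (2) in the case $k=2$, $q=2$, the counting formulas give $\#\Xc_2 = 2$ and $\#\Xc_1 = 1$, so $\Xc_2'$ is a singleton, and its unique element must be $\chi_0$ itself. Therefore any $\mu$ with $a(\mu)=2$ satisfies $\mu=\chi_0$, whence $\mu\chi = \chi_0^2$ on $\Oix$, which is trivial (since $\Xc_2$ is a group of order $2$), giving $a(\mu\chi)=0$.

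For part (2) with $k > 2$ and $q = 2$, I would count $B := \#\{\mu \in \Xc_k' : \mu\chi_0 \in \Xc_{k-1}'\}$. The basic fact guarantees that any $\mu$ with $\mu\chi_0 \in \Xc_{k-1}$ automatically has $a(\mu)=k$, so one only needs to count $\mu\chi_0 \in \Xc_{k-1} \setminus \Xc_{k-2}$. Using the bijection of $\Xc_k$ under multiplication by $\chi_0$, this count is $\#\Xc_{k-1} - \#\Xc_{k-2} = 2^{k-2}-2^{k-3} = 2^{k-3} \geq 1$, proving existence. The only real subtlety is bookkeeping the conductor comparisons cleanly; once that is in place, the three sub-statements drop out of the same counting framework.
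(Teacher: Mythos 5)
Your proof is correct and takes essentially the same route as the paper's: both arguments reduce everything to the structure of the finite groups $\Xc_k$ together with the fact that $a(\alpha\beta)=\max\{a(\alpha),a(\beta)\}$ whenever $a(\alpha)\neq a(\beta)$. You phrase the paper's coset-avoidance argument in $\Xc_k/\Xc_{k-1}$ as an explicit inclusion--exclusion count, but the content is identical.
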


\begin{proof}
For the first assertion, asking for such a $\mu$ is equivalent to demanding $\mu\not\equiv 1$ and $\mu\chi\not\equiv 1$ in the group $\Xc_{k}/\Xc_{k-1}$ (where we abuse notation by writing $\chi$ for the restriction $\chi\vert_{\Oix}$). Since the group $\Xc_{k}/\Xc_{k-1}$ has order $q$, and we need to avoid $\mu\equiv 1$ and $\mu\equiv \chi^{-1}$, there always exists such a $\mu$ whenever $q-2>0$.

Now let $q=2$. If $k>2$, then, as $\#(\Xc_{k}/\Xc_{k-1})=2$ but $\#(\Xc_{k}/\Xc_{k-2})=4$, we can always find a $\mu$ with $a(\mu)=k$ and $a(\mu\chi) = k-1$. On the other hand, if $k=2$, then $\# \Xc_{2}= 2$, the class of these characters being those of $1$ and $\chi$, and the class of $\chi^{2}$ is therefore trivial.
\end{proof}

We shall also require an extension of Lemma \ref{lem_monkey} for two characters $\chi_1$, $\chi_2$ of the same conductor.

\begin{lemma}\label{lem_monkeys}
Let $k\geq 2$ and let $\chi_1, \chi_2$ be characters of $\Fx$ such that $a(\chi_1)=a(\chi_2)=k$.
\begin{enumerate}

\item Suppose that either a) $q>3$ or b) $q=3$ and $a(\chi_1 \chi_2^{-1}) <k$. Then there exists a character $\mu\in\Xc$ such that $a(\mu)=a(\mu\chi_1)=a(\mu\chi_2)=k$.
\item Suppose that $q=3$ and $a(\chi_1 \chi_2^{-1}) =k$. Then $\max\{a(\mu\chi_1)+ a(\mu\chi_2): \mu\in\Xc'_k\} = 2k-1$.
\item  Suppose that $q=2$ and  $a(\chi_1 \chi_2^{-1}) <k-1$.  If $k>2$, then there exists a character $\mu\in\Xc$ such that $a(\mu)=k$ and $a(\mu\chi_1)=a(\mu\chi_2) =k-1$. If $k=2$, then for any $\mu\in\Xc$ satisfying $a(\mu)=2$, we have $a(\mu\chi_1)=a(\mu\chi_2)=0$.
\item Suppose that $q=2$ and  $a(\chi_1 \chi_2^{-1}) =k-1$.  Then automatically $k\ge 3.$ If $k \ge 4$, then $\max\{a(\mu\chi_1)+ a(\mu\chi_2): \mu\in\Xc'_k\} = 2k-3$. If $k=3$, then for any $\mu\in\Xc$ satisfying $a(\mu)=3$, we have  $\{a(\mu\chi_1), a(\mu\chi_2)\} = \{2,0\}$.

\end{enumerate}
\end{lemma}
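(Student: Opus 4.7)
The plan is to build directly on Lemma \ref{lem_monkey} by analysing characters in the quotient $\Xc_k/\Xc_{k-1}$ (of order $q$), and when needed in the finer quotient $\Xc_{k-1}/\Xc_{k-2}$. The guiding principle is that $a(\mu\chi_i)=k$ if and only if $\mu\not\equiv\chi_i^{-1}\pmod{\Xc_{k-1}}$, so each conductor condition translates into the avoidance of one class in $\Xc_k/\Xc_{k-1}$.

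For part (1) I would search for $\mu\in\Xc_k/\Xc_{k-1}$ avoiding $\{1,\chi_1^{-1},\chi_2^{-1}\}$. If $q>3$, this leaves at least $q-3>0$ admissible classes. If $q=3$ and $a(\chi_1\chi_2^{-1})<k$, then $\chi_1^{-1}\equiv\chi_2^{-1}$ in the quotient, so only two classes are forbidden and exactly one remains. Part (2) corresponds to $q=3$ with $a(\chi_1\chi_2^{-1})=k$: now $\{1,\chi_1^{-1},\chi_2^{-1}\}$ are three distinct classes exhausting $\Xc_k/\Xc_{k-1}$, forcing any $\mu\in\Xc'_k$ to lie in the class of $\chi_1^{-1}$ or $\chi_2^{-1}$. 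Taking $\mu\equiv\chi_1^{-1}$ without loss of generality, one has $\mu\chi_1\in\Xc_{k-1}$ and $\mu\chi_2\equiv\chi_1^{-1}\chi_2$ of conductor $k$, whence $a(\mu\chi_1)+a(\mu\chi_2)\le 2k-1$; to attain this bound I would further select $\mu$ within its coset so that $\mu\chi_1\in\Xc'_{k-1}$, which is possible since $\Xc'_{k-1}\ne\emptyset$ when $q=3$.

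For the $q=2$ parts, $\Xc_k/\Xc_{k-1}$ has order $2$ and both $\chi_i^{-1}$ represent its non-trivial class, so automatically $\mu\chi_1,\mu\chi_2\in\Xc_{k-1}$ for every $\mu\in\Xc'_k$, and the identity $\mu\chi_2=(\mu\chi_1)(\chi_1^{-1}\chi_2)$ inside $\Xc_{k-1}$ becomes decisive. In case (3), $\chi_1^{-1}\chi_2\in\Xc_{k-2}$, so $\mu\chi_1$ and $\mu\chi_2$ have equal conductor; for $k>2$ any $\mu$ supplied by Lemma \ref{lem_monkey}(2) applied to $\chi_1$ works, and for $k=2$ the identity $\Xc_1=\{1\}$ pins down $\mu=\chi_1^{-1}$ and forces both products to vanish.

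In case (4), $\chi_1^{-1}\chi_2$ has conductor $k-1$ and thus represents the non-trivial class of $\Xc_{k-1}/\Xc_{k-2}$, whence exactly one of $\mu\chi_1,\mu\chi_2$ has conductor $k-1$ and the other has conductor at most $k-2$; this gives the upper bound $2k-3$. The constraint $k\ge 3$ follows from $\Xc'_1=\emptyset$ for $q=2$, and the case $k=3$ reduces to a direct check again using $\Xc_1=\{1\}$. I expect the main obstacle to be showing attainment of the bound $2k-3$ for $k\ge 4$: this requires producing $\mu\in\Xc'_k$ with both $\mu\chi_1\in\Xc'_{k-1}$ and $(\mu\chi_1)(\chi_1^{-1}\chi_2)\in\Xc'_{k-2}$ simultaneously. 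Setting $\alpha=\mu\chi_1$ and $\beta=\chi_1^{-1}\chi_2$, the admissible set is $\Xc'_{k-1}\setminus\beta^{-1}\Xc_{k-3}$, which has cardinality $|\Xc'_{k-1}|-|\Xc_{k-3}|=2^{k-3}-2^{k-4}=2^{k-4}>0$ for $k\ge 4$; since $\mu\mapsto\alpha=\mu\chi_1$ is a bijection of $\Xc'_k$ onto $\Xc_{k-1}$, such a $\mu$ indeed exists.
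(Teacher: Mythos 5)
Your proof is correct and follows essentially the same route as the paper's: each conductor condition is translated into the avoidance of classes in $\Xc_k/\Xc_{k-1}$ (and, for the attainment claims, a finer count in $\Xc_{k-1}/\Xc_{k-2}$), exactly as in the proof of Lemma \ref{lem_monkey}. The paper dispatches cases (3) and (4) with a ``mutatis mutandis''; your explicit factorisation $\mu\chi_2=(\mu\chi_1)(\chi_1^{-1}\chi_2)$ and the cardinality count $\#\bigl(\Xc'_{k-1}\setminus\beta^{-1}\Xc_{k-3}\bigr)=2^{k-4}>0$ correctly fill in that omitted detail.
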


\begin{proof}

This follows the same routine as the proof of Lemma \ref{lem_monkey}. In case (1) we use a) $\#(\Xc_{k}/\Xc_{k-1})=q>3$, so that there is at least a fourth class inequivalent to $1$, $\chi_{1}^{-1}$, or $\chi_{2}^{-1}$; in b) we allow $q=3$ but force $\chi_{1}\equiv \chi_{2} \mod \Xc_{k-1}$ so $\mu$ should be chosen outside just two classes and $\#(\Xc_{k}/\Xc_{k-1})>2$.

In case (2) we have $q=3$ and $\chi_{1}\not\equiv\chi_{2} \mod \Xc_{k-1}$. Necessarily, $\mu$ is equivalent to precisely one of $\chi_{1}^{-1}$ or $\chi_{2}^{-1}$ in $\#(\Xc_{k}/\Xc_{k-1})$. But there are always at least five non-trivial classes (in the worst case $q=3$ and $k=2$) in $\#(\Xc_{k}/\Xc_{k-2})$ so we can find $\mu$ such that $\{a(\mu\chi_1), a(\mu\chi_2)\} = \{k, k-1\}$. In cases (3) and (4) we apply this reasoning, mutatis mutandis, for $q=2$.
\end{proof}

\subsubsection{The Gauss sum}

 We fix once and for all an additive character $\psi\colon F/\Oi\rightarrow \Cx$  on $F$ such that $\psi$ is trivial on $\OF$ but non-trivial on $\p^{-1}$. For each $a\in\Fx$ and $\mu\in\Xc$ define the \textit{Gauss sum}:
\begin{equation*}
\Ga(a,\mu)=\int_{\Oix}\psi(ay)\,\mu(y)\,\dxy.
\end{equation*}
\begin{lemma}\label{lem_pine}
Let $v\in\Oix$, $r\in\Z$ and $\mu\in\Xc$.
Then
\begin{equation}\label{eq1lem}
\Ga(v\varpi^{-r},1)= \begin{cases}1 & \text{ if }  r \le 0, \\ -\zeta(1)q^{-1} &\text{ if }  r =1, \\ 0 & \text{otherwise.} \end{cases}
\end{equation}

 If $a(\mu)>0$, then
 \begin{equation}\label{e:eq2}
 \Ga(v\varpi^{-r},\mu)=
 \begin{cases}
 \zeta(1)q^{-r/2}\eps(1/2, \mu^{-1}, \psi)\mu^{-1}(v)  &\text{ if } r=a(\mu), \\ 0  &\text{ otherwise. }
\end{cases}
\end{equation}
Above, $\eps(1/2, \mu^{-1}, \psi)$ is the usual $\GL(1)$-epsilon factor \(or root number\) associated to the character $\mu^{-1}$ and the additive character $\psi$; in particular $|\eps(1/2, \mu^{-1}, \psi)|=1$.

\end{lemma}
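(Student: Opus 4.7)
The plan is to reduce both claims to the case $v=1$ by means of the substitution $y \mapsto v^{-1}y$ in the defining integral. Using invariance of $d^{\times}y$ and $\mu(v^{-1}y) = \mu^{-1}(v)\mu(y)$, this substitution immediately yields $\Ga(v\varpi^{-r},\mu) = \mu^{-1}(v)\,\Ga(\varpi^{-r},\mu)$, which already accounts for the $\mu^{-1}(v)$ appearing in \eqref{e:eq2} and for the independence from $v$ in \eqref{eq1lem}.

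For the unramified claim \eqref{eq1lem}, since $|y|=1$ on $\Oix$ we have $d^{\times}y = \zeta(1)\,dy$ there, and writing $\int_{\Oix} = \int_{\Oi} - \int_{\p}$ reduces the computation to two additive integrals of $\psi(\varpi^{-r}y)$. Each of these is the integral of an additive character over a compact subgroup and thus equals the volume of that subgroup when the character is trivial on it, and vanishes otherwise. Since $\psi$ is trivial precisely on $\OF$, we get $\int_{\Oi}\psi(\varpi^{-r}y)dy = \mathbf{1}_{r\le 0}$ and $\int_{\p}\psi(\varpi^{-r}y)dy = q^{-1}\mathbf{1}_{r\le 1}$. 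Combining the two, together with the identity $\zeta(1)(1-q^{-1})=1$, produces the three cases of \eqref{eq1lem}.

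For the ramified claim \eqref{e:eq2} with $k=a(\mu)\ge 1$, the key trick is the substitution $y \mapsto y(1+z)$ for $1+z \in \Oix$, which yields
\[
\Ga(\varpi^{-r},\mu) \;=\; \mu(1+z)\int_{\Oix}\psi(\varpi^{-r}y)\,\psi(\varpi^{-r}yz)\,\mu(y)\,d^{\times}y.
\]
When $r<k$, take $z\in\p^{k-1}$; then $\psi(\varpi^{-r}yz)=1$ since $\varpi^{-r}yz\in\Oi$, and the identity becomes $\Ga = \mu(1+z)\,\Ga$. Because $a(\mu)=k$, there exists $z\in\p^{k-1}$ with $\mu(1+z)\neq 1$, forcing $\Ga=0$. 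When $r>k$, take $z\in\p^k$; then $\mu(1+z)=1$, and averaging the resulting identity over $z$ running through the finite group $\p^k/\p^r$ converts the $\psi(\varpi^{-r}yz)$ factor into the inner character sum $\sum_{w\in\Oi/\p^{r-k}}\psi(\varpi^{-(r-k)}yw)$. For $y\in\Oix$ and $r-k\ge 1$ this additive character of $\Oi/\p^{r-k}$ is non-trivial (it would be trivial only if $y\in\p^{r-k}$), so the sum vanishes and again $\Ga=0$.

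The surviving case $r=k$ is the non-vanishing one. Here the integral reduces to a finite classical Gauss-type sum over $(\Oi/\p^k)^{\times}$, and the identification with $\zeta(1)\,q^{-k/2}\varepsilon(1/2,\mu^{-1},\psi)$ is simply the standard definition of the local $\GL(1)$ root number (as in Tate's thesis or Bushnell--Henniart), once one tracks the effect of the measure normalisation $\Vol(\Oix,d^{\times}y)=1$, which contributes the $\zeta(1)$ prefactor. The unitarity of $\varepsilon(1/2,\mu^{-1},\psi)$ is standard. The only real subtlety in this plan is bookkeeping: the direction of the $\mu^{-1}$ twist on $v$, and the precise measure factors entering the Gauss sum at $r=k$; the vanishing steps themselves are elementary applications of orthogonality of characters.
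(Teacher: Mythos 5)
Your proof is correct. For \eqref{eq1lem} it is identical to the paper's argument: write $d^{\times}y=\zeta(1)\,dy$ on $\Oix$, split $\int_{\Oix}=\int_{\Oi}-\int_{\p}$, and apply orthogonality of additive characters. For \eqref{e:eq2} you differ only in that you prove the vanishing for $r\neq a(\mu)$ directly, via the substitution $y\mapsto y(1+z)$ (nontriviality of $\mu$ on $U_{k-1}$ when $r<k$, and averaging over $z\in\p^{k}/\p^{r}$ to produce a vanishing additive character sum when $r>k$), whereas the paper simply cites Lemma 7--4 of the reference [ramval]; your argument is a standard and correct unpacking of that citation, so this buys self-containedness at no cost. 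In the surviving case $r=a(\mu)$ both you and the paper reduce to a finite Gauss sum over $(\Oi/\p^{k})^{\times}$ and appeal to the standard identification with the $\GL(1)$ root number (the paper cites equation (7.6) of [ramval], where the relevant constant is denoted $W(\mu^{-1})$); since the lemma itself defines $\eps(1/2,\mu^{-1},\psi)$ only as ``the usual epsilon factor,'' your deferral there is appropriate, though if you wanted a fully self-contained treatment you would still need to pin down the convention that makes the twist come out as $\mu^{-1}$ rather than $\mu$.
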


\begin{proof}
We first prove \eqref{eq1lem}. By our normalisation of measures we have $$\Ga(v\varpi^{-r},1)= \zeta(1)\left(\int_{\OF} \psi(v \varpi^{-r}y) dy -  q^{-1}\int_{\OF} \psi(v \varpi^{-r+1}y) dy \right).$$ The result \eqref{eq1lem} now follows immediately from the orthogonality of additive characters and the fact that $\psi$ is trivial on $\OF$ but not on $\p^{-1}$. Next we prove \eqref{e:eq2}. The vanishing of the Gauss sum when $a(\mu) \neq r$ follows from \cite[Lemma 7-4]{ramval}. On the other hand, when $a(\mu)=r$ we get using $U_r$-invariance $$\Ga(v\varpi^{-r},\mu)= \mu^{-1}(v)\vol(U_r) \sum_{U_r \bs \OF^\times} \psi(\varpi^{-r} y) \mu(y).$$ The result now follows from \cite[(7.6)]{ramval} (where $\eps(1/2, \mu^{-1}, \psi)$ is denoted $W(\mu^{-1})$).
\end{proof}

\subsubsection{Definitions of matrix groups}

Let $K=\GL_{2}(\Oi)$ and for an integer $n\geq 0$ let

\begin{equation}\label{e:defk1}
K_{1}(n)=\left\lbrace\,\begin{pmatrix} a&b\\c&d\end{pmatrix}\in K\ :\ c\in \varpi^{n}\Oi,\ a\in U_n\,\right\rbrace, \quad K'_{1}(n)=\left\lbrace\,\begin{pmatrix} a&b\\c&d\end{pmatrix}\in K\ :\ c\in \varpi^{n}\Oi,\ d\in U_n\,\right\rbrace.
\end{equation}
Define the matrices
$
a(y)=\begin{pmatrix}
y&\\&1
\end{pmatrix},$ $
w=\begin{pmatrix}
&1\\-1&
\end{pmatrix},$ $
n(x)=\begin{pmatrix}
1&x\\&1
\end{pmatrix}
$
for each $y\in\Fx$ and $x\in F$. Define subgroups
$$N =
\{n(x):  x\in F \}, \quad
A = \{a(y): y\in F^\times \}, \quad
Z =\{ z(t):
t \in F^\times \}$$ of $G$. For any $t,l\in\Z$ and $v\in\Oix$ define
\begin{equation*}
g_{t,l,v}=a(\varpi^{t})wn(v\varpi^{-l})= \begin{pmatrix}
&\varpi^{t}\\
-1&-v\varpi^{-l}
\end{pmatrix}.
\end{equation*}
\begin{remark} Suppose that $n$ is fixed. Then, for each $g \in G$ there is a unique integer $l$ satisfying $0 \le l \le n$ such that $g \in ZNg_{t,l,v}K_1(n)$ for some $t \in \Z$, $v \in \OF^\times$ \(see Lemma 2.13 of \cite{saha-large-values}\).
\end{remark}

\subsubsection{Notation for representations of $G$}

For an irreducible, admissible, generic representation $\pi$ of $G$ we define (the exponent of) its \textit{conductor} $a(\pi)$ to be the smallest integer $n\geq 0$ such that the space of $K_{1}(n)$-fixed vectors in $\pi$ contains a non-zero vector. It is well known that the space of $K_{1}(a(\pi))$-fixed vectors is one-dimensional. If $a(\pi)=0$, then $\pi$ is said to be \textit{unramified}.

For a character $\chi$ of $\Fx$ we write the character twist of $\pi$ as $\chi\pi$ which is defined to be the representation of $G$ given by $g\mapsto \chi(\det(g))\pi(g)$. The central character of $\pi$ shall be denoted $\omega_{\pi}$ and the representation contragredient to $\pi$ is denoted $\tilde{\pi}$; as $G=\GL_{2}(F)$ we can realise $\tilde{\pi}=\omega_{\pi}^{-1}\pi$.

For two characters $\chi_{1},\chi_{2}$ of $\Fx$, let $\chi_{1}\boxplus \chi_{2}$ denote the (normalised) principal series representation of $G$ parabolically induced from the character $\chi_{1}\otimes\chi_{2}$ on the standard Levi subgroup $B=ZNA$ of $G$. The parabolic induction is normalised by multiplying $\chi_{1}\otimes\chi_{2}$ by $\abs{y}^{1/2}$ (which is the square-root of the modulus character on $B$) before inducing; see equation (4.9) of \cite{gelbart}. This ensures that $\chi_{1}\boxplus \chi_{2}$ is unitary whenever $\chi_1$ and $\chi_2$ are unitary. The representation $\chi_{1}\boxplus \chi_{2}$ is irreducible if and only if $\chi_1 \chi_2^{-1} \notin \{| \cdot |, \ | \cdot |^{-1}  \}$. This condition is automatically satisfied if $a(\chi_1) \ne a(\chi_2)$, or more generally, if $a(\chi_1 \chi_2^{-1}) \neq 0$.  Whenever $\chi_{1}\boxplus \chi_{2}$ is irreducible, it is true that $\chi_{1}\boxplus \chi_{2}\isom \chi_{2}\boxplus \chi_{1}$ and $\omega_{\pi}=\chi_{1}\chi_{2}$.

For an irreducible, admissible, generic representation $\pi$ of $G$, we let $L(s, \pi)$ denote the local $L$-factor and $\eps(s, \pi)=\eps(s, \pi, \psi)$ denote the local $\eps$-factor with respect to our fixed additive character $\psi$; these factors are defined by their existence in \cite[Theorem 2.18]{jacquet-langlands}. It is known that $\eps(s, \pi, \psi)$ is a non-zero complex number and $\eps(s, \pi, \psi)= \eps(1/2, \pi | \cdot |^{1/2 -s}, \psi)= q^{(1/2-s)a(\pi)}\eps(1/2, \pi, \psi)$.

\subsubsection{A classification of representations of $G$}\label{sec_classification}

For our analysis, we give a classification of the irreducible, admissible, generic representations $\pi$ of $G$ satisfying $a(\pi)\geq 1$. This classification is well known in the literature (for example, see \cite[Theorem 4.18, Theorem 4.21, and Remark 4.25]{gelbart}).

\begin{enumerate}\vspace{0.05in}

\item $\pi \simeq\chi\St$, a twist of the Steinberg representation $\St$ by an unramified  character $\chi$; these have $a(\pi)=1$, $\omega_{\pi}=\chi^{2}$ and $L(s,\pi)=L(s,\chi\abs{\,\cdot\,}^{1/2})$.\vspace{0.05in}

\item $\pi \simeq \chi_{1}\boxplus \chi_{2}$, where $\chi_{1},\chi_{2}$ are  characters of $\Fx$ with $a(\chi_{1})>0=a(\chi_{2})$. These have $a(\pi)=a(\chi_{1})\geq 1$, $\omega_{\pi}=\chi_{1}\chi_{2}$ and $L(s,\pi)=L(s,\chi_{2})$.\vspace{0.05in}

\item $\pi$ satisfies $L(s,\pi)=1$. In this case we enumerate the following subcases:\vspace{0.05in}
\begin{enumerate}
\item $\pi \simeq\chi\St$, where $a(\chi)>0$; these have $a(\pi)=2a(\chi)\geq 2$.\vspace{0.05in}
\item $\pi \simeq\chi_{1}\boxplus \chi_{2}$, where $\chi_{1},\chi_{2}$ are characters with $a(\chi_{1})\geq a(\chi_{2})>0$ and $\chi_1 \chi_2^{-1} \notin \{| \cdot |, \ | \cdot |^{-1} \}$; these have $a(\pi)=a(\chi_{1})+a(\chi_{2})\geq 2$.\vspace{0.05in}
\item $\pi$ is supercuspidal; these also have $a(\pi)\geq 2$.
\end{enumerate}
\end{enumerate}

By a well-known result of Tunnell \cite[Prop. 3.4]{tunnell} it follows that a supercuspidal representation $\pi$ always satisfies $a(\omega_\pi) \le a(\pi)/2$.

\subsubsection{Conductors of character-twists of representations}\label{sec_character_twists}

\begin{defn}\label{defminimal}
We call $\pi$ \textit{minimal} if $a(\pi)=\min\{ a(\chi\pi)\,:\,\chi\in\Xc\}$.
\end{defn}

\begin{example}\label{exx}

Representations of type (1) and type (2) are always minimal. Representations of types (3.a) and (3.b) are never minimal.

\end{example}

\begin{lemma}\label{lem_wal}

Let $\pi$ be an irreducible, admissible, generic representation of $G$ and let $\chi$ be a  character of $\Fx$. We have that the conductor
\begin{equation}\label{eq_pecan}
a(\chi\pi)\leq \max\left\lbrace\, a(\pi),\,a(\omega_{\pi})+a(\chi),\,2a(\chi)\, \right\rbrace.
\end{equation}
Moreover, we have equality in \eqref{eq_pecan} in each of the following cases: (i) $a(\omega_\pi) \le a(\pi)/2$, $\pi$ minimal, (ii) $a(\omega_\pi) \le a(\pi)/2$, $a(\pi) \neq 2 a(\chi)$, (iii) $a(\omega_\pi) > a(\pi)/2$, $\pi$ minimal, $a(\pi) \neq  a(\chi)$, (iv) $a(\omega_\pi) > a(\pi)/2$, $a(\chi) \notin \{a(\omega_\pi), a(\pi) - a(\omega_\pi)\}$.

\end{lemma}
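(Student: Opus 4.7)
The plan is to prove Lemma \ref{lem_wal} by a direct case-by-case analysis based on the classification (1)--(3.c) of irreducible admissible generic representations of $G$ given in \S\ref{sec_classification}. The key elementary ingredient is the fact that for two characters $\mu,\mu'$ of $\Fx$, one has $a(\mu\mu')\leq \max\{a(\mu),a(\mu')\}$, with equality whenever $a(\mu)\neq a(\mu')$. Together with the bound $a(\omega_\pi)\leq a(\pi)/2$ for supercuspidal $\pi$ noted at the end of \S\ref{sec_classification}, this will yield both the inequality \eqref{eq_pecan} and the equality statements.

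For the non-supercuspidal types, the twist $\chi\pi$ has an explicit description: a Steinberg $\chi_{0}\St$ becomes $(\chi\chi_{0})\St$ with conductor $\max\{1,\,2a(\chi\chi_{0})\}$; a principal series $\chi_{1}\boxplus\chi_{2}$ becomes $(\chi\chi_{1})\boxplus(\chi\chi_{2})$ (still irreducible, since $\chi_{1}\chi_{2}^{-1}$ is unchanged) with conductor $a(\chi\chi_{1})+a(\chi\chi_{2})$. Substituting $a(\chi\chi_{i})\le \max\{a(\chi),a(\chi_{i})\}$ and comparing with $\max\{a(\pi),a(\omega_{\pi})+a(\chi),2a(\chi)\}$ establishes \eqref{eq_pecan} in types (1), (2), (3.a), (3.b). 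The same bookkeeping reveals that strict inequality can occur only when $a(\chi)$ coincides with $a(\chi_{i})$ (or $a(\chi_{0})$)---the only situation in which the character product formula admits a conductor drop. For supercuspidal $\pi$ (type (3.c)), the twist $\chi\pi$ is again supercuspidal, and I would invoke the standard formula of Tunnell: $a(\chi\pi)=a(\pi)$ if $a(\chi)<a(\pi)/2$, $a(\chi\pi)=2a(\chi)$ if $a(\chi)>a(\pi)/2$, and $a(\chi\pi)\leq a(\pi)$ when $a(\chi)=a(\pi)/2$. Combined with $a(\omega_\pi)\le a(\pi)/2$, this gives \eqref{eq_pecan} for supercuspidals, with strict inequality confined to the boundary $a(\chi)=a(\pi)/2$.

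It remains to verify the equality cases (i)--(iv). Cases (ii) and (iv) are designed precisely to exclude the boundary values at which a conductor collapse can occur---namely $a(\chi)=a(\pi)/2$ when $a(\omega_\pi)\le a(\pi)/2$, and $a(\chi)\in\{a(\omega_\pi),a(\pi)-a(\omega_\pi)\}$ otherwise (this set equals $\{a(\chi_{1}),a(\chi_{2})\}$ in type (3.b) with $a(\chi_{1})>a(\chi_{2})$, and $\{a(\pi),0\}$ in type (2))---so equality follows immediately from the case analysis above. In cases (i) and (iii) the minimality hypothesis supplies the missing ingredient: by Definition \ref{defminimal}, $a(\chi\pi)\ge a(\pi)$ for all $\chi$, and together with \eqref{eq_pecan} this forces equality whenever the right side of \eqref{eq_pecan} equals $a(\pi)$, which is exactly the boundary scenario. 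Case (i) thus handles minimal $\pi$ of types (1) and (3.c) (recalling $a(\omega_\pi)\le a(\pi)/2$ for supercuspidals), while (iii) handles the always-minimal type (2), for which the relevant boundary value is $a(\chi)=a(\pi)=a(\omega_\pi)$; types (3.a) and (3.b), never minimal (Example \ref{exx}), contribute no further cases. The principal obstacle in this plan is securing the supercuspidal conductor formula cleanly---since it cannot be read off from a parabolic induction description as in types (1)--(3.b)---and this is best dispatched via stability of local $\eps$-factors or the local Langlands correspondence rather than explicit construction of supercuspidal types.
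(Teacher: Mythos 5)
Your proof is correct and follows essentially the same route as the paper: reduce the principal-series cases to the elementary conductor formula for products of characters (with equality when the conductors differ), quote Tunnell's result for the supercuspidal case, and use minimality to supply the lower bound $a(\chi\pi)\ge a(\pi)$ at the boundary values where a conductor drop is possible. The only cosmetic difference is that the paper invokes Tunnell's argument for all square-integrable representations (so twists of Steinberg are absorbed there), whereas you treat them via the explicit formula $a((\chi\chi_0)\St)=\max\{1,2a(\chi\chi_0)\}$; both work.
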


\begin{proof}
In the supercuspidal case, this follows from \cite[Proposition 3.4]{tunnell}; in fact the proof there holds verbatim for all square-integrable representations $\pi$. The Lemma also holds trivially when $a(\pi)=0$, as in this case $a(\chi \pi) = 2 a(\chi)$. So  we are left to only prove the lemma for the ramified principal series representations $\chi_1 \boxplus \chi_2$, types (2) and (3.b) in our notation. Of these, the former is always minimal and the latter is never minimal. Moreover in those cases, we have the formulas: $a(\pi)=a(\chi_{1})+a(\chi_{2})$, $a(\omega_\pi) = a(\chi_1 \chi_2)$, $a(\chi \pi)=a(\chi \chi_{1})+a(\chi \chi_{2})$. The problem is thus reduced to computing conductors of one-dimensional representations; each case may thus be determined via the observation $a(\mu_1\mu_2)\leq \max\{a(\mu_1),a(\mu_2)\}$ for arbitrary characters $\mu_1$, $\mu_2$, with equality guaranteed whenever $a(\mu_1)\neq a(\mu_2)$.
\end{proof}

\begin{example}
As a special case of Lemma \ref{lem_wal}, if a supercuspidal representation $\pi$ has the property that $a(\pi)$ is odd or $a(\pi)=2$,  then it is automatically minimal.
A well-known theorem of Atkin--Li \cite[Theorem 4.4]{atkin-li} gives a partial converse when $q=2$: a representation $\pi$ of $\GL_{2}(\Q_2)$ with $a(\omega_\pi)<a(\pi)/2$ is minimal if and only if $a(\pi)$ is odd or $a(\pi)=2$. \end{example}

\subsubsection{Newforms and the Whittaker model}

We shall work in the Whittaker model $\Wh(\pi,\psi)\isom\pi$ which carries the right-regular action of $G$ in the space of functions $W\colon G\rightarrow \C$ satisfying $W(z n(x)g)=\omega_{\pi}(z)\psi(x)W(g)$ for each $z\in Z$, $x\in F$, $g\in G$.

\begin{defn}\label{def:localwhit}

We call the unique $K_{1}(a(\pi))$-invariant vector $W_{\pi}\in\Wh(\pi,\psi)$ satisfying $W_{\pi}(1)=1$ the \textit{normalised Whittaker newform}.
\end{defn}

The following lemma records the value of $W_{\pi}$, for each $\pi$, on the toral elements $a(\varpi^{r})\in A$.

\begin{lemma}\label{lem_dough} Suppose $a(\pi)\geq 1$.

\begin{itemize}

\item If $\pi \simeq\chi_{1}\boxplus\chi_{2}$ with $a(\chi_{1})>0=a(\chi_{2})$, then
\begin{equation}\label{e:toral1}
W_{\pi}(a(\varpi^{r}))=\left\lbrace\begin{array}{cl}
(\chi_{1}(\varpi)q^{-1/2})^{r}& \text{if } r\geq 0\\
0& \text{if } r<0.
\end{array}\right.
\end{equation}

\item If $\pi \simeq\chi\St$ with $a(\chi)=0$, then
\begin{equation}\label{e:toral2}
W_{\pi}(a(\varpi^{r}))=\left\lbrace\begin{array}{cl}
(\chi(\varpi)q^{-1})^{r}& \text{if } r\geq 0\\
0& \text{if } r<0.
\end{array}\right.
\end{equation}

\item Else if $\pi$ satisfies $L(s,\pi)=1$, then
\begin{equation}\label{e:toral3}
W_{\pi}(a(\varpi^{r}))=\left\lbrace\begin{array}{cl}
1& \text{if } r=0\\
0& \text{if } r\neq 0.
\end{array}\right.
\end{equation}
\end{itemize}
\end{lemma}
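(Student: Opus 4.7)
The plan is to prove the three formulas case-by-case, following the classification of conductor-$\geq 1$ representations given in Section~\ref{sec_classification}. The unifying framework will be the Kirillov model realisation $\xi_\pi(y) := W_\pi(a(y))$ on $F^\times$. A preliminary observation: writing $a(u\varpi^r) = z(u)\,a(\varpi^r)\,\operatorname{diag}(1, u^{-1})$ and noting that $\operatorname{diag}(1, u^{-1}) \in K_1(n)$ for every $n \geq 0$, the central-character relation together with $K_1(a(\pi))$-invariance gives $W_\pi(a(u\varpi^r)) = \omega_\pi(u)\,W_\pi(a(\varpi^r))$ for every $u \in \Oix$. This reduces the problem to computing the values at $y = \varpi^r$.

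For the cases with $L(s,\pi) = 1$---types (3.a), (3.b), and (3.c) of the classification---the goal is to show that $\xi_\pi$ is supported on $\Oix$. This can be proved uniformly via Casselman's asymptotic expansion of Whittaker functions on the torus: the exponents governing this expansion arise from the Jacquet module $\pi_N$, and the combinatorial structure of $\pi_N$ in each of these families (together with $K_1(a(\pi))$-invariance and the triviality of $L(s,\pi)$) rules out any contribution at $y = \varpi^r$ with $r \neq 0$. Combined with the normalisation $W_\pi(1) = 1$, this yields \eqref{e:toral3}.

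For the ramified principal series $\pi = \chi_1 \boxplus \chi_2$ with $a(\chi_1) = n \geq 1$ and $a(\chi_2) = 0$, I would work in the induced model and identify the $K_1(n)$-invariant function $f_0$ explicitly via the Iwasawa decomposition on $K$. The Whittaker newform is then obtained from the Jacquet integral
\begin{equation*}
W_\pi(g) = \int_F f_0(w n(x) g)\,\psi(-x)\,dx,
\end{equation*}
evaluated at $g = a(\varpi^r)$. Carrying out this integration using the Iwasawa decomposition of $wn(x) a(\varpi^r)$ split into the regions $|x| \leq q^{-r}$ and $|x| > q^{-r}$, together with the orthogonality of $\psi$ against $\chi_1|_{\Oix}$ (a character of positive conductor), produces \eqref{e:toral1}: the first region yields the geometric progression $(\chi_1(\varpi) q^{-1/2})^r$ for $r \geq 0$, while the second region vanishes identically and forces $W_\pi(a(\varpi^r)) = 0$ for $r < 0$.

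The unramified-twist Steinberg case $\pi = \chi\St$ with $a(\chi) = 0$ can be handled by realising $\pi$ as the unique irreducible subrepresentation of $\chi|\cdot|^{1/2} \boxplus \chi|\cdot|^{-1/2}$. The $K_1(1)$-fixed newform of $\chi\St$ equals the Iwahori-fixed vector in this reducible induced representation minus an appropriate scalar multiple of the spherical vector; applying the Macdonald formula to both pieces and taking the difference produces the geometric progression $(\chi(\varpi) q^{-1})^r$ for $r \geq 0$, yielding \eqref{e:toral2}. The principal obstacle in the whole proof is the explicit Jacquet-integral computation of the third step (in particular getting the Iwasawa decomposition of $wn(x)$ correct for $|x|$ near the critical radius $q^{-r}$); once that is carried out, the remaining cases follow either by analogous explicit computations or by the structural arguments via Casselman's asymptotics.
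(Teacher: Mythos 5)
Your strategy is genuinely different from the paper's. The paper disposes of this lemma in a few lines: it relates $W_\pi$ to the $K'_1(a(\pi))$-invariant Whittaker vector of the contragredient via $W^*_{\tilde\pi}(g)=\omega_\pi^{-1}(\det g)\,W_\pi(g)$ and then quotes the explicit table of newform values in the Kirillov model from \cite{ralf-newforms}. You propose instead to recompute that table from scratch. For \eqref{e:toral1} and \eqref{e:toral2} your outline (Jacquet integral applied to the invariant vector of the induced model; the $\chi\St$ newvector expressed inside the reducible induced representation and evaluated via Casselman--Shalika/Macdonald) is the standard computation and would succeed if carried through carefully. One concrete trap to flag: the two conventions $K_1(n)$ (entry $a\in U_n$, as in this paper) and $K'_1(n)$ (entry $d\in U_n$, as in most references including \cite{ralf-newforms}) produce Kirillov-model values differing by $\omega_\pi(\varpi^r)$; the $d$-convention yields $(\chi_2(\varpi)q^{-1/2})^r$, and only the $a$-convention gives the $(\chi_1(\varpi)q^{-1/2})^r$ asserted in \eqref{e:toral1}. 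Your $f_0$ must therefore be the $K_1(n)$-fixed vector in the paper's sense, not the one usually written down in the literature.

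The genuine gap is in your argument for \eqref{e:toral3}. Casselman's asymptotic expansion controls $W_\pi(a(\varpi^r))$ only for $r$ sufficiently large, and for supercuspidal $\pi$ (where the Jacquet module vanishes) it says only that $y\mapsto W_\pi(a(y))$ has compact support in $F^\times$; neither statement excludes nonvanishing at, say, $r=1$. Moreover, the link you invoke between $L(s,\pi)=1$ and the vanishing of these particular values is precisely the nontrivial content: for an arbitrary Whittaker function $W$ one knows only that its local zeta integral lies in $L(s,\pi)\,\C[q^{s},q^{-s}]$, which for the newvector and $L(s,\pi)=1$ shows the support is bounded, not that it equals $\Oix$. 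To conclude one needs either the explicit model computations for each of types (3.a)--(3.c) separately (which is what \cite{ralf-newforms} does), or the functional-equation argument: the support bound $r\ge 0$ holds both for $W_\pi$ and for its Atkin--Lehner translate (which is proportional to the newvector of $\tilde\pi$), and the local functional equation then forces both zeta integrals to be constants. Note also that with the paper's $a\in U_n$ convention the untwisted zeta integral $\sum_r q^{-r(s-1/2)}\int_{\Oix}W_\pi(a(u\varpi^r))\,\dxv$ degenerates whenever $\omega_\pi$ is ramified (every unit integral vanishes), so this route must pass either through the $K'_1$-vector or through character-twisted integrals, exactly as in the paper's basic identity. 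As written, the mechanism you name does not deliver the conclusion; you need to substitute one of these actual arguments.
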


\begin{proof}
The above formulae are well-known in the literature; for example, they appear verbatim in \cite[(121)]{transfer}. For completeness, we give a detailed proof here relying on the results in \cite{ralf-newforms}.

Let $W_{\tilde{\pi}}^*$ be the unique $K'_1(a(\pi))$-invariant vector in $\Wh(\tilde\pi,\psi)$ satisfying $W^*_{\tilde\pi}(1)=1$ (recall that $a(\pi) = a(\tilde \pi))$. As $\tilde\pi \cong \omega_\pi^{-1}\pi$, we may define a linear isomorphism  from $\Wh(\pi,\psi)$ to $\Wh(\tilde\pi,\psi)$  by $W \mapsto W'$ where $W'(g) = \omega_\pi^{-1}(\det (g)) W(g)$. The function $W_\pi'$ is  $K'_1(a(\pi))$-invariant, so by uniqueness, we have $W_{\tilde{\pi}}^*(g) = W_\pi'(g)= \omega_\pi^{-1}(\det (g)) W_\pi(g)$. This implies that
\begin{equation}\label{keyeqstar}W_\pi(a(\varpi^r)) = \omega_\pi(\varpi^r) W_{\tilde{\pi}}^*(a(\varpi^r)).\end{equation}

The values $W^*_{\tilde{\pi}}(a(y))$ in all cases were computed explicitly and written down in the table just before Section 3 of \cite{ralf-newforms}. We note here that the function $y \mapsto W^*_{\tilde{\pi}}(a(y))$ is the local newform in the Kirillov model of $\tilde{\pi}$ by the conventions of \cite{ralf-newforms}.

In particular, when $\pi \simeq\chi_{1}\boxplus\chi_{2}$ with $a(\chi_{1})>0=a(\chi_{2})$ then $\tilde \pi \simeq \chi_{1}^{-1}\boxplus\chi_{2}^{-1}$ and the table from \cite{ralf-newforms} gives us that $W_{\tilde{\pi}}^*(a(\varpi^r)) = \chi_2^{-1}(\varpi^r).$ Combined with \eqref{keyeqstar} this gives us \eqref{e:toral1}.
If $\pi \simeq\chi\St$ with $a(\chi)=0$, then $\tilde \pi \simeq \chi^{-1}\St$ and the table of \cite{ralf-newforms} gives us that $W_{\tilde{\pi}}^*(a(\varpi^r)) = q^{-r} \chi^{-1}(\varpi^r)$ if $r \ge 0$ and equal to 0 if $r<0$. Combined with \eqref{keyeqstar} this gives us \eqref{e:toral2}.
Finally if $L(s, \pi)=1$, then the table of \cite{ralf-newforms} gives us that $W_{\tilde{\pi}}^*(a(\varpi^r)) = 1$ if $r = 0$ and equal to 0 if $r\ne0$. Combined with \eqref{keyeqstar} this gives us \eqref{e:toral3}.
\end{proof}

\subsection{The vanishing index}\label{sec_evaluate_Whittaker}

Throughout the rest of \S\ref{s:local}, $\pi$ will be an irreducible, admissible, generic representation of $G$. We will denote $n=a(\pi)$ and $m=a(\omega_{\pi})$. The triple $(t,l, v)$ will always be so that $t\in \Z$, $0\le l \le n$, $v\in \OF^\times$. By Lemma 2.13 of \cite{saha-large-values} and by right-$K_{1}(n)$-invariance, the newform $W_{\pi}$ is completely determined by its values on the representatives $g_{t,l,v}$  of $G$; moreover, for each $t,l$, the map $v \mapsto W_\pi(g_{t,l,v})$ depends only on $v$ modulo $U_l$ and the map $v \mapsto |W_\pi(g_{t,l,v})|$ depends only on $v$ modulo $U_{\min(l, n-l)}$.

The following  proposition was proved in \cite[Prop. 2.10]{saha-hybrid}.

\begin{proposition}\label{dpiprop}  Define
$
d_{\pi}(l)=\max\{n, l+m, 2l\}
$. Suppose $W_\pi(g_{t,l,v}) \neq 0$. Then $t \ge - d_{\pi}(l).$
\end{proposition}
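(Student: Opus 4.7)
My plan is to exploit the right $K_1(n)$-invariance of $W_\pi$, together with its transformation properties under left translation by $Z$ and $N$, by constructing elements of $ZN$ whose conjugation by $g_{t,l,v}$ lies in $K_1(n)$. For $s\in\Oix$ and $x\in F$, set
$$k(s,x)\,:=\,g_{t,l,v}^{-1}\,z(s)n(x)\,g_{t,l,v}.$$
Whenever $k(s,x)\in K_1(n)$, the identity $g_{t,l,v}\cdot k(s,x)=z(s)n(x)\cdot g_{t,l,v}$, together with the right invariance $W_\pi(\cdot\,k)=W_\pi$ on $K_1(n)$ and the Whittaker transformation $W_\pi(z(s)n(x)h)=\omega_\pi(s)\psi(x)W_\pi(h)$, forces
$$(1-\omega_\pi(s)\psi(x))\,W_\pi(g_{t,l,v})=0.$$
Hence to deduce $W_\pi(g_{t,l,v})=0$ under the assumption $t<-d_\pi(l)$, it will suffice to produce one admissible pair $(s,x)$ with $\omega_\pi(s)\psi(x)\neq 1$.

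The first step is a direct matrix calculation that yields
$$k(s,x)\,=\,s\begin{pmatrix}1+xv\varpi^{-l-t}& xv^2\varpi^{-2l-t}\\ -x\varpi^{-t}& 1-xv\varpi^{-l-t}\end{pmatrix}.$$
Comparing with the definition \eqref{e:defk1} of $K_1(n)$ entry-by-entry reduces membership to the two conditions $x\in\varpi^{t+\max(n,2l)}\Oi$ (forced by the $(1,2)$ and $(2,1)$ entries) and $s\equiv(1+xv\varpi^{-l-t})^{-1}\pmod{U_n}$ (forced by the $(1,1)$ entry).

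Now assume $t<-d_\pi(l)=-\max(n,\,l+m,\,2l)$ and restrict to $x\in\varpi^{t+d_\pi(l)}\Oi$; this is a genuine subset of the feasible $x$ since $d_\pi(l)\ge\max(n,2l)$. For such $x$, the element $\xi:=xv\varpi^{-l-t}$ satisfies $v(\xi)\ge d_\pi(l)-l\ge m$, so $(1+\xi)\in U_m$ and therefore $\omega_\pi(1+\xi)=1$; combined with $m\le n$ and the congruence determining $s$, this gives $\omega_\pi(s)=1$. The vanishing condition collapses to $\psi(x)=1$ for every $x\in\p^{t+d_\pi(l)}$. Since $t+d_\pi(l)<0$, this fractional ideal strictly contains $\p^{-1}$, on which $\psi$ is non-trivial by construction; choosing any $x\in\p^{t+d_\pi(l)}$ with $\psi(x)\neq 1$ supplies the required pair and forces $W_\pi(g_{t,l,v})=0$.

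The main technical step is the bookkeeping in the matrix computation, where the three contributions to $d_\pi(l)$—the bound $n$ coming from the $(2,1)$-entry, the bound $2l$ coming from the $(1,2)$-entry, and the bound $l+m$ coming from the central-character constraint needed to kill $\omega_\pi(s)$—must be correctly identified as the sharp thresholds. Once those are in place, the concluding Fourier-analytic step using $\psi$ is essentially automatic.
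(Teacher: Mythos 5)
Your proof is correct: the conjugation computation $g_{t,l,v}^{-1}z(s)n(x)g_{t,l,v}$, the resulting membership conditions in $K_1(n)$, and the appeal to the non-triviality of $\psi$ on $\p^{-1}$ all check out, and this is exactly the standard argument by which the cited reference \cite{saha-hybrid} proves this proposition (the paper itself gives no proof, only the citation). Two cosmetic points: when $t+d_\pi(l)=-1$ the ideal $\p^{t+d_\pi(l)}$ equals rather than strictly contains $\p^{-1}$, which is harmless; and the existence of a unit $s$ with $s(1+xv\varpi^{-l-t})\in U_n$ requires $1+xv\varpi^{-l-t}\in\Oix$, which is automatic once $n\ge 1$ since then $v(xv\varpi^{-l-t})\ge d_\pi(l)-l\ge\max(l,n-l)\ge 1$ (the case $n=0$ being trivial, as the newform is then spherical).
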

\begin{remark}The quantity $d_\pi(l) - l$ is invariant under the substitution $l \mapsto n-l$.
\end{remark}

\begin{defn}\label{def_local_ram_index}
For each $0\le l \le n$, the level $l$ \emph{vanishing index} $e_\pi(l)$ of $\pi$  is defined via
\begin{align*}
e_{\pi}(l)&=\min\left\lbrace\, r\geq 0\ :\ \exists v \in \OF^\times \text{ satisfying } W_{\pi}(g_{r-d_{\pi}(l),l,v})\neq 0\, \right\rbrace \\&=\min\left\lbrace\, r\geq 0\ :\ \int_{v\in\Oix} \abs{W_{\pi}(g_{r-d_{\pi}(l),l,v})}^{2}\,\dxv\neq 0\, \right\rbrace.
\end{align*}
\end{defn}
We now state our main local result.
\begin{theorem}\label{thm_andys_theorem}

Let $\pi$ be an irreducible, admissible, generic representation of $G$ with conductor $a(\pi)=n$. Let $l$ be an integer such that $0\leq l\leq n$. Then if $q>3$ we have $e_{\pi}(l)=0$.

If $q=3$, then $e_{\pi}(l)=0$, except in one case:

\begin{itemize}
\item $e_{\pi}(l)=1$ if

\begin{enumerate}

\item[(i)] $\pi \simeq\chi_{1}\boxplus\chi_{2}$ with   $a(\chi_{1})=a(\chi_{2})=l$, $n=2l\geq 4$, and $a(\chi_{1}\chi_{2}^{-1})=l$.

\end{enumerate}
\end{itemize}

If $q=2$, then $e_{\pi}(l)=0$, aside from the following exceptions:

\begin{itemize}

\item $e_{\pi}(l)=1$ when

\begin{enumerate}

\item[(ii)] $\pi \simeq\chi_{1}\boxplus\chi_{2}$ with $a(\chi_{1})$ and $a(\chi_{2})$ both at least 2, $a(\chi_{1}) \neq a(\chi_2)$, and $l=a(\chi_{1})$ or $l=a(\chi_{2})$;

\item[(iii)] $\pi \simeq\chi\pi_{0}$ and $n=2l\geq 4$ where $\pi_{0}$ is a supercuspidal representation with\footnote{As $n-1$ is odd, note that  $\pi_0$ is automatically minimal.} $a(\pi_{0})=n-1$ and $\chi$ is a  character of conductor $a(\chi)=n/2$.

\end{enumerate}

\item $e_{\pi}(l)=2$ when

\begin{itemize}

\item[(iv)] $\pi \simeq\chi\St$ with $a(\chi)\ge 2$ and $n=2l = 2 a(\chi)\geq 4$;

\item[(v)] $\pi \simeq\chi_{1}\boxplus\chi_{2}$ with $n=2l \ge 4$, $a(\chi_{1})=a(\chi_{2})=l$, $\chi_1 \chi_2^{-1} \notin \{| \cdot |, \ | \cdot |^{-1} \}$, and  $a(\chi_{1}\chi_{2}^{-1})<l-1$;

\item[(vi)] $\pi \simeq\chi\pi_{0}$ and $n=2l\geq 4$ where $\pi_{0}$ is a minimal supercuspidal representation with $a(\pi_{0})\leq n-2$ and $\chi$ is a character of conductor $a(\chi)=n/2$.
\end{itemize}

\item $e_{\pi}(l)=3$ when

\begin{itemize}

\item[(vii)] $\pi \simeq\chi_{1}\boxplus\chi_{2}$ with $n=2l\ge 6$, $a(\chi_{1})=a(\chi_{2})=l$ and $a(\chi_{1}\chi_{2}^{-1})=l-1$.

\end{itemize}

\end{itemize}
\end{theorem}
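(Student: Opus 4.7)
The plan is to apply the ``basic identity'' for newform Whittaker values proved by the second author in \cite{saha-large-values}, which comes from the Jacquet--Langlands local functional equation followed by Fourier analysis on a finite abelian group. This identity writes $W_{\pi}(g_{t,l,v})$ as a finite linear combination, indexed by characters $\mu \in \Xc$, of products of three factors: a Gauss sum $\Ga(\,\cdot\,,\mu)$, the $\eps$-factor $\eps(1/2,\mu^{-1}\pi,\psi)$, and a toral Whittaker value $W_{\mu^{-1}\pi}(a(\varpi^{s}))$, where $s=s(t,l,a(\mu))$ is determined by the functional equation. Integrating $|W_\pi(g_{t,l,v})|^{2}$ in $v\in\Oix$ and expanding, orthogonality of characters of $\Oix/U_{l}$ (and its quotients) collapses the double sum to a diagonal sum of manifestly non-negative terms indexed by a single $\mu$. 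Thus $e_{\pi}(l)$ is exactly the smallest $r\geq 0$ for which at least one such $\mu$-term, evaluated at $t=r-d_{\pi}(l)$, is non-zero.

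The first step is to translate non-vanishing of an individual $\mu$-term into conditions on the conductors $a(\mu)$ and $a(\mu^{-1}\pi)$. By Lemma \ref{lem_pine}, the Gauss sum $\Ga(v\varpi^{-?},\mu)$ is non-vanishing only when $a(\mu)$ matches a specific value determined by $t$ and $l$. By Lemma \ref{lem_dough}, the toral Whittaker value $W_{\mu^{-1}\pi}(a(\varpi^{s}))$ forces $s\geq 0$ and additionally forces $a(\mu^{-1}\pi)$ to take a prescribed value depending on $r$, $l$, and the classification type of $\mu^{-1}\pi$ from \S\ref{sec_classification}. Combining the two constraints, the entire problem reduces to a character-counting problem: for each $r=0,1,2,\ldots$ in turn, does there exist $\mu\in\Xc$ with $a(\mu)$ prescribed and $a(\mu^{-1}\pi)$ prescribed, where the twisted conductor is computed via Lemma \ref{lem_wal}?

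When $q\geq 4$, part (1) of Lemma \ref{lem_monkeys} provides enough characters in $\Xc_{l}/\Xc_{l-1}$ to realise the $r=0$ prescription in every case; for supercuspidal $\pi$ one additionally invokes parts (i)--(iv) of Lemma \ref{lem_wal} to read off $a(\mu^{-1}\pi)$. This gives $e_{\pi}(l)=0$ for all $q\geq 4$. When $q=3$, the same lemma continues to apply \emph{unless} $\pi\simeq\chi_{1}\boxplus\chi_{2}$ with $a(\chi_{1}\chi_{2}^{-1})=l$ (the scenario of Lemma \ref{lem_monkeys}(2)), at which point the only candidate character produces $\{a(\mu\chi_{1}),a(\mu\chi_{2})\}=\{l,l-1\}$ and the next-level term survives, yielding $e_{\pi}(l)=1$; this is exactly case (i).

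The main obstacle is the bookkeeping for $q=2$, where Lemma \ref{lem_monkey}(2) and parts (3)--(4) of Lemma \ref{lem_monkeys} combine to force substantial extra vanishing, and one must verify the exact value of $e_{\pi}(l)$ for each of the types (ii)--(vii). For each case I would: (a) use the classification to write $\pi$ in its standard form and compute $n$, $m$, $d_{\pi}(l)$; (b) determine, using Lemma \ref{lem_wal}, the function $\mu\mapsto a(\mu^{-1}\pi)$; (c) at each $r=0,1,2,3$, determine the required pair $(a(\mu),a(\mu^{-1}\pi))$ from the Gauss sum and toral-value conditions; (d) apply the appropriate counting Lemma to show no such $\mu$ exists for $r$ below the claimed value and that one exists at the claimed value. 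The most delicate subcases are (v) and (vii), where one must iterate the counting argument two or three times to find a $\mu$ in the smaller class $\Xc'_{l-2}$ or $\Xc'_{l-3}$; and (iii), (vi), where the Atkin--Li characterization of minimal supercuspidals over $\Q_{2}$ recorded in the Example after Lemma \ref{lem_wal} is used to distinguish $e_{\pi}(l)=1$ from $e_{\pi}(l)=2$. Throughout, the fact that $|\eps(1/2,\mu^{-1}\pi,\psi)|=1$ ensures that no further cancellation can occur among the non-zero $\mu$-contributions.
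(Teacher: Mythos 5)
Your proposal follows essentially the same route as the paper's proof: the basic identity of Proposition \ref{prop_basic_identity} from \cite{saha-large-values}, orthogonality on $\Oix/U_l$ to reduce $e_\pi(l)$ to the minimal degree $t_\pi(\mu,l)$ of the individual $\mu$-coefficients, the Gauss-sum support condition forcing $a(\mu)=l$, and the reduction to maximizing the twisted conductor $a(\mu\pi)$ (plus the degree of $L(s,\mu\pi)^{-1}$) via Lemmas \ref{lem_wal}, \ref{lem_monkey} and \ref{lem_monkeys}, followed by the same case analysis over the classification of \S\ref{sec_classification}. One small point of care for the execution: in cases (v) and (vii) the character $\mu$ must still satisfy $a(\mu)=l$ exactly (otherwise the Gauss sum kills the term); it is the twisted conductors $a(\mu\chi_1), a(\mu\chi_2)$ that drop to $l-1$ or below, which is precisely what parts (3)--(4) of Lemma \ref{lem_monkeys} quantify.
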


We give a proof of Theorem \ref{thm_andys_theorem} in \S \ref{sec_proof_of_thm}.

\subsubsection{Basic properties}

\begin{proposition}\label{basic1prop} The vanishing index $e_\pi(l)$ has the following properties:
\begin{enumerate}
\item For all unramified characters $\chi$ we have $e_\pi(l) = e_{\chi\pi}(l).$

\item We have $e_\pi(l) = e_{\tilde{\pi}}(n-l)$.
\end{enumerate}
\end{proposition}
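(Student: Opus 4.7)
The plan is to verify each claim by directly tracking how the Whittaker newform transforms under the operation in question.

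For~(1), observe that since $\chi$ is unramified, both $a(\chi\pi)=n$ and $a(\omega_{\chi\pi})=a(\chi^2\omega_\pi)=m$, so $d_{\chi\pi}(l)=d_\pi(l)$. The function $g\mapsto\chi(\det g)W_\pi(g)$ lies in $\Wh(\chi\pi,\psi)$, is $K_1(n)$-invariant (because $\chi$ is trivial on $\det(K)=\OF^\times$), and takes value $1$ at the identity; by uniqueness it equals $W_{\chi\pi}$. Since $\chi(\varpi^t)\neq 0$, the nonvanishing sets for $W_\pi(g_{t,l,v})$ and $W_{\chi\pi}(g_{t,l,v})$ agree pointwise, and the claim follows directly from Definition~\ref{def_local_ram_index}.

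For~(2) I would use an Atkin-Lehner argument. Let $\eta_n=\bigl(\begin{smallmatrix}0&1\\\varpi^n&0\end{smallmatrix}\bigr)$; a short calculation shows $\eta_n K_1'(n)\eta_n^{-1}=K_1(n)$, so $g\mapsto W_{\tilde\pi}(g\eta_n)$ is a $K_1'(n)$-invariant element of $\Wh(\tilde\pi,\psi)$. However, the proof of Lemma~\ref{lem_dough} already identifies the unique such normalized vector as $W_{\tilde\pi}^*(g)=\omega_\pi^{-1}(\det g)W_\pi(g)$. By one-dimensionality, there is a nonzero constant $c$ satisfying
\[
W_{\tilde\pi}(g\eta_n)\;=\;c\,\omega_\pi^{-1}(\det g)\,W_\pi(g).
\]

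The next step is a direct coset computation: I would place $g_{t,l,v}\eta_n$ in the Iwasawa-type decomposition of Lemma~2.13 of~\cite{saha-large-values} and show
\[
g_{t,l,v}\eta_n \;=\; z(v\varpi^{n-l})\,n\!\bigl(-\varpi^{t+l}/v\bigr)\,g_{t+2l-n,\,n-l,\,-v}\,k
\]
for a suitable $k\in K_1(n)$. Applying $W_{\tilde\pi}$ to both sides and combining with the Atkin-Lehner relation above gives, up to nonzero scalar factors, the equivalence $W_\pi(g_{t,l,v})\neq 0\iff W_{\tilde\pi}(g_{t+2l-n,n-l,-v})\neq 0$. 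Setting $t=r-d_\pi(l)$ and applying the remark following Proposition~\ref{dpiprop} (namely $d_\pi(l)-l=d_\pi(n-l)-(n-l)$, together with $d_{\tilde\pi}=d_\pi$ since $a(\omega_{\tilde\pi})=m$) yields $t+2l-n=r-d_{\tilde\pi}(n-l)$, so the indices $r$ line up; the bijection $v\mapsto-v$ on $\OF^\times$ then identifies the sets over which the minima are taken, proving~(2).

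The main obstacle is the explicit coset computation for $g_{t,l,v}\eta_n$: one must simultaneously identify the correct level $n-l$ representative together with the precise left $ZN$ factors and the right $k\in K_1(n)$ so that the shifts $t\mapsto t+2l-n$ and $v\mapsto -v$ emerge cleanly. Once this is in hand, the matching of the $d_\pi$- and $d_{\tilde\pi}$-shifts is forced by the symmetry $d_\pi(l)-l=d_\pi(n-l)-(n-l)$, and the remaining assertions reduce to bookkeeping.
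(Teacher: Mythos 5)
Your proof is correct. Part (1) is exactly the paper's argument (the paper simply invokes the identity $W_{\chi\pi}(g)=\chi(\det g)W_\pi(g)$; your verification that $d_{\chi\pi}(l)=d_\pi(l)$ is a worthwhile extra check, since the definition of $e_\pi(l)$ is anchored at $-d_\pi(l)$). For part (2) you take a genuinely different, self-contained route: the paper first twists by an unramified character so that $\omega_\pi(\varpi)=1$ and then \emph{cites} the ``generalised Atkin--Lehner relation'' of [saha-large-values, Prop.\ 2.28], which is precisely the equivalence $W_\pi(g_{t,l,v})\neq 0\iff W_{\tilde\pi}(g_{t+2l-n,n-l,-v})\neq 0$, whereas you rederive that relation from scratch via the element $\eta_n$, the one-dimensionality of the $K_1'(n)$-fixed line, and the identification $W^*_{\tilde\pi}(g)=\omega_\pi^{-1}(\det g)W_\pi(g)$ already established in the proof of Lemma \ref{lem_dough}. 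Your coset identity does check out: one finds
\[
g_{t,l,v}\,\eta_n=\mat{\varpi^{t+n}}{0}{-v\varpi^{n-l}}{-1}
= z(v\varpi^{n-l})\,n\bigl(-\varpi^{t+l}/v\bigr)\,g_{t+2l-n,\,n-l,\,-v}\,\mat{1}{0}{0}{-v^{-2}},
\]
and $\mat{1}{0}{0}{-v^{-2}}\in K_1(n)$, so $W_{\tilde\pi}(g_{t,l,v}\eta_n)$ differs from $W_{\tilde\pi}(g_{t+2l-n,n-l,-v})$ by the nonzero factor $\omega_{\tilde\pi}(v\varpi^{n-l})\psi(-\varpi^{t+l}/v)$; combined with $W_{\tilde\pi}(g\eta_n)=c\,\omega_\pi^{-1}(\det g)W_\pi(g)$, $c=W_{\tilde\pi}(\eta_n)\neq 0$, this is exactly the cited relation. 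The final bookkeeping via $d_\pi(l)-l=d_{\tilde\pi}(n-l)-(n-l)$ is the same as the paper's. What your route buys is independence from the external reference (and you do not even need the normalisation $\omega_\pi(\varpi)=1$, since all the scalar factors are manifestly nonzero); what the paper's route buys is brevity.
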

\begin{proof}

The first assertion follows from the fact that $W_{\pi\chi} (g) = \chi(\det(g)) W_\pi(g)$ for all unramified characters $\chi$. For the second identity, we may first twist $\pi$ by an unramified character to ensure that $\omega_{\pi}(\varpi)=1$ (by part (1), this does not change the vanishing index). Moreover, $d_\pi(l) =  d_{\tilde \pi}(l)= d_{\tilde{\pi}}(n-l)+2l-n$. Now use the ``generalised Atkin-Lehner relation" of \cite[Prop. 2.28]{saha-large-values} that implies that $W_{{\pi}}(g_{t,l,v})$ is non-zero
if and only if $W_{\tilde\pi}(g_{t+2l-n,n-l,-v})
$ is non-zero. The result follows.
\end{proof}

\begin{proposition}\label{basic2prop}

Suppose that $n \le 1$. Then   $e_\pi(l) = 0$.

\end{proposition}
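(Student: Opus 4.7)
I would prove the proposition by handling $n=0$ and $n=1$ separately. For $n=0$ everything is immediate: $\pi$ is unramified so $m=0$ and $d_\pi(0)=0$, while $g_{0,0,v}=wn(v)\in K=K_{1}(0)$, and the normalisation $W_\pi(1)=1$ together with $K_{1}(0)$-invariance gives $W_\pi(g_{0,0,v})=1\ne 0$. For $n=1$ I would invoke Proposition \ref{basic1prop}(2): since $\tilde\pi$ also has conductor $1$, the relation $e_\pi(l)=e_{\tilde\pi}(n-l)$ means that proving $e_\pi(0)=0$ for every conductor-$1$ representation $\pi$ automatically yields $e_\pi(1)=e_{\tilde\pi}(0)=0$. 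Because $m\le n=1$, we have $d_\pi(0)=\max\{1,m\}=1$, so my task is to exhibit some $v\in\Oix$ with $W_\pi(g_{-1,0,v})\ne 0$.

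The first observation is that $g_{-1,0,v}=a(\varpi^{-1})w\cdot n(v)$, and the matrix $n(v)=\left(\begin{smallmatrix}1 & v\\0 & 1\end{smallmatrix}\right)$ lies in $K_{1}(1)$ for every $v\in\Oi$ (its $(1,1)$-entry is $1\in U_1$ and its $(2,1)$-entry is $0\in\varpi\Oi$). By right-$K_{1}(1)$-invariance of the newform $W_\pi$, the value $W_\pi(g_{-1,0,v})=W_\pi(a(\varpi^{-1})w)$ is therefore independent of $v$. A direct computation shows $a(\varpi^{-1})w=z(\varpi^{-1})\alpha$, where $\alpha=\left(\begin{smallmatrix}0 & 1\\-\varpi & 0\end{smallmatrix}\right)$ is the Atkin--Lehner element at level $\varpi$; consequently $W_\pi(a(\varpi^{-1})w)=\omega_\pi(\varpi^{-1})W_\pi(\alpha)$, and I am reduced to showing $W_\pi(\alpha)\ne 0$.

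For this final step, I would verify by routine matrix multiplication the conjugation identity $\alpha K_{1}(1)\alpha^{-1}=K'_{1}(1)$, using the notation of \eqref{e:defk1}. Because $\pi^{K_{1}(1)}$ is one-dimensional by the newform theorem, this conjugation transports the one-dimensionality to $\pi^{K'_{1}(1)}$; let $W_\pi^{*}\in\Wh(\pi,\psi)$ denote the unique generator normalised so that $W_\pi^{*}(1)=1$, whose existence follows by the same device used in the proof of Lemma \ref{lem_dough} (namely, passing through the isomorphism $\tilde\pi\cong\omega_\pi^{-1}\pi$ and appealing to the analogous normalisation for $W_{\tilde\pi}$). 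The vector $\pi(\alpha)W_\pi$ is then $K'_{1}(1)$-invariant, so $\pi(\alpha)W_\pi=\lambda W_\pi^{*}$ for some $\lambda\in\C$; invertibility of $\pi(\alpha)$ together with $W_\pi\ne 0$ forces $\lambda\ne 0$. Evaluating both sides at the identity gives $W_\pi(\alpha)=\lambda\ne 0$, completing the proof. The main technical point is the conjugation identity $\alpha K_{1}(1)\alpha^{-1}=K'_{1}(1)$: it is a short but essential computation because it furnishes the intertwiner between the two Iwahori conventions and is what ultimately pins down non-vanishing.
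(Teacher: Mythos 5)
Your proof is correct, but for the $n=1$ step it takes a genuinely different route from the paper's. The paper also invokes Proposition \ref{basic1prop}(2), but reduces to $l=1$ rather than $l=0$: there $d_\pi(1)=2$, and a single explicit matrix identity exhibits $g_{-2,1,1}$ as an element of $ZN$ times a matrix of $K_1(1)$, whence $|W_\pi(g_{-2,1,1})|=1$ immediately from the defining transformation properties of $W_\pi$ — no representation-theoretic input beyond the normalisation $W_\pi(1)=1$. Your reduction to $l=0$ instead lands you on $W_\pi(a(\varpi^{-1})w)=\omega_\pi(\varpi^{-1})W_\pi(\alpha)$, i.e.\ on the value at the Atkin--Lehner element, whose non-vanishing is not free: you must supply the one-dimensionality of $\pi^{K_1'(1)}$ via the conjugation $\alpha K_1(1)\alpha^{-1}=K_1'(1)$ and the fact (borrowed from the contragredient device in the proof of Lemma \ref{lem_dough}) that the $K_1'(1)$-invariant vector does not vanish at the identity. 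All of these steps check out — in particular your conjugation identity and the non-vanishing of $\lambda$ are correct — so the argument is complete; it is simply heavier than the paper's, in exchange for identifying the relevant Whittaker value with the Atkin--Lehner pseudo-eigenvalue, which is conceptually clarifying. Your $n=0$ case (observing $g_{0,0,v}=wn(v)\in K$) is a touch more explicit than the paper's one-line dismissal but amounts to the same thing.
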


\begin{proof}

The case $n=0$ is trivial, since $W_\pi(1) \neq 0$. Now suppose $n=1$. Using part (2) of the previous Proposition, we may assume (by replacing $\pi$ by $\tilde{\pi}$ if necessary) that $l=1$. In this case, $d_{\pi}(1)=2$ and the matrix $g_{-2,1,1}$ lies in the double coset class $ZNK_1(1)$; this follows from the more general formula
\begin{equation}
a(y)=\begin{pmatrix}
y&\\
&1
\end{pmatrix}= z(-\varpi^{n})n(\varpi^{v(y)-n})g_{v(y)-2n,n,1} \arraycolsep=4pt\def\arraystretch{0.5}\begin{pmatrix}
1&\\
-\varpi^{n}&1
\end{pmatrix}a(\varpi^{-v(y)}y)
\end{equation}
for any $y\in\Fx$ and $n\geq 0$. It follows that $|W_{\pi}(g_{-2,1,1})|=1$ and therefore $e_\pi(1)=0$.
\end{proof}

\subsubsection{Uniform vanishing and rationality}\label{s:localrational}As it stands, the quantity $e_\pi(l)$ is characterized by the following properties:
\begin{enumerate}

\item $W_\pi(g_{t,l,v}) = 0$ for all $t < e_\pi(l) - d_\pi(l)$, and all $v \in \OF^\times$.
\item $W_\pi(g_{e_\pi(l) - d_\pi(l),l,v}) \neq 0$ for \emph{some} $v \in \OF^\times$.
\end{enumerate}

It would be nice if in the second condition above, we could replace ``some" by ``all". While this cannot be done in general, there are  indeed some  situations where this is possible. One such situation is when $n \le 1$, as then $e_\pi(l)=0$ and $|W_\pi(g_{- d_\pi(l),l,v})|$ (which depends only on $v$ modulo $U_{\min(l, n-l)}$) is non-zero for all $v$ since $0\le l \le 1$.  We now describe another such situation in the special case $F=\Q_p$.

\begin{proposition}\label{prop:localrational}

Suppose that $F = \Q_p$ and that there exists a complex number $s$ such that $\pi_s := \pi | \cdot |^s$ has the property that $\Q(\pi_s)$ is a number field. Suppose that for some $0 \le l \le n$, $\Q(\pi_s) \cap \Q(\mu_{p^{\min\{l, n-l\}}}) = \Q$. Then for any integer $t$, the following are equivalent:

\begin{enumerate}
\item $W_\pi(g_{t,l,v}) \neq 0$ for some $v \in \Zpx$.

 \item  $W_\pi(g_{t,l,v}) \neq 0$ for all $v \in \Zpx$.
\end{enumerate}
In particular, $W_\pi(g_{e_\pi(l) - d_\pi(l),l,v}) \neq 0$ for all $v \in \Zpx$.
\end{proposition}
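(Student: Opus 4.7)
The plan is to exploit Galois symmetry by applying automorphisms $\sigma \in \Aut(\C/\Q(\pi_s))$ to the Whittaker newform: this will permute its values at the matrices $g_{t,l,v}$ as $v$ varies, and the rationality hypothesis will ensure enough Galois conjugates are available to sweep out every coset in $\Zpx / U_{\min(l,n-l)}$.

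As a first step, I reduce to the case that $\Q(\pi)$ itself is a number field. Since $W_{\pi_s}(g) = |\det g|^s W_\pi(g)$ and $|\det g|^s \ne 0$, the Whittaker newforms of $\pi$ and $\pi_s$ have identical zero sets on $G$, and the conclusion is unchanged on replacing $\pi$ by $\pi_s$. So I may assume $\Q(\pi)$ is a number field.

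The core step is to establish, for any $\sigma \in \Aut(\C/\Q(\pi))$, the Galois transformation law
\[
\omega_\pi(a)\,\sigma(W_\pi(g)) = W_\pi(a(a)\, g), \qquad g \in G,
\]
where $a := \chi_p(\sigma) \in \Zpx$ is the image of $\sigma$ under the $p$-adic cyclotomic character. Indeed, since $\psi$ takes values in $p^\infty$-th roots of unity and ${}^\sigma \pi \simeq \pi$, the function $g \mapsto \sigma(W_\pi(g))$ is a $K_1(n)$-invariant vector in the Whittaker model $\Wh(\pi, \psi_a)$ (with $\psi_a(x) := \psi(ax)$) taking value $1$ at the identity. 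By uniqueness of the newform in that model, it must coincide with the function $g \mapsto W_\pi(a(a) g)/\omega_\pi(a)$; the normalization $W_\pi(a(a)) = \omega_\pi(a)$ follows from the factorization $a(a) = z(a)\,\mathrm{diag}(1, a^{-1})$ with $\mathrm{diag}(1, a^{-1}) \in K_1(n)$, and $a(a)$ normalizes $K_1(n)$. A direct matrix manipulation (using $\mathrm{diag}(1, a^{-1})\, w = w\, a(a^{-1})$ and $a(a^{-1})\, n(x) = n(x a^{-1})\, a(a^{-1})$) then yields
\[
a(a)\, g_{t,l,v} = g_{t,l,va^{-1}} \cdot \mathrm{diag}(1, a),
\]
and since $\mathrm{diag}(1,a) \in K_1(n)$, right-invariance gives $W_\pi(a(a) g_{t,l,v}) = W_\pi(g_{t,l,va^{-1}})$. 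Hence $W_\pi(g_{t,l,v}) \ne 0 \iff W_\pi(g_{t,l,va^{-1}}) \ne 0$ for every such $\sigma$.

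To finish, set $k := \min(l,n-l)$. The hypothesis $\Q(\pi) \cap \Q(\mu_{p^k}) = \Q$ combined with Galois theory implies that every class in $\Zpx/U_k$ is realized as $\chi_p(\sigma)\bmod U_k$ for some $\sigma \in \Aut(\C/\Q(\pi))$. Given $v_1, v_2 \in \Zpx$, I pick such a $\sigma$ with $\chi_p(\sigma) \equiv v_1 v_2^{-1} \pmod{U_k}$; since $|W_\pi(g_{t,l,v})|$ depends only on $v$ modulo $U_k$ (as recorded in \S\ref{sec_evaluate_Whittaker}), non-vanishing at $v_1$ propagates to non-vanishing at $v_2$, establishing the equivalence of (1) and (2). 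The main subtlety I anticipate is in correctly identifying $\sigma \circ W_\pi$ as a specific element of $\Wh(\pi, \psi_a)$ and in tracking the normalization under the change of additive character; once these are in hand, the remainder is routine.
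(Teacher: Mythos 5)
Your proof is correct and is essentially the paper's argument: both reduce to $s=0$, transport $W_\pi$ by an automorphism $\sigma$ fixing $\Q(\pi)$ while tracking the change of additive character through the cyclotomic character, invoke uniqueness of the normalized newform, and use the linear-disjointness hypothesis to make the resulting units sweep out all of $\Zpx/U_{\min(l,n-l)}$. The only cosmetic difference is that the paper pre-composes with $m_\sigma=\mathrm{diag}(1,t_\sigma)$ so as to stay inside $\Wh(\pi,\psi)$, whereas you apply $\sigma$ first, land in $\Wh(\pi,\psi_a)$, and correct by left translation by $a(a)$; the underlying matrix manipulations are the same.
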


\begin{proof}

Using the fact that $W_{\pi_s} (g) = |\det(g)|^s W_\pi(g)$, it follows that (by replacing $\pi$ by $\pi_s$) we may assume that $s=0$.

For each $\sigma \in \Aut(\C)$ let $t_\sigma \in \Zpx$ be the unique element such that $\sigma(\psi(x)) = \psi(t_\sigma x)$ for all $x \in \Q_{p}$. The map $\sigma \mapsto t_\sigma$ factors through $\Gal(\Q(\mu_{p^\infty}) /\Q)$ and, as is well-known, gives an isomorphism $\Gal(\Q(\mu_{p^\infty}) /\Q) \simeq \Z_p^\times$ (in fact, this is a special case of class field theory). The image of $\Gal(\Q(\mu_{p^\infty}) /\Q(\mu_{p^r}))$ under this isomorphism is precisely $U_{r}$. In particular, as $\sigma$ traverses  $\Gal(\Q(\pi)\Q(\mu_{p^{\min\{l, n-l\}}})/\Q(\pi)) \simeq \Gal(\Q(\mu_{p^{\min\{l, n-l\}}})/\Q)$, $t_\sigma$ traverses every coset in $\Z_p^\times/U_{\min\{l, n-l\}}$.

Next, let $m_\sigma = \mat{1}{}{}{t_\sigma}$. It is easy to check that the map $W \mapsto W'$ defined by $W'(g) := \sigma(W(m_\sigma g))$ is $\sigma$-linear and takes $\Wh(\pi, \psi)$ to $\Wh({}^\sigma\pi, \psi).$ Therefore, we get $W_{{}^\sigma\pi}(g) = \sigma(W_\pi(m_\sigma g)).$
In particular, for each $\sigma \in \Aut(\C/\Q(\pi))$, we have $W_{\pi}(g_{t,l,v}) \ne 0 \Leftrightarrow  W_\pi(g_{t,l,vt_\sigma})\neq0$. Letting $\sigma$ vary in $\Gal(\Q(\pi)\Q(\mu_{p^{\min\{l, n-l\}}})/\Q(\pi))$, we see that if $W_{\pi}(g_{t,l,v}) \ne 0$ for some $v$ in $\Z_p^\times/U_{\min\{l, n-l\}}$, then $W_{\pi}(g_{t,l,v}) \ne 0$ for \emph{all} $v$ in $\Z_p^\times/U_{\min\{l, n-l\}}$.
\end{proof}

\subsection{The proof of Theorem \ref{thm_andys_theorem}}\label{sec_proof_of_thm}

In this subsection, we prove Theorem \ref{thm_andys_theorem}. Thanks to Propositions \ref{basic1prop} and \ref{basic2prop}, we can and will make the following assumptions throughout this subsection:
 $n \ge 2$, $\omega_\pi \in \Xc$, $l \le \frac{n}2$.

\subsubsection{The basic identity}

We now review a powerful tool for computing the values $W_{\pi}(g_{t,l,v})$. For each $t,l$, the function on $v\in\Oix$ given by $v\mapsto W_{\pi}(g_{t,l,v})$ is well defined on the quotient $\Oix/U_l$. By Fourier inversion, for each $\mu\in\Xc_{l}$ there exists a Fourier coefficient $c_{t,l}(\mu)\in\C$ such that
\begin{equation}\label{eq_hazel}
W_\pi(g_{t,l,v})=\sum_{\mu\in\Xc_{l}}c_{t,l}(\mu)\,\mu(v).
\end{equation}
In \cite[\S 2]{saha-large-values} it was shown that one can mill down the zeta-integrals occurring in the local functional equation for $\GL_{2}$ to reveal a polynomial equation in  the $c_{t,l}(\mu)$: we call this \textit{the basic identity}.

\begin{prop}\label{prop_basic_identity} Assume that $\omega_{\pi}(\varpi)=1$. We have the following identity between
 polynomials in the variables $q^{s}$ and $q^{-s}$:

\begin{equation*}
\begin{array}{l}\vspace{0.1in}
\displaystyle\varepsilon(1/2,\mu\pi)\, \sum_{t\in\Z}\, q^{(t+a(\mu\pi))(1/2-s)}\,c_{t,l}(\mu)\,L(s,\mu\pi)^{-1}\hfill\\
\hspace{0.5in}\displaystyle=\omega_{\pi}(-1)\,\sum_{r\geq 0}\, \,\,q^{-r(1/2-s)}\,W_\pi(a(\varpi^r)) \, \Ga(\varpi^{r-l},\mu^{-1})\,L(1-s,\mu^{-1}\omega_{\pi}^{-1}\pi)^{-1}.
\end{array}
\end{equation*}
\end{prop}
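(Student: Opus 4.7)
The plan is to derive the identity by applying the local Jacquet--Langlands functional equation for $\GL_2(F)$ to the Whittaker newform $W_\pi$, after twisting by the character $\mu$. For any $\mu\in\Xc_l$, consider the twisted toral zeta integral
\[
Z(s,\mu) \;=\; \int_{\Fx} W_\pi(a(y))\, \mu(y)\, |y|^{s-1/2}\, \dxy,
\]
in which the function $g\mapsto\mu(\det g)W_\pi(g)$ lies in the Whittaker model $\Wh(\mu\pi,\psi)$. The idea is that one side of the resulting functional equation will reconstruct the RHS of the basic identity, while the other side, after a Bruhat--Iwasawa rearrangement, will reconstruct the LHS.

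By Lemma~\ref{lem_dough} the toral side of the functional equation can be evaluated explicitly, and the local $L$-factor $L(s,\mu\pi)$ cancels off; the remaining toral sum is the series $\sum_{r\ge 0} q^{-r(s-1/2)}W_\pi(a(\varpi^r))$ appearing on the RHS of the proposition. The Gauss sum $\Ga(\varpi^{r-l},\mu^{-1})$ and the factor $L(1-s,\mu^{-1}\omega_\pi^{-1}\pi)^{-1}$ on that same side arise from re-expanding the $y$-integration after a change of variables associated to $\mu$ and the Weyl element. For the opposite side, $W_\pi$ is evaluated on matrices of the form $w a(y) n(x)$. Decomposing $G$ into double cosets $ZN g_{t,l,v} K_1(n)$ and using right $K_1(n)$-invariance of $W_\pi$, the integrand is reorganised according to the valuation $l=-v(x)$ of the unipotent parameter and the coset class represented by $g_{t,l,v}$; the integration over the unit $v\in\Oix$ then extracts, via Fourier inversion and the defining relation \eqref{eq_hazel}, the coefficient $c_{t,l}(\mu)=\int_{\Oix}W_\pi(g_{t,l,v})\mu^{-1}(v)\,\dxv$. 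Finally, the conductor-dependent piece of $\varepsilon(s,\mu\pi,\psi)=q^{(1/2-s)a(\mu\pi)}\varepsilon(1/2,\mu\pi,\psi)$, inherited from the functional equation, supplies the exact shift in the exponent $(t+a(\mu\pi))(1/2-s)$ appearing on the LHS of the basic identity.

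The step I expect to be the main obstacle is the bookkeeping surrounding the Bruhat--Iwasawa decomposition of $wa(y)n(x)$: tracking how $v(y)$, $l$, and the unit part of $y$ interact when one brings $wa(y)n(x)$ into the standard form $z\cdot n(\,\cdot\,)\cdot g_{t,l,v}\cdot k$ with $k\in K_1(n)$, and identifying the exact $\psi$-phase of the form $\psi(v\varpi^{r-l})$ that, upon integration against $\mu$ over $\Oix$, produces the Gauss sum $\Ga(\varpi^{r-l},\mu^{-1})$. The hypothesis $\omega_\pi(\varpi)=1$ serves to absorb the central character contribution into a single power of $q$, while the overall sign $\omega_\pi(-1)$ on the RHS emerges from pulling the Weyl element $w$ past $a(y)$. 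Once this bookkeeping is carried out, the desired identity is simply an equality of Laurent polynomials in $q^{\pm s}$ obtained by matching the two sides of the functional equation after the substitutions above; the details are implemented in \cite[\S 2]{saha-large-values}.
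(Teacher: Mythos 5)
Your overall strategy (the Jacquet--Langlands local functional equation combined with Fourier inversion on $\Oix/U_l$) is the right one and matches the paper's, but the proposal omits the single step that makes the level $l$ enter the identity at all: the functional equation must be applied not to $W_\pi$ itself but to the \emph{translated} vector $W'=\pi(w\,n(\varpi^{-l}))W_\pi$. In the paper's argument the primary zeta integral $Z(W',s,\mu)=\int_{\Fx}W_\pi(a(y)\,w\,n(\varpi^{-l}))\,\mu(y)\,|y|^{s-1/2}\,\dxy$ is exactly what produces the coefficients $c_{t,l}(\mu)$, because $a(v\varpi^{t})\,w\,n(\varpi^{-l})$ lies in $g_{t,l,\ast}K_1(n)$; the dual integral $Z(\pi(w)W',1-s,\mu^{-1}\omega_\pi^{-1})$ involves $W'(a(y)w)=W_\pi(a(y)w^{2}n(\varpi^{-l}))=\omega_\pi(-1)\,\psi(y\varpi^{-l})\,W_\pi(a(y))$, and it is precisely this $\psi(y\varpi^{-l})$ phase, integrated against $\mu^{-1}$ over $\Oix$, that yields the Gauss sum $\Ga(\varpi^{r-l},\mu^{-1})$ (it also explains the sign $\omega_\pi(-1)$, which comes from $w^{2}=-I$, not from ``pulling $w$ past $a(y)$'').

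In your setup the zeta integral is taken for $W_\pi$ directly, so neither side of the resulting functional equation depends on $l$: the primary side is $\sum_{r}W_\pi(a(\varpi^{r}))\mu(\varpi^{r})q^{-r(s-1/2)}$ with no Gauss sum, and the dual side evaluates $W_\pi$ only at matrices $a(y)w$, which lie in the cell of level $0$ and hence can only recover $c_{t,0}(\mu)$. Your appeal to ``reorganising according to the valuation $l=-v(x)$ of the unipotent parameter'' does not repair this, because in the Jacquet--Langlands zeta integrals there is no unipotent variable $x$ to decompose --- the integration is purely over the torus, and the fixed level $l$ of the proposition has to be inserted by hand through the choice of test vector. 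Two smaller inaccuracies point to the same confusion: the factor $L(1-s,\mu^{-1}\omega_\pi^{-1}\pi)^{-1}$ does not ``cancel off'' (it survives on the right-hand side as the normalisation of the dual zeta integral), and your exponent $q^{-r(s-1/2)}$ has the opposite sign to the $q^{-r(1/2-s)}$ in the statement, reflecting that this sum arises from the integral at $1-s$, not at $s$.
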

\begin{proof}
This is proved explicitly in \cite[Prop.\ 2.23]{saha-large-values}. We briefly recall the proof. Let $W' = \pi(w.n(\varpi^{-k}))W_\pi$ and let the Jacquet--Langlands local zeta integrals $Z(W',s,\mu)$, $Z(w \cdot W',1-s,\mu^{-1}\omega_\pi^{-1})$  be defined as in \S2.5 of \cite{saha-large-values}. By the Jacquet-Langlands functional equation (see Theorem 2.21 of \cite{saha-large-values}) $\frac{Z(W',s,\mu)}{L(s,\mu\pi\subscriptp \twist )}\eps(s,\mu\pi\subscriptp \twist)$  is equal to $\frac{Z(w \cdot W',1-s,\mu^{-1}\omega_\pi^{-1})}{L(1-s, \pi\subscriptp \twist \mu^{-1}\omega_\pi^{-1})}$. On the other hand, using $\eps(s, \mu \pi) = \eps(1/2,\mu \pi)q^{a(\mu\pi)(\frac12-s)}$, a calculation (performed in detail in \cite[Sec. 2.6]{saha-large-values}) gives that the left side of the Proposition is equal to $\frac{Z(W',s,\mu)}{L(s,\mu \pi)}\eps(s,\mu \pi)$  and the right side is equal to $\frac{Z(w \cdot W',1-s,\mu^{-1}\omega_\pi^{-1})}{L(1-s, \pi\subscriptp \twist \mu^{-1}\omega_\pi^{-1})}$.
\end{proof}

\begin{rem}
The fact that each side the basic identity is an element of $\C[q^{s},q^{-s}]$ follows from the proof of Jacquet--Langlands' local functional equation \cite[Theorem\ 2.18]{jacquet-langlands}.
\end{rem}

\begin{rem}
Apart from the basic identity proved in \cite[\S2]{saha-large-values}, there exist other formulas for the Whittaker newvector
which are useful in various contexts. In particular, Templier in an
unpublished manuscript from 2011, obtained a formula that
expresses the Whittaker newvector in terms of a family of ${}_2F_1$
hypergeometric integrals, see also \cite[\S4]{templier-large}; a very similar formula was  obtained independently by Hu in 2016 (also unpublished but see \cite[Lemma 2.12]{hu-subconvex} for the case of principal series), and also independently by Assing \cite[\S3]{assing-local} in all cases. However, the basic identity is more suitable for the purpose of establishing the results in the present paper.
\end{rem}

Let us introduce the notation
\begin{equation}\label{eq_butter}
t_{\pi}(\mu,l)=\min\{ t\in\Z\,:\,c_{t,l}(\mu)\neq 0\}\in\Z\cup\{\infty\}
\end{equation}
where we say $t_{\pi}(\mu,l)=\infty$ if and only if $c_{t,l}(\mu)= 0$ for all $t\in\Z$. We have already noted that $c_{t,l}(\mu)=0$ for $t<-d_\pi(l)$. In terms of the expansion \eqref{eq_hazel}, for any $t\in\Z$ and $0\leq l \leq n$ we have
$\int_{v\in\Oix} \abs{W_{\pi}(g_{t,l,v})}^{2}\,\dxv=\sum_{\mu\in\Xc_{l}}
\,\abs{c_{t,l}(\mu)}^{2}$
by orthogonality of characters. So we have
\begin{equation} \label{keyeq2}
e_{\pi}(l)=\min\lbrace t_{\pi}(\mu,l)+d_{\pi}(l) \ : \ \mu\in\Xc_{l}\rbrace.
\end{equation}
\subsubsection{A case by case analysis}

We now compute $t_{\pi}(\mu,l)$ as $\pi$ varies over the types listed in \S \ref{sec_classification}, using the formula \eqref{keyeq2} to evaluate $e_\pi(l)$ in each instance. As $n \ge 2$, we only need to consider representations of types (2) and (3).

\subsubsection*{Type (2)}

Let $\pi \simeq\chi_{1}\boxplus\chi_{2}$ with $a(\chi_{1})>0$ and $a(\chi_{2})=0$. Since $m=n$ we have $d_{\pi}(l)=n+l$. It shall be sufficient to check the case of the trivial character $\mu=1$, which belongs to $\Xc_l$ for each $l$. We compare both sides of the basic identity given by Proposition \ref{prop_basic_identity}: on the left-hand side, the least non-zero exponent of $q^{-s}$ that appears is $t_{\pi}(1,l)+n$ while on the right-hand side we claim it is $-l$. Indeed, putting $X=q^{-s}$ and using the formula \eqref{e:toral1}, the right side of the basic identity is as follows:
$$\omega_{\pi}(-1)\,(1 - \chi_1(\varpi)q^{-1} X^{-1})\sum_{r\geq 0}\, \,\,q^{-r}X^{-r}\chi_1(\varpi)^r  \Ga(\varpi^{r-l},1).
$$
Using \eqref{eq1lem}, this simplifies to $$\omega_{\pi}(-1)\zeta(1)q^{-l}X^{-l}
\chi_1(\varpi^l)(1-\chi_1(\varpi^{-1})X),$$ which makes it clear that the least non-zero exponent of $X$ on the right side is $-l$.

We thus conclude $t_{\pi}(1,l) = -n-l = - d_{\pi}(l)$. This implies that
\begin{equation}
e_{\pi}(l)=\min\lbrace t_{\pi}(\mu,l)+d_{\pi}(l) \ : \ \mu\in\Xc_{l}\rbrace \le 0.
\end{equation}
Since $e_{\pi}(l)$ is a non-negative integer, it follows that $e_{\pi}(l)=0.$

\subsubsection*{Generalities on type (3)}

Let $\pi$ be any representation of type (3). Then $L(s, \pi)=1$ and $W_\pi(a(\varpi^r))$ is non-zero if and only if $r=0$. The basic identity reads
\begin{equation}\label{eq_roasted}
\omega_{\pi}(-1) \, \Ga(\varpi^{-l},\mu^{-1})\,L(1-s,\mu^{-1}\omega_{\pi}^{-1}\pi)^{-1}=\varepsilon(1/2,\mu\pi)\, \sum_{t\in\Z}\, q^{(t+a(\mu\pi))(1/2-s)}\,c_{t,l}(\mu)\,L(s,\mu\pi)^{-1}
\end{equation}
The least $t=t_{\pi}(\mu,l)$ for which such a $c_{t,l}(\mu)$ is non-zero depends on the support for the Gauss sum (Lemma \ref{lem_pine}) and the specific form of the (at most degree-two) $L$-factors, but we can make a few general remarks. In the sequel, put $X=q^{-s}$ for simplicity.

\begin{lemma}\label{lem_salted}

Let $\pi$ be of type (3).

\begin{enumerate}

\item If $l \le 1$ (which is always the case when $n\le 3$), then $t_{\pi}(1,l) = -n$. In particular, as $d_{\pi}(l) = n$, it follows that $e_{\pi}(l)=0$ whenever $l \le 1$.

\item If $l \ge 2$, then $t_{\pi}(\mu,l) = \infty$ unless $a(\mu)=l$, in which case $t_{\pi}(\mu,l) =  -\delta -a(\mu \pi)$, where $\delta\in\{0,1,2\}$ is the degree of the polynomial $L(s,\mu\pi)^{-1}$ in $q^{-s}$.

\end{enumerate}

\end{lemma}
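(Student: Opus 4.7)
The plan is to apply the basic identity \eqref{eq_roasted} and extract $t_\pi(\mu,l)$ by comparing the minimum power of $X := q^{-s}$ on each side. Since $\pi$ is of type (3), $L(s,\pi)=1$, and the contragredient $\tilde\pi\cong\omega_\pi^{-1}\pi$ being of type (3) as well gives $L(s,\tilde\pi)=1$. More generally, whenever $\mu\pi$ or $\mu^{-1}\omega_\pi^{-1}\pi\cong\widetilde{\mu\pi}$ appears, the corresponding $L$-factor inverse is a polynomial whose degree in $X$ is the common quantity $\delta$.

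For part (1), I would take $\mu=1$, so that both $L$-factor inverses in \eqref{eq_roasted} are $1$. The left-hand side collapses to the constant $\omega_\pi(-1)\,\Gamma(\varpi^{-l},1)$, which is non-zero precisely when $l\leq 1$ by \eqref{eq1lem}. The right-hand side becomes $\varepsilon(1/2,\pi)\sum_t q^{(t+n)/2}\,c_{t,l}(1)\,X^{t+n}$, so matching a non-zero constant to this Laurent polynomial forces $c_{t,l}(1)=0$ for $t\neq -n$ and $c_{-n,l}(1)\neq 0$; hence $t_\pi(1,l)=-n$. To conclude $e_\pi(l)=0$ I would verify $d_\pi(l)=n$ when $l\leq 1$, which reduces to checking $m\leq n-1$. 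This bound is immediate in every subcase of type (3): for supercuspidals and for (3.a) it follows from Tunnell's inequality $m\leq n/2$; for (3.b) from $m\leq\max\{a(\chi_1),a(\chi_2)\}\leq n-1$, since both $a(\chi_i)\geq 1$.

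For part (2), assume $l\geq 2$. If $a(\mu)\neq l$, then Lemma \ref{lem_pine} kills the Gauss sum $\Gamma(\varpi^{-l},\mu^{-1})$: either $a(\mu)=0$ and $l\geq 2$ puts us outside the support in \eqref{eq1lem}, or $a(\mu)>0$ but $a(\mu^{-1})\neq l$ forces \eqref{e:eq2} to vanish. The left-hand side of \eqref{eq_roasted} is then identically zero, and as $\varepsilon(1/2,\mu\pi)\,L(s,\mu\pi)^{-1}$ is a non-zero polynomial in $X$, this forces every $c_{t,l}(\mu)=0$, giving $t_\pi(\mu,l)=\infty$.

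When $a(\mu)=l$, the Gauss sum is non-zero by \eqref{e:eq2}, and the strategy is a minimum-degree comparison. Using $\mu^{-1}\omega_\pi^{-1}\pi\cong\widetilde{\mu\pi}$ and factoring $L(s,\mu\pi)^{-1}=\prod_j(1-\alpha_j X)$, the inverse factor $L(1-s,\widetilde{\mu\pi})^{-1}=\prod_j(1-\alpha_j^{-1}q^{-1}X^{-1})$ is a polynomial in $X^{-1}$ of degree $\delta$ whose leading coefficient $(-1)^\delta q^{-\delta}\prod_j\alpha_j^{-1}$ is non-zero; hence the minimum power of $X$ on the left is exactly $-\delta$, with non-vanishing coefficient. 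On the right, $L(s,\mu\pi)^{-1}$ has constant term $1$ in $X$, so the minimum power of $X$ is $t_\pi(\mu,l)+a(\mu\pi)$. Equating yields $t_\pi(\mu,l)=-\delta-a(\mu\pi)$. The only subtlety in the argument is confirming that the leading coefficient of $L(1-s,\widetilde{\mu\pi})^{-1}$ does not vanish; this is automatic because the roots of an $L$-factor inverse are all non-zero, so there is no real obstacle once the setup is in place.
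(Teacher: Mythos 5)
Your proposal is correct and follows essentially the same route as the paper: specialize the basic identity to type (3) where the Whittaker function on the torus is supported at $r=0$, use Lemma \ref{lem_pine} to control the Gauss sum, and compare lowest powers of $q^{-s}$ on the two sides. (Your explicit inverse-parameter formula for $L(1-s,\widetilde{\mu\pi})^{-1}$ is slightly off in the Steinberg case, but as you note the argument only needs that this polynomial in $X^{-1}$ has degree exactly $\delta$ with non-vanishing top coefficient, which holds.)
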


\begin{proof}

In part (1) we need to consider $\mu=1$. Then $L(s,\mu\pi)=L(1-s,\mu^{-1}\omega_{\pi}^{-1}\pi)=1$ and combining \eqref{eq_roasted} with Lemma \ref{lem_pine} we have
\begin{equation*}
\omega_{\pi}(-1) \, (\zeta(1)q^{-1})^{\delta_{l,1}}=\varepsilon(1/2,\pi)\, \sum_{t}q^{(t+n)(1/2-s)}c_{t,l}(1)
\end{equation*}
implying $c_{-n,l}(\mu)$ is the only non-zero value of $c_{t,l}(\mu)$ in this case.

 For part (2), we begin by noting that $\Ga(\varpi^{-l},\mu^{-1})=0$ whenever $a(\mu)\neq l\geq 2$. So $t_{\pi}(\mu,l) = \infty$ unless $a(\mu)=l$, in which case the least exponent of $X$ occurring in the right side of \eqref{eq_roasted} is $t_{\pi}(\mu,l) + a(\mu \pi)$ and in the left side is $-\delta$. We deduce $t_{\pi}(\mu,l) =  -\delta -a(\mu \pi)$.
\end{proof}

Thus, after Lemma \ref{lem_salted}, we proceed by assuming that $l \ge 2$ (and hence $n \ge 4$) and furthermore that $a(\mu) = l$.

\subsubsection*{Type (3.a)}

Let $\pi \simeq\chi\St$ with $a(\chi)>0$. By twisting by an unramified character, we may assume that $\chi \in \Xc$. Here we note $m\leq n/2=a(\chi)$ so that $d_{\pi}(l)=n$. As explained above, we assume $l\geq 2$ and $a(\mu)=l$. First consider the case  $\mu \ne \chi^{-1}$; hence $L(s,\mu\pi)=1$. We get $t_{\pi}(\mu,l) = -a(\mu \pi)$ and hence
\begin{equation*}
 d_{\pi}(l)+t_{\pi}(\mu,l)=n-a(\mu\pi)=n-2a(\mu\chi).
\end{equation*}
So we need to look for those $\mu\in\Xc_{l}', \mu \neq \chi^{-1}$ that maximise $a(\mu\chi)$. Indeed if $l \neq n/2$, then $a(\mu\chi)=n/2$, implying $e_{\pi}(l)=0$.

So, we assume from now on that $l=n/2$, implying $a(\mu\chi) \le n/2$. By Lemma \ref{lem_monkey} we can always find $\mu\in\Xc_{l}'$ satisfying $a(\mu\chi)=l$ if and only if $q>2$; in this case again $e_{\pi}(l)=0$. If $q=2$, then by the same lemma we can choose $\mu\in\Xc_{l}'$ such that $a(\mu\chi)=l-1$ if and only if $l>2$, in which case $d_{\pi}(l)+t_{\pi}(\mu,l)=2$ and so $e_{\pi}(l)\leq 2$. If $q=2$ and $l=n/2=2$, then we have $a(\mu\chi)=0$ and hence  $d_{\pi}(l)+t_{\pi}(\mu,l) = n =4$.

Finally, we consider the case $\mu = \chi^{-1}$. Then $\delta=1$ and the basic identity gives us
\begin{equation*}
d_{\pi}(l)+t_{\pi}(\mu,l)=n-1-a(\mu\pi)=n-2
\end{equation*}
as $a(\mu\pi)=1$. This estimate falls short of $e_{\pi}(l)\leq 2$ whenever $n>4$ so we must in fact have $e_{\pi}(l)=2$ in those cases. If $n=2k=4$, then we have found $e_{\pi}(l)=2$.

So, to summarize, $e_\pi(l) =2$ if $l=n/2 \ge 2$ and equals 0 otherwise.

\subsubsection*{Type (3.b)} Let $\pi \simeq\chi_{1}\boxplus\chi_{2}$ with $a(\chi_{1})\geq a(\chi_{2})>0$. Here, $m=a(\chi_{1}\chi_{2})<n$ and $d_{\pi}(l)=\max\{n,m+l\}$. We shall need to divide our analysis into several cases.

Case 1: Suppose $a(\chi_{i})\neq l$ for both $i=1,2$. In this case $\delta=0$. Hence by Lemma \ref{lem_wal}, $t_{\pi}(\mu,l) = -a(\mu\pi) = -d_\pi(l)$ for all $\mu\in\Xc_{l}'$; consequently
$
e_{\pi}(l)=0.
$

Case 2: Suppose $a(\chi_{1}) > a(\chi_{2})=l.$ Then if $q>2$ there exists a $\mu\in\Xc_{l}'$ such that $a(\mu\chi_{2})=l$; in particular $a(\mu\chi_{i})>0$ so that $L(s,\mu\pi)=1$. Therefore we again get $e_{\pi}(l)=0$. If $q=2$ but $l>2$, then we can find a $\mu\in\Xc_{l}'$ such that $a(\mu\chi_{2})=l-1$ and this gives the maximum value for $a(\mu\pi)=l-1$; hence
$e_{\pi}(l)= d_{\pi}(l)-a(\chi_1)-l+1= 1.$
 If $q=l=2$, then we instead get $a(\mu\chi_{2})=0$; this implies that $\delta=1$  and
we once again get $e_\pi(l)=1$.

We have shown that if $a(\chi_{1}) > a(\chi_{2})=l$ then

\begin{equation*}
e_{\pi}(l)=\left\lbrace\begin{array}{ll}\vspace{0.07in}
0&\ifrm q>2\,\,{\rm or}\,\, l\leq1\\
1&\ifrm q=2 \,\,{\rm and}\,\, l\geq 2.
\end{array}\right.
\end{equation*}

Case 3: Suppose $a(\chi_{1})=a(\chi_{2})= l$ and either $q>3$ or $q=3$ and $a(\chi_{1}\chi_{2}^{-1})<l$. We have $d_{\pi}(l)=2l=n$. Lemma \ref{lem_monkeys}, part (1) implies that there exists a $\mu\in\Xc_{l}'$ such that $a(\mu\chi_{1})=a(\mu\chi_{2})=l$. We conclude  that $e_{\pi}(l)=0$.

Case 4: Suppose  $a(\chi_{1})=a(\chi_{2})= l$, $q=3$ and $a(\chi_{1}\chi_{2}^{-1})=l$. We have $d_{\pi}(l)=2l=n$.  Using Lemma \ref{lem_monkeys}, part (2), we see that the minimum value of $t_{\pi}(\mu,l)$ is $n-1$ and hence $e_{\pi}(l)=1$.

Case 5: Suppose  $a(\chi_{1})=a(\chi_{2})= l$, $q=2$ and $a(\chi_{1}\chi_{2}^{-1})<l-1$. In this case Lemma \ref{lem_monkeys}, part (3) tells us that $e_{\pi}(l)=2$ (note that if $l=2$ then $\delta =2$)

Case 6: Suppose  $a(\chi_{1})=a(\chi_{2})= l$, $q=2$ and $a(\chi_{1}\chi_{2}^{-1})=l-1$. In this case we have $l \ge 3$, and Lemma \ref{lem_monkeys}, part (4) tells us that $e_{\pi}(l)=3.$

\subsubsection*{Type (3.c)} Let $\pi$ be a supercuspidal representation of $G$. As before we can assume $a(\mu) = l\geq 2$. We have $m\leq n/2$ so $d_{\pi}(l)=n$.  For supercuspidal representations we always have $\delta=0$ so that
\begin{equation*}
e_{\pi}(l)=n + \min\{t_{\pi}(\mu,l):\mu\in\Xc_{l}\}= n - \max\{a(\mu\pi):\mu\in\Xc_{l}\}
\end{equation*}
 By Lemma \ref{lem_wal}, if $\pi$ is minimal or $n\neq 2l$, then $a(\mu\pi)=n$; hence $e_{\pi}(l)=0$ in these cases. Otherwise suppose $n=2l$ is even and that $\pi_{0}$ is the minimal supercuspidal representation such that $\pi\cong\chi\pi_{0}$ with $\chi\in\Xc$. We must have $n>a(\pi_{0})$, by assumption on $\pi$, and $a(\chi)=l=n/2$, by the minimality of $\pi_{0}$. Moreover, by Lemma \ref{lem_wal} we have
$a(\mu\pi)=\max\{a(\pi_{0}),2a(\mu\chi)\}.$
If $q>2$, then there exists a $\mu\in\Xc_{l}'$ such that $a(\mu\chi)=l$. Therefore $e_{\pi}(l)=0$ if $q>2$. If $q=2$ but $l>2$, then there exist a $\mu\in\Xc_{l}'$ such that $a(\mu\chi)=l-1$ implying
\begin{equation*}
 e_{\pi}(l)=  2 - \max\{a(\pi_0) - n+2, 0\}.
\end{equation*}
Finally, if $n=2l=4$, then $a(\mu\chi)=0$. Thus
$e_{\pi}(2)= 4-a(\pi_{0})$.
\section{Global results}
In this section, we work over the field $\Q$ for simplicity; the modifications required for a  number field are straightforward. Also, while we stick to holomorphic newforms, one could easily write down corresponding results for Maass newforms.

\subsection{Adelisation of modular forms}

Let $\A$ denote the ring of adeles over $\Q$, let $S_{\f}$ denote the set of rational primes, that is the finite places of $\Q$, and let $\infty$ denote the real place. We put $\hat \Z = \prod_{p\in S_{\f}}\Z_p$. For any place $v$ of $\Q$, and any $g \in \GL_2(\Q)$, let $\iota_{v}(g)$ be the element of $\GL_2(\A)$ whose $v$th place equals $g$ and all other places equal 1; thus $\iota_{v}$ is given by the embedding $\GL_{2}(\Q)\emb \GL_{2}(\Q_v) \emb \GL_{2}(\A)$.  We let $\iota_{\f}=\otimes_{p<\infty}\iota_{p}$, so that $\iota_{\f}(g)$ is the element of $\GL_2(\A)$ which equals $g$ at all finite places and equals the identity at the infinite place. More generally, for any $g \in \GL_2(\A)$, we use $\iota_{\f}(g)$ to denote the element of $\GL_2(\A)$ whose $p$'th component equals $g_p$ if  $p \in S_{\f}$ and whose infinite component equals the identity.

 Let $\psi \colon \Q \backslash \A \rightarrow \C^\times$ be the additive character defined by $\psi=\prod_{v}\psi_{v}$ where $\psi_{\infty}(x) = e(x)$ if $x \in \R$ and $\psi_{p}(x) = 1$ for $x \in \Z_p$.  Let $\pi \simeq \otimes_v \pi_v$ be an irreducible, unitary, cuspidal automorphic representation of $\GL_{2}(\A)$ with central character $\omega_\pi = \prod_v \omega_{\pi_v}$, which we assume to be trivial on $\R_{>0}$. We can (and shall) realise $\pi$ as a subspace of the space of square-integrable, cuspidal automorphic forms on $\GL_{2}(\A)$. Let $\chi$ be the Dirichlet character associated to $\omega_\pi$. For each $p \in S_{\f}$, let $n_p = a(\pi_p)$, $m_p =a(\omega_{\pi_p})$; we put $N = \prod_p p^{n_p}$, $M= \prod_p p^{m_p}$. Let $k \ge 2$ be an integer, and assume that $\pi_\infty$ is the holomorphic discrete series representation  of lowest weight $k$. In other words, $\pi_\infty$ is the unique irreducible subrepresentation of $| \cdot |^{\frac{k-1}2} \sgn^k \boxplus |\cdot |^{\frac{1-k}2}$; note that $\omega_{\pi_\infty} = \sgn^k.$

Let $K_1(N) = \prod_{p \in S_{\f}} K_{1,p}(n_p) = \prod_{p \nmid N} \GL_{2}(\Z_p) \prod_{p\mid N}  K_{1,p}(n_p)$ be a standard congruence subgroup of $\GL_{2}(\hat{\Z}) = \prod_{p \in S_{\f}}\GL_{2}(\Z_p)$. Above, $K_{1,p}(n_p)$ is the group defined in \eqref{e:defk1}; throughout this section, we will use subscripts to denote previously defined local objects. We have the diagonal realisation
\begin{equation}
\Gamma_1(N)=K_1(N)\GL_{2}(\R)^+ \cap \GL_{2}(\Q).
\end{equation}
 Let $K_\infty = \SO_2(\R)$, which is a  maximal compact subgroup of $\GL_{2}(\R)^+$. We say that a non-zero automorphic form $\phi \in \pi$ is an adelic newform if $\phi$
is $K_1(N)$-invariant and satisfies
\begin{equation}\label{weighteq}\phi\left(g \mat{\cos(\theta)}{\sin(\theta)}{-\sin(\theta)}{\cos(\theta)} \right) = e^{ik \theta}\phi(g)
\end{equation}
for all $g \in \GL_{2}(\A)$. It is well-known that an adelic newform $\phi$ exists and is unique up to multiples, and
corresponds to a factorizable vector $\phi= \otimes_v \phi_v$.

If $\phi$ is an adelic newform, then the function $f$ on $\H$ defined by
\begin{equation}\label{e:deadeli}
f(gi) = \det(g)^{-k/2}j(g, i)^k\phi(g)
\end{equation}
for each $g \in \GL_2(\R)^+$ is a classical newform (see \cite{Li79}) of weight $k$, level $N$, and character $\chi$. The map \eqref{e:deadeli} from adelic to classical newforms is a bijection. Indeed, given a classical newform $f$ of  weight $k$, level $N$, and character $\chi$, one has the procedure of adelisation (see \S3.1 of \cite{sahapet}) that produces an automorphic form $\phi_f$ on $\GL_2(\A)$ that is $K_1(N)$-invariant and such that \eqref{weighteq} and \eqref{e:deadeli} hold (with $\phi$ replaced by $\phi_f$). This automorphic form $\phi_f$ is the adelic newform inside an irreducible cuspidal  automorphic representation $\pi$ which has the properties described earlier.

Finally, for any elements $x$, $y$ lying in some ring, we define the matrices $n(x) = \mat{1}{x}{}{1}$, $a(y) = \mat{y}{}{}{1}$. For $z = x+iy \in \H$, put $g_z = n(x)a(y) = \mat{y}{x}{0}{1} \in \GL_{2}(\R)^{+}$. So $g_{z}i=z\in\Hb$ and we have for all $\sigma \in \SL_2(\Q)$ the equalities
\begin{equation}\label{eq:f-to-g}
(f\vert_{k}\sigma^{-1})(z)=(f\vert_{k}\sigma^{-1} g_{z})(i)= y^{-k/2}\phi_f(\iota_{\infty}(\sigma^{-1})g_{z})=
y^{-k/2}\phi(g_{z}\iota_{\f}(\sigma)).
\end{equation}

\subsection{Whittaker and Fourier expansions}
\subsubsection{Adelic Whittaker expansion}
We call a function $W\colon \GL_{2}(\A)\rightarrow\C$ a $\psi$-Whittaker function if $W$ satisfies $W(n(x)g)=\psi(x)W(g)$ for each $x\in\A$ and $g\in \GL_{2}(\A)$. By the existence and uniqueness of Whittaker models for $\GL_{2}$, there exists a (unique) subspace of such functions which, under the right-regular action of $\GL_{2}(\A)$, is isomorphic to $\pi$; this subspace is called the Whittaker model and is denoted $\Wh(\pi,\psi)$. This $\GL_{2}(\A)$-isomorphism may be explicated as the map
\begin{equation*}
\phi(g)\longmapsto W_{\phi}(g)=\int_{\Q\bs\A}\phi (n(x)g) \,\overline{\psi(x)}\,dx
\end{equation*}
where we take the invariant probability measure $dx$ on $\A$.  By Fourier inversion, we can derive a Fourier expansion for $\phi$ in terms of $W_{\phi}$,
\begin{equation}\label{eq:whittaker-expn}
\phi(g)=\sum_{\xi\in\Qx}W_{\phi}(a(\xi)g).
\end{equation}
\subsubsection{Classical Fourier expansion}
We will now explicate the relation between the adelic Whittaker expansion and the classical Fourier expansion. We begin with the following lemma.

\begin{lemma}\label{lemma:fe}Let $\phi$ be an  automorphic form in the space of $\pi$ that satisfies \eqref{weighteq} and let $\delta \in \Z$ be such that $\phi$ is right invariant by $\iota_{\f}(n(\delta u))$ for all $u \in \hat \Z$.    Let $h$ be the holomorphic function on $\H$ defined by the equation $$h(z) = j(g_z, i)^k \phi(g_z).$$ Then $h$ has a Fourier expansion given by $$h(z) = \sum_{n>0}a_h(n)e^{\frac{2 \pi i nz}{\delta}}.$$     Moreover, for all $\xi \in \Q^\times$, we have
\begin{equation*}
W_{\phi}(a(\xi) g_{z})=\begin{cases}y^{k/2}\, a_{h}(n)\,e(nz/\delta)&\text{if } \xi=n/\delta\text{ for some }n\in \Z,\\
0& \text{otherwise.} \end{cases}
\end{equation*}
\end{lemma}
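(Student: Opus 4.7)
The plan is to apply the adelic Whittaker expansion \eqref{eq:whittaker-expn} at $g = \iota_\infty(g_z)$, use the given invariances to constrain the support and form of each term $W_\phi(a(\xi)g_z)$, and then compare with the Fourier expansion of $h$.

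\emph{Support.} Since $W_\phi$ inherits from $\phi$ the right-invariance under $\iota_\f(n(\delta u))$ for $u \in \hat\Z$, the commutation $a(\xi)n(\delta u) = n(\xi \delta u)a(\xi)$ combined with the left-equivariance $W_\phi(n(x)g) = \psi(x)W_\phi(g)$ gives, for every $u \in \hat\Z$,
$$W_\phi(a(\xi)g_z) = \psi_\f(\xi \delta u)\, W_\phi(a(\xi)g_z).$$
Since $\psi_\f$ has conductor $\hat\Z$, this forces $\xi\delta \in \hat\Z \cap \Q = \Z$; hence $W_\phi(a(\xi)g_z) = 0$ unless $\xi = n/\delta$ for some nonzero $n \in \Z$.

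\emph{Form of each nonvanishing term.} Using $g_{z+t} = n(t)g_z$ at the infinite place for $t \in \R$, along with $a(n/\delta)n(t) = n(nt/\delta)a(n/\delta)$, one computes $W_\phi(a(n/\delta)g_{z+t}) = e(nt/\delta)\, W_\phi(a(n/\delta)g_z)$. Therefore $W_\phi(a(n/\delta)g_z) = w_n(y)\, e(nx/\delta)$ for some function $w_n$ of $y$ alone.

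\emph{Fourier expansion of $h$.} Combining left $\GL_2(\Q)$-invariance of $\phi$ under $n(-\delta)$ with the right invariance under $\iota_\f(n(-\delta))$ gives $h(z+\delta) = h(z)$. The lowest-weight-$k$ condition on $\pi_\infty$ amounts to the weight-lowering element of $\mathfrak{sl}_2(\C)$ annihilating $\phi$, which translates classically into $\partial_{\bar z} h = 0$; hence $h$ is holomorphic and admits a Fourier expansion of the form $h(z) = \sum_{n \in \Z} a_h(n)\, e(nz/\delta)$.

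\emph{Conclusion.} Substituting the Whittaker expansion into $h(z) = y^{-k/2}\phi(g_z)$ (the convention for $j(g_z,i)^k$ consistent with \eqref{e:deadeli}) and matching with the Fourier expansion term-by-term forces $y^{-k/2} w_n(y) = a_h(n)\, e^{-2\pi n y/\delta}$, which yields the claimed formula for $W_\phi(a(n/\delta)g_z)$. The vanishing of $a_h(n)$ for $n = 0$ follows from cuspidality of $\phi$, and for $n < 0$ from the decay of $\phi$ at the cusp $\infty$ (the terms with $n < 0$ would instead grow unboundedly as $y \to \infty$). The main subtlety is the assertion that the lowest-weight condition on $\pi_\infty$ implies $h$ is holomorphic; modulo this standard fact, the argument reduces to careful bookkeeping with the adelic versus archimedean embeddings.
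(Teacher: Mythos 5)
Your proof is correct, and it reaches the same two facts the paper needs (the support condition $\xi\in\delta^{-1}\Z$ and the identification of $W_\phi(a(n/\delta)g_z)$ with the $n$-th Fourier coefficient of $h$), but by a somewhat different mechanism. The paper computes the Whittaker integral $I_\phi(y,\xi)=\int_{\Q\bs\A}\phi(n(x')a(y))\psi(-\xi x')\,dx'$ head-on: it chooses via strong approximation the fundamental domain $[0,\delta)\times\prod_p\delta\Z_p$, factors the integral into local pieces, and reads off both the vanishing (from $\int_{\Z_p}\psi_p(-\xi\delta x_p)\,dx_p=0$ when $\delta\xi\notin\Z_p$) and the value (from the archimedean integral being the Fourier coefficient of $h$). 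You instead avoid any explicit integration: the support condition comes from the equivariance $W_\phi(a(\xi)g_z)=\psi_{\f}(\xi\delta u)W_\phi(a(\xi)g_z)$ for $u\in\hat\Z$, the $x$-dependence from conjugating the archimedean translation $n(t)$ past $a(n/\delta)$, and the coefficients from matching the resulting expansion $\sum_n w_n(y)e(nx/\delta)$ against $y^{k/2}\sum_n a_h(n)e(nz/\delta)$ by uniqueness of Fourier series (justified by the absolute, locally uniform convergence of the Whittaker expansion for cusp forms). The two arguments are logically equivalent --- your equivariance computation is exactly what makes the paper's local integrals trivial --- but yours is ``softer'' and makes clearer which hypotheses are doing what; the paper's is more self-contained in that it never needs to invoke uniqueness of Fourier coefficients. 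Two minor points you handled correctly but should keep explicit: the normalisation $h(z)=y^{-k/2}\phi(g_z)$ (which is what the paper's own proof uses, despite the slightly ambiguous $j(g_z,i)^k$ in the statement), and the fact that the vanishing of $a_h(n)$ for $n\le 0$ rests on cuspidality/rapid decay, not on holomorphy alone --- both consistent with the paper's appeal to $h$ being a holomorphic \emph{cusp} form.
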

\begin{proof}
This proposition is given in \cite[Lemma 3.6]{gelbart} in a special case. Our statement is more general, thus we give a detailed proof. We have $h(z+ \delta) = y^{-k/2} \phi(g_z \iota_{\f}(n(-\delta))) = h(z)$ and as $h$ is a holomorphic cusp form  (of weight $k$ with respect to some principal congruence subgroup), it follows that $h$ has a Fourier expansion of the type specified. Next note that
\begin{equation*}
\begin{array}{rcl}\vspace{0.1in}
W_{\phi}(a(\xi) g_{z})&=&\displaystyle\int_{\Q\bs\A}\phi(n(x')a(\xi)g_{z}
)\psi(-x')\,dx'\\\vspace{0.1in}
&=&\displaystyle\int_{\Q\bs\A}\phi(n(x'+x)a(y))\psi(-\xi x')\,dx'\\\vspace{0.1in}
&=&e(\xi x) I_{\phi}(y,\xi),
\end{array}
\end{equation*}
where
\begin{equation*}
I_{\phi}(y,\xi)=\int_{\Q\bs\A}\phi(n(x')a(y))\psi(-\xi x')\,dx'.
\end{equation*}
  Choose the following fundamental domain for the (compact) quotient $\Q\bs\A$ using strong approximation:
$\Q\bs\A=[0,\delta)\times \prod_{p<\infty}\delta \Zp.$

This gives us
$$
I_{\phi}(y,\xi)=\int_{0}^{\delta}y^{k/2}h(x_{\infty}+iy)e\left(-\xi x_{\infty}\right)\,dx_{\infty}\left(\frac1{\delta}
\prod_{p<\infty}\int_{\Z_p}\psi_p (-\xi \delta x_p) dx_{p}\right).$$ If $\delta \xi$ is not an integer, then let $p$ be any prime dividing its denominator. We have $\int_{\Z_p}\psi_p (-\xi \delta x_p) dx_{p}$ $= 0$ and so $I_{\phi}(y,\xi)=0.$ On the other hand, if $\xi = n\delta$, then $\prod_{p<\infty}\int_{\Z_p}\psi_p (-\xi \delta x_p) dx_{p} = 1$ and we have
\begin{equation*}
\begin{array}{rcl}\vspace{0.1in}
I_{\phi}(y,n/\delta)&=&\displaystyle
\frac{1}{\delta}\int_{0}^{\delta}y^{k/2}h(x_{\infty}+iy)e\left(-\xi x_{\infty}\right)\,dx_{\infty}\\
&=&y^{k/2}\,a_h(n)\,e\left(niy/\delta\right).
\end{array}
\end{equation*}
where we have used the Fourier expansion of $h$. Therefore in this case we have $$W_{\phi}(a(n/\delta) g_{z})= e(nx/\delta)I_\phi(y, n/\delta ) = y^{k/2}\, a_{h}(n)\,e(nz/\delta)$$ as required.
\end{proof}
Henceforth, let $\phi$ be an adelic newform in the space of $\pi$ and $f$ the corresponding classical newform. We have the usual Fourier expansion for $f$ at infinity given by
\begin{equation}\label{fourier4}
f(z) = \sum_{n>0}a_f(n)e^{2 \pi i nz},
\end{equation}
and henceforth we normalise $f$ and $\phi$ so that $a_f(1) = 1$. More generally, for any cusp $\a = \frac{a}{L}$ with $(a,N)=1$, we put $\delta(\a) = \frac{\lcm[L^2, N, LM]}{L^2}$, and as explained in the introduction, we have the Fourier expansion of $f$ at $\a$:
\begin{equation}\label{fourier3}
(f|_k\sigma^{-1})(z) = \sum_{n>0}a_f(n;\a)e^{\frac{2 \pi i nz}{\delta(\a)}}.
\end{equation}
Above, $\sigma$ is any matrix in $\SL_2(\Z)$ such that $\sigma(a/L) = \infty$. In particular, we may choose $\sigma = \mat{a}{b}{L}{d}^{-1}$ where $b,d$ are any integers such that $ad-bL=1$. Note that  $a_f(n) = a_f(n,  \frac1N)$, which follows by taking $\sigma=\mat{1}{}{-N}{1} \in \Gamma_1(N)$ (note also that $\delta(\infty)= \delta(1/N)=1$). When $\delta(\a)>1$, the Fourier coefficient $a_f(n , \a)$ depends not just on $\a$ but also (weakly) on the choice of $\sigma$; precisely, for two choices $\sigma$, $\sigma'$ both taking $\a$ to $\infty$, the corresponding Fourier coefficients $a_f(n , \a)$ and $a'_f(n , \a)$ are related via $a_f(n , \a)= e^{\frac{2 \pi i nt}{\delta(\a)}}a_f'(n , \a)$ where $t$ is some integer depending on $\sigma' \sigma^{-1}$.  However, the absolute value $|a_f(n, \a) |$ is independent of the choice of $\sigma$.

 From \eqref{eq:f-to-g}, we see that the modular form $f\vert_{k}\sigma^{-1}$ corresponds to the automorphic form $\phi'=\pi(\iota_{\f}(\sigma))\phi$. Let $x$ be a finite adele such that $ x/ \delta(\a) \in \hat \Z$. Using the explicit formula \begin{equation}\label{eq:formulatransform}\mat{a}{b}{L}{d} \mat{1}{x}{}{1}\mat{a}{b}{L}{d}^{-1} =  \mat{1-xaL}{a^2x}{-L^2x}{1+xaL},\end{equation} we see that our adelic newform $\phi$ is right invariant by $\sigma^{-1} n(x)\sigma$ and consequently the automorphic form $\phi'$ is right invariant by $ n(x)$.

%This immediately proves that \eqref{eq:f-to-wh-expn} equals the classical expansion \eqref{eq:fourier-expn-character} and we have returned full circle. However, computing these terms in a different way (locally) we gain some insight.
\begin{prop}\label{prop:whittaker-coeffs}
With notation as above, let $\xi\in\Qx$. Then
\begin{equation*}
W_{\phi}(a(\xi) g_{z}\iota_{\f}(\sigma))=\left\lbrace\begin{array}{cl}\vspace{0.05in}
y^{k/2}\, a_{f}(n;\a)\,e\left(nz/\delta(\a)\right)&\text{if } \xi=n/\delta(\a)\text{ for some }n\in \Z,\\
0& \text{otherwise}
\end{array}\right.
\end{equation*}
\end{prop}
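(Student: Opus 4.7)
My plan is to apply Lemma \ref{lemma:fe} to the right-translate $\phi' := \pi(\iota_{\f}(\sigma)) \phi$ and then transport the conclusion back to $\phi$ via the $\GL_{2}(\A)$-equivariance of the Whittaker functional. Unfolding $W_\phi(g) = \int_{\Q\backslash\A} \phi(n(x)g) \overline{\psi(x)}\, dx$ one sees immediately that $W_{\pi(g_0)\phi}(g) = W_\phi(gg_0)$ for every $g_0 \in \GL_{2}(\A)$; specializing to $g_0 = \iota_{\f}(\sigma)$ gives $W_{\phi'}(a(\xi) g_z) = W_{\phi}(a(\xi) g_z \iota_{\f}(\sigma))$, so the proposition reduces to computing the Whittaker expansion of $\phi'$ at the elements $a(\xi) g_z$.

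To apply Lemma \ref{lemma:fe} with parameter $\delta = \delta(\a)$, I verify its two hypotheses for $\phi'$. The weight-$k$ relation \eqref{weighteq} carries over from $\phi$ to $\phi'$ because $\iota_{\f}(\sigma)$ is supported at the finite places, while the rotation $\mat{\cos\theta}{\sin\theta}{-\sin\theta}{\cos\theta}$ lies at the archimedean place, so the two factors commute. The right invariance of $\phi'$ under $\iota_{\f}(n(\delta(\a) u))$ for $u \in \hat\Z$ is precisely what the paragraph preceding the proposition establishes, via the conjugation identity \eqref{eq:formulatransform} together with the $K_1(N)$-invariance of $\phi$.

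Finally, the holomorphic function attached to $\phi'$ by Lemma \ref{lemma:fe} is $h(z) = j(g_z, i)^k \phi'(g_z) = y^{-k/2}\phi(g_z \iota_{\f}(\sigma))$, which by \eqref{eq:f-to-g} equals $(f|_k\sigma^{-1})(z)$. Comparing Fourier expansions via \eqref{fourier3} identifies the coefficients $a_h(n)$ produced by Lemma \ref{lemma:fe} with $a_f(n;\a)$. Substituting back through the identity $W_{\phi'}(g) = W_\phi(g \iota_{\f}(\sigma))$ yields exactly the formula claimed in the proposition. The only step with any real content was the invariance verification, which was already carried out in the preceding discussion; the remainder is a direct application of Lemma \ref{lemma:fe}.
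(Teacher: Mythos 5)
Your proposal is correct and follows exactly the paper's route: apply Lemma \ref{lemma:fe} to $\phi' = \pi(\iota_{\f}(\sigma))\phi$, using the right-invariance under $\iota_{\f}(n(\delta(\a)u))$ established just before the proposition, and identify $h$ with $f|_k\sigma^{-1}$ via \eqref{eq:f-to-g}. The only difference is that you spell out the hypothesis checks and the Whittaker equivariance $W_{\pi(g_0)\phi}(g)=W_\phi(gg_0)$, which the paper leaves implicit.
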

\begin{proof}
This follows from applying Lemma \ref{lemma:fe} on the automorphic form $\phi' = \pi(\iota_{\f}(\sigma))\phi$, and using the fact that $\phi'$ is right invariant by $\iota_{\f}(n(\delta(\a) u))$ for all $u \in \hat \Z$.
\end{proof}

\subsubsection{A product formula for classical Fourier coefficients}\label{sec:product-formula-coefficients}
We can use Proposition \ref{prop:whittaker-coeffs} to pin-down the coefficients $a_{f}(n;\a)$ precisely in terms of the factorisation of $W_{\phi}$ into local Whittaker functionals. Indeed, by the uniqueness of local and global Whittaker models, we have for all $g \in G(\A)$, \begin{equation}\label{eq:W-factorisation}
W_{\phi}(g)=W_\infty(g_\infty)\prod_{p<\infty} W_{\pi_{p}}(g_p),
\end{equation} where, for each $p\in S_{\f}$, the function $W_{\pi_p}$ on $\GL_2(\Q_p)$ is defined as in Definition \ref{def:localwhit} while $W_{\infty}$ corresponds to $\phi_\infty$ in the Whittaker model for $\pi_\infty$, and is normalised so that \eqref{eq:W-factorisation} holds. To explicate the function $W_\infty$, we observe using Proposition \ref{prop:whittaker-coeffs} that for any $y>0$,
\begin{equation*}
W_{\infty}(a(y))=W_{\phi}(a(y))=y^{k/2}e^{-2 \pi y}.
\end{equation*}
%\begin{lemma}
%For any $r\geq 0$ the coefficients $a_{f}(r)$ are easily described by the following formula:

%\begin{equation}
%a_f(r) = r^{k/2}\prod_{p \in S_{\f}} W_{\pi_p}(\mat{r}{}{}{1}) = r^{k/2}\prod_{p|rN} W_{\pi_p}(\mat{r}{}{}{1})
%\end{equation}

%\end{lemma}

%\begin{proof}
%This is an immediate consequence of Proposition \ref{prop:whittaker-coeffs} and the factorisation in \eqref{eq:W-factorisation}.
%\end{proof}
\begin{comment}
\begin{rem}

Let us make one last comment on the form of the functions $W_{\pi_p}$ for those unramified $p\nmid N$. These can be computed explicitly by the well-known (for example see \cite[Th.~4.6.5]{bump}) formula of Shintani: suppose that the unramified principal series $\pi_{p}$ has Satake parameters $\alpha_{1},\alpha_{2}$, then for $u\in\Zpx$ and $m\in\Z$
\begin{equation}\label{eq:shintani}
W_{\pi_p}(a(up^{m}))=\left\lbrace\begin{array}{ll}
p^{-m/2}\,\frac{\alpha_{1}^{m+1}-\alpha_{2}^{m+1}}{\alpha_{1}-\alpha_{2}}&\text{if } m\geq 0\\
0&\text{otherwise}.
\end{array}\right.
\end{equation}

\end{rem}

\end{comment}
For each prime $p|N$, and each  $0 \le l_p \le n_p$, let the integer $d_{\pi_p}(l_p)$ be defined as in Proposition \ref{dpiprop}, and let the integer $e_{\pi_p}(l_p)$ be as defined in \eqref{def_local_ram_index} (and written down explicitly in Theorem \ref{thm_andys_theorem}). The next Proposition rewrites the Fourier coefficients $a_f(n;\a)$ in terms of the local Whittaker newforms $W_{\pi_p}$.
\begin{prop}\label{propformulafourier}
Let  $\a=\sigma^{-1}\infty$ be a cusp of $\Gamma_0(N)\bs \H$ with $\sigma^{-1}=\left(\begin{smallmatrix}
a&b\\L&d
\end{smallmatrix}\right)\in\SL_{2}(\Z)$, $L|N$, $(a,N)=1$, so that $\a$ corresponds to the cusp $a/L$.  Let $r$ be a positive integer and write $r=r_{0}\prod_{p\mid N}p^{r_{p}}$ with $(r_{0},N)=1$, and write $L=\prod_{p\mid N}p^{l_{p}}$.  For each $p|N$ define
\begin{equation*}
u_{p}=-a\times\dfrac{p^{r_{p}}}{r}\times\dfrac{[L,M,N/L]}{p^{d_{\pi_{p}}(l_{p})-l_{p}}}\in\Zpx.
\end{equation*}
Then we have
\begin{equation}\label{e:formfour}
\dfrac{a_{f}(r;\a)}{r^{k/2}}=\dfrac{a_{f}(r_{0})}{r_{0}^{k/2}}\,e\left(\dfrac{rd}{\delta(\a)L}\right)\dfrac{\prod_{p\mid N}W_{\pi_p}(g_{r_{p}-d_{\pi_{p}}(l_{p}),l_{p},u_{p}})}{\delta(\a)^{k/2}}
\end{equation}
where $W_{\pi_p}$ are the local Whittaker newforms associated to $f$ normalised so that $W_{\pi_p}(1)=1$.
\end{prop}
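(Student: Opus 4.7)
The plan is to evaluate the adelic Whittaker expansion at the point $a(\xi)g_z\iota_\f(\sigma)$ with $\xi=r/\delta(\a)$, use the factorization $W_\phi = W_\infty\prod_p W_{\pi_p}$ of \eqref{eq:W-factorisation}, and separate the contributions place by place so as to identify each piece of the right-hand side of \eqref{e:formfour}.

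First, at the archimedean layer, Proposition~\ref{prop:whittaker-coeffs} gives $W_\phi(a(\xi)g_z\iota_\f(\sigma))=y^{k/2}a_f(r;\a)\,e(rz/\delta(\a))$, and the relations $a(\xi)g_z=n(\xi x)a(\xi y)$ together with $W_\infty(a(y))=y^{k/2}e^{-2\pi y}$ yield $W_\infty(a(\xi)g_z)=e(\xi x)(\xi y)^{k/2}e^{-2\pi\xi y}$. Plugging into \eqref{eq:W-factorisation} and cancelling gives
\begin{equation*}
\frac{a_f(r;\a)}{r^{k/2}}=\delta(\a)^{-k/2}\prod_{p\in S_\f}W_{\pi_p}(a(\xi)\sigma_p),
\end{equation*}
reducing the problem to an analysis of the finite-place local factors. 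I would then establish the preliminary identity $W_{\pi_p}(a(u))=\omega_{\pi_p}(u)$ for every $u\in\Z_p^\times$ and every $p$, which follows from the decomposition $a(u)=z(u)\bigl(\begin{smallmatrix}1&0\\0&u^{-1}\end{smallmatrix}\bigr)$ whose second factor lies in $K_{1,p}(n_p)$.

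Next, at unramified primes $p\nmid N$, since $\sigma_p\in\SL_2(\Z_p)\subset K_p$ and $\omega_{\pi_p}$ is unramified, right $K_p$-invariance combined with the preliminary identity collapses $W_{\pi_p}(a(\xi)\sigma_p)$ to $W_{\pi_p}(a(p^{v_p(r)}))$ (using $v_p(\delta(\a))=0$, hence $v_p(\xi)=v_p(r)$). Applying the classical Fourier expansion at $\infty$ to the integer $r_0$, namely $\prod_p W_{\pi_p}(a(r_0))=a_f(r_0)/r_0^{k/2}$, and trivialising the contributions at $p\mid N$ using the preliminary identity produces
\begin{equation*}
\prod_{p\nmid N}W_{\pi_p}(a(p^{v_p(r)}))=\frac{a_f(r_0)/r_0^{k/2}}{\prod_{p\mid N}\omega_{\pi_p}(r_0)}.
\end{equation*}

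The heart of the argument is the ramified-place analysis. For each $p\mid N$, I would produce an Iwasawa-type decomposition
\begin{equation*}
a(\xi)\sigma_p=z(\alpha_p)\,n(x'_p)\,g_{r_p-d_{\pi_p}(l_p),\,l_p,\,u_p}\,k_p,\qquad k_p\in K_{1,p}(n_p),
\end{equation*}
starting from the elementary factorization $\sigma=n(-b/a)\bigl(\begin{smallmatrix}1/a&0\\-L&a\end{smallmatrix}\bigr)$ afforded by $ad-bL=1$, then clearing the $(2,1)$-entry by right multiplication against an appropriate element of $K_{1,p}(n_p)$. A careful match of the $(2,1)$-entry forces the $\varpi$-exponent of the middle factor to equal $r_p-d_{\pi_p}(l_p)$ and isolates $u_p$ in the shape claimed in the statement. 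The transformation properties of $W_{\pi_p}$ under $Z$ and $N$ then give $W_{\pi_p}(a(\xi)\sigma_p)=\omega_{\pi_p}(\alpha_p)\,\psi_p(x'_p)\,W_{\pi_p}(g_{r_p-d_{\pi_p}(l_p),l_p,u_p})$, and the bookkeeping of $\alpha_p,x'_p$ should collapse the scalar prefactor into $\omega_{\pi_p}(r_0)\,\psi_p(-rd/(\delta(\a)L))$.

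Finally, assembling the ramified and unramified products and invoking the global triviality of $\psi=\prod_v\psi_v$ on $\Q$, which yields $\prod_{p\mid N}\psi_p(-rd/(\delta(\a)L))=e(rd/(\delta(\a)L))$ since $\psi_{p'}(-rd/(\delta(\a)L))=1$ whenever $p'\nmid N$ (because $v_{p'}(\delta(\a)L)=0$), recovers precisely \eqref{e:formfour}. The main obstacle is the local matrix decomposition at each ramified prime: matching simultaneously the correct $\varpi$-exponent, the class of the unit $u_p\in\Z_p^\times/U_{l_p}$, and the scalar $\omega_{\pi_p}(\alpha_p)\psi_p(x'_p)$ with exactly the quantities $\omega_{\pi_p}(r_0)\psi_p(-rd/(\delta(\a)L))$ requires somewhat delicate tracking of the $p$-adic valuations coming from $\delta(\a)=[L^2,N,LM]/L^2$.
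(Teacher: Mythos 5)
Your overall strategy is the one the paper uses: evaluate $W_\phi$ at $a(r/\delta(\a))g_z\iota_{\f}(\sigma)$, factor via \eqref{eq:W-factorisation}, handle $p\nmid N$ through the Fourier expansion at $\infty$, and reduce each $p\mid N$ to a decomposition of $a(\xi)\sigma_p$ into $ZN\,g_{t,l,v}K_{1,p}(n_p)$. The paper carries out the ramified step with the one-line Bruhat factorization $\sigma=z(L)\,n(-d/L)\,a(1/L^{2})\,w\,n(-a/L)$, after which only a diagonal matrix $\left(\begin{smallmatrix}1&0\\0&c\end{smallmatrix}\right)\in K_{1,p}(n_p)$ with $c$ a unit needs to be peeled off; your two-step Iwasawa procedure would land in the same place. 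Your preliminary identity $W_{\pi_p}(a(u))=\omega_{\pi_p}(u)$ for units $u$ and the resulting evaluation $\prod_{p\nmid N}W_{\pi_p}(a(p^{v_p(r)}))=(a_f(r_0)/r_0^{k/2})\prod_{p\mid N}\omega_{\pi_p}(r_0)^{-1}$ are correct.

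The gap is precisely at the step you flag as delicate and do not carry out: the scalar prefactor at $p\mid N$. Performing the decomposition (e.g.\ via the Bruhat factorization above) one finds
\[
W_{\pi_p}(a(\xi)\sigma_p)=\omega_{\pi_p}(L)\,\psi_p\!\left(\tfrac{-rd}{\delta(\a)L}\right)W_{\pi_p}(g_{r_p-d_{\pi_p}(l_p),\,l_p,\,u_p}),
\]
so the central-character contribution is $\omega_{\pi_p}(L)$, not $\omega_{\pi_p}(r_0)$: nothing in the matrix identity produces the unit $r_0$. (The prefactor is forced: whenever $W_{\pi_p}(g_{t,l,u_p})\neq 0$, any two decompositions with the same $t$, $l$ and the same $u_p$ modulo $U_{l}$ must yield the same scalar.) Since $\prod_{p\mid N}\omega_{\pi_p}(L)=1$ while $\prod_{p\mid N}\omega_{\pi_p}(r_0)=\prod_{p\mid r_0}\omega_{\pi_p}(p^{v_p(r_0)})^{-1}$ is nontrivial in general (whenever the nebentypus is ramified at some $p\mid N$), the cancellation your final assembly relies on fails, and your argument proves \eqref{e:formfour} only up to the root of unity $\prod_{p\mid N}\omega_{\pi_p}(r_0)^{-1}$. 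This is harmless for the vanishing applications (Corollary \ref{keycor}) and vacuous for trivial character; indeed the paper's own proof makes the mirror-image simplification when it asserts $W_{\pi_p}(a(r_0))=1$ for $p\mid N$, $p\nmid r_0$, where the value is really $\omega_{\pi_p}(r_0)$. But as a verification of the stated identity, your claim that the prefactor collapses to $\omega_{\pi_p}(r_0)\psi_p(-rd/(\delta(\a)L))$ is both unproven and, as asserted, incorrect: you must compute $\alpha_p$ and $x_p'$ from the decomposition itself rather than infer them from the cancellation you need.
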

\begin{proof}
Applying \eqref{eq:W-factorisation} to Proposition \ref{prop:whittaker-coeffs} (taking $z=i$) we determine
\begin{equation*}\begin{array}{rcl}\vspace{0.1in}
a_{f}(r;\a)
&=&\displaystyle e\left(\dfrac{ri}{\delta(\a)}\right)\, W_{\infty}(a(r/\delta(\a)))\prod_{p<\infty}W_{\pi_p}(a(r/\delta(\a))\sigma)\\
&=&\displaystyle a_{f}(1)\,\bigg(\dfrac{r}{\delta(\a)}\bigg)^{k/2}\prod_{p\mid r_{0}}W_{\pi_p}(a(r_{0}))\ \prod_{p\mid N}W_{\pi_p}(a(r/\delta(\a))\sigma).
\end{array}
\end{equation*}
Considering the $r_{0}$-th Fourier coefficient in the expansion at $\a=\infty$, (so that $\delta(\infty)=1$, and each $W_{\pi_p}$ is right invariant by $\iota_p(\sigma)$); this formula simplifies to
\begin{equation*}
a_{f}(r_{0})=a_{f}(1)r_{0}^{k/2}\prod_{p\mid r_{0}}W_{\pi_p}(a(r_{0}))
\end{equation*}
since $W_{\pi_p}(a(r_{0}))=1$ for $p\nmid r_{0}$. Comparing the formulas for $a_{f}(r;\a)$ and $a_{f}(r_{0})$, we see
\begin{equation}\label{e:initial}
a_{f}(r;\a)=\dfrac{a_{f}(r_{0})}{r_{0}^{k/2}}\,\left(\dfrac{r}
{\delta(\a)}\right)^{k/2}\,\prod_{p\mid N}W_{\pi_p}(a(r/\delta(\a))\sigma).
\end{equation}
%\dfrac{\prod_{p\mid N}W_{\pi_p}(a(r/\delta(\a))\sigma)}{\delta(\a)^{k/2}}.

Let us decompose this expression on the right. First consider the Bruhat decomposition of $\sigma$:
\begin{equation*}
\sigma=\begin{pmatrix}
d&-b\\-L&a
\end{pmatrix}=z(L)n(-d/L)a(1/L^{2})wn(-a/L).
\end{equation*}
This gives us, for each $p|N$,
\begin{equation}\label{eq:threefourths}\begin{split}
W_{\pi_p}\left(a(r/\delta(\a))\sigma\right)
&=\omega_{\pi_p}(L)\,\psi_{p}\left(\dfrac{-rd}
{\delta(\a)L}\right)\,W_{\pi_p}
\left(a\left(\dfrac{r}{L^{2}\delta(\a)}\right)w\,n\left(\dfrac{-a}{L}\right)\right)
\\&=\omega_{\pi_p}(L)\,\psi_{p}\left(\dfrac{-rd}{\delta(\a)L}\right)W_{\pi_p}\left(a(p^{r_{p}-d_{\pi_{p}}(l_{p})})\,w\,
\mat{1}{-a/L}{}{\frac{rp^{d_{\pi_{p}}(l_{p})}}{L^2 \delta(\a)p^{r_p}}} \right) \\&= \omega_{\pi_p}(L)\,\psi_{p}\left(\dfrac{-rd}{\delta(\a)L}\right) W_{\pi_p}\left(a(p^{r_{p}-d_{\pi_{p}}(l_{p})})\,w\,n
\left(u_{p} p^{-l_{p}}\right)\right).\end{split}
\end{equation}
where we have used the right-$K_{1,p}(N)$-transformation property of $W_{\pi_p}$. Substituting \eqref{eq:threefourths} into \eqref{e:initial}, we obtain
\begin{equation}
\dfrac{a_{f}(r;\a)}{r^{k/2}}=\left(\prod_{p\mid N}\psi_{p}\left(\dfrac{-rd}
{\delta(\a)L}\right) \omega_{\pi_p}(L) \right) \dfrac{a_{f}(r_{0})}{r_{0}^{k/2}}\,\dfrac{\prod_{p\mid N}W_{\pi_p}(g_{r_{p}-d_{\pi_{p}}(l_{p}),l_{p},u_{p}})}{\delta(\a)^{k/2}}.
\end{equation}
Our desired result \eqref{e:formfour} now follows from  the equalities $$\prod_{p\mid N}\psi_{p}(-rd/(\delta(\a)L))= \prod_{p<\infty} \psi_{p}(-rd/(\delta(\a)L))= e(rd/\delta(\a)L)$$ and $$\prod_{p\mid N}\omega_{\pi_p}(L)= \prod_{p<\infty}\omega_{\pi_p}(L)= 1.$$
\end{proof}
\subsubsection{Global to local reduction of the vanishing index}\label{sec:global-to-local-index}
\begin{corollary}\label{keycor}
Let $L=\prod_p p^{l_p}$, $r= r_0\prod_{p|N}p^{r_p}$ be positive integers with $L|N$, $(r_0, N)=1$.

\begin{enumerate}
\item If $r_p < e_{\pi_p}(l_p)$ for some prime $p|N$, then $a_f(r, \a) = 0$ for all cusps $\a = \frac{a}{L}$, $(a,N)=1$.
\item Let $r = \prod_{p|N} p^{e_{\pi_p}(l_p)}$. Then there exists some cusp $\a = \frac{a}{L}$, $(a,N)=1$, such that $a_f(r, \a) \neq  0$.

\end{enumerate}

\end{corollary}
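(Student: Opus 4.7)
The plan is to deduce the corollary directly from the product formula \eqref{e:formfour} in Proposition \ref{propformulafourier}, combined with the definition of the local vanishing index $e_{\pi_p}(l_p)$.

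For part (1), observe that since $a_f(r,\a) \neq 0$ if and only if every factor in the product formula is nonzero (the prefactor $e(rd/(\delta(\a)L))/\delta(\a)^{k/2}$ is clearly nonzero, and $a_f(r_0)/r_0^{k/2}$ does not involve the cusp), we may focus on the local factors $W_{\pi_p}(g_{r_p-d_{\pi_p}(l_p),\,l_p,\,u_p})$. If $r_p < e_{\pi_p}(l_p)$ for some $p \mid N$, then by the very definition of $e_{\pi_p}(l_p)$ in Definition \ref{def_local_ram_index}, we have $W_{\pi_p}(g_{t,l_p,v}) = 0$ for every $t < e_{\pi_p}(l_p) - d_{\pi_p}(l_p)$ and every $v \in \OF^\times$. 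Setting $t = r_p - d_{\pi_p}(l_p)$ and $v = u_p$ makes the $p$-th local factor vanish for every choice of cusp $\a$ of denominator $L$, forcing $a_f(r,\a) = 0$.

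For part (2), we take $r = \prod_{p\mid N} p^{e_{\pi_p}(l_p)}$, so that $r_0 = 1$ and $r_p = e_{\pi_p}(l_p)$ for each $p\mid N$. Then $a_f(r_0)/r_0^{k/2} = a_f(1) = 1 \neq 0$ by our normalisation, and the task reduces to exhibiting some $a \in \Z$ with $(a,N)=1$ for which
\[
\prod_{p\mid N} W_{\pi_p}\!\left(g_{e_{\pi_p}(l_p)-d_{\pi_p}(l_p),\,l_p,\,u_p}\right) \neq 0.
\]
By the definition of $e_{\pi_p}(l_p)$, for each $p\mid N$ there is some $v_p \in \Z_p^\times$ for which the local factor at $p$ is nonzero when $u_p$ equals $v_p$; moreover, the remark in \S\ref{sec_evaluate_Whittaker} recalls that $W_{\pi_p}(g_{t,l,v})$ depends on $v$ only through its class in $\Z_p^\times/U_{l_p}$. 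So it suffices to arrange $u_p \equiv v_p \pmod{U_{l_p}}$ for each $p\mid N$.

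Inspecting the formula
\[
u_p = -a\cdot\frac{p^{r_p}}{r}\cdot\frac{[L,M,N/L]}{p^{d_{\pi_p}(l_p)-l_p}} \in \Z_p^\times,
\]
we note that every factor besides $a$ is a fixed element of $\Z_p^\times$. Thus varying $a$ over $(\Z/p^{l_p}\Z)^\times$ causes $u_p \bmod U_{l_p}$ to range over all of $\Z_p^\times/U_{l_p}$ bijectively. By the Chinese Remainder Theorem applied to the finite product $\prod_{p\mid N}(\Z/p^{\max(l_p,1)}\Z)^\times$ (imposing $a \not\equiv 0 \pmod p$ at the primes $p\mid N$ with $l_p = 0$ to preserve coprimality to $N$), we may simultaneously choose $a$ coprime to $N$ realising the prescribed residue at each $p\mid N$. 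For this $a$, every local factor in the product is nonzero, and so $a_f(r,\a) \neq 0$ for the corresponding cusp $\a = a/L$. The only subtle point, which is settled by the bijectivity just noted, is verifying that the ``admissible'' values of $u_p$ as $a$ ranges over $(\Z/N\Z)^\times$ really do exhaust all residue classes modulo $U_{l_p}$; this is the main technical step.
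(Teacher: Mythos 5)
Your proposal is correct and follows essentially the same route as the paper: both parts are read off from the product formula \eqref{e:formfour}, with part (1) using the vanishing of the local Whittaker newform for all $v$ when $r_p<e_{\pi_p}(l_p)$, and part (2) using the existence of a good $v_p$ at each $p\mid N$ together with the Chinese Remainder Theorem to choose $a$ so that each $u_p$ lands in the right class. Your extra observation that $u_p$ equals $-a$ times a fixed unit, so that varying $a$ surjects onto $\Z_p^\times/U_{l_p}$, is exactly the (implicit) justification behind the paper's CRT step.
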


\begin{proof}
Both parts follow immediately from Proposition \ref{propformulafourier}. Indeed, if $r_p < e_{\pi_p}(l_p)$ for some prime $p|N$, then $W_{\pi_p}(g_{r_p - d_{\pi_p}(l_p), l_p, v}) = 0$ for all $v \in \Z_p^\times$ and so \eqref{e:formfour} implies that $a_f(r;\a) = 0$. On the other hand, if $r = \prod_{p|N} p^{e_{\pi_p}(l_p)}$, then by the definition of $e_{\pi_p}(l_p)$, there exists for each $p|N$ some $v_p \in \Z_p^\times$ such that $W_{\pi_p}(g_{r_p - d_{\pi_p}(l_p), l_p, v_p}) \neq 0$. By the Chinese remainder theorem, we can now choose some $a$ coprime to $N$ satisfying $-a \times \frac{p^{r_p+l_p}}{Lr} \times \frac{[L^2, N, LM]}{p^{d_{\pi_p}(l_p)}} \equiv  v_p  \pmod{N}$ for all $p|N$. Therefore, by \eqref{e:formfour}, we have $a_f(r, \a) \neq  0$ (recall that $a_f(1) \neq 0$).
\end{proof}

Define the quantities
$$e_f(\a) :=  \min\{n>0: a_f(n;\a) \neq 0 \}, \quad e_f(L) = \min_{a \in (\Z/N\Z)^\times} e_f(a/L).$$
We can now prove our main result (stated as Theorem \ref{t:main2} in the introduction).

\begin{theorem}
For any integer $L= \prod_p p^{l_p}$ dividing $N$, we have $$e_f(L) = \prod_p p^{e_{\pi_p}(l_p)}.$$
\end{theorem}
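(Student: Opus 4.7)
The theorem follows essentially directly from Corollary \ref{keycor}, which has already reduced the problem to local data. The plan is to establish the equality by proving matching upper and lower bounds for $e_f(L)$.

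For the upper bound, I would set $r = \prod_{p\mid N} p^{e_{\pi_p}(l_p)}$. Since $e_{\pi_p}(l_p) = 0$ at every prime $p \nmid N$ (the newform $W_{\pi_p}$ is spherical at such primes, so there is no extra vanishing), this $r$ coincides with $\prod_p p^{e_{\pi_p}(l_p)}$. Part (2) of Corollary \ref{keycor} then produces some cusp $\a = a/L$ with $(a,N)=1$ at which $a_f(r;\a) \neq 0$. In particular $e_f(\a) \leq r$, and by definition of $e_f(L)$ as the minimum of $e_f(\a)$ over cusps of denominator $L$, we conclude $e_f(L) \leq \prod_p p^{e_{\pi_p}(l_p)}$.

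For the lower bound, let $r$ be any positive integer with $r < \prod_p p^{e_{\pi_p}(l_p)}$. Write $r = r_0 \prod_{p\mid N} p^{r_p}$ with $(r_0, N) = 1$. If we had $r_p \geq e_{\pi_p}(l_p)$ for every $p\mid N$, then since $r_0 \geq 1$ we would obtain $r \geq \prod_{p\mid N} p^{e_{\pi_p}(l_p)} = \prod_p p^{e_{\pi_p}(l_p)}$, contradicting the choice of $r$. Hence there exists some $p\mid N$ with $r_p < e_{\pi_p}(l_p)$, and part (1) of Corollary \ref{keycor} forces $a_f(r;\a) = 0$ for \emph{every} cusp $\a = a/L$ with $(a,N)=1$. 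Consequently $e_f(\a) \geq \prod_p p^{e_{\pi_p}(l_p)}$ for all such $\a$, giving $e_f(L) \geq \prod_p p^{e_{\pi_p}(l_p)}$.

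Combining the two bounds yields the desired equality. There is no real obstacle at this stage: all the substantive work has already been carried out, first in the local analysis that produced the explicit formula for $e_{\pi_p}(l_p)$ (Theorem \ref{thm_andys_theorem}), and then in Proposition \ref{propformulafourier}, which factorized the global Fourier coefficient $a_f(r;\a)$ as an adelic product of local Whittaker values $W_{\pi_p}(g_{r_p - d_{\pi_p}(l_p), l_p, u_p})$. The Chinese remainder step inside the proof of Corollary \ref{keycor}, which guarantees that the local parameters $u_p$ can be realized simultaneously by a single global $a$ coprime to $N$, is what makes the global minimum match the product of local vanishing indices on the nose.
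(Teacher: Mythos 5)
Your proof is correct and is exactly the argument the paper intends: the paper's own proof of this theorem is the single line "follows immediately from Corollary \ref{keycor}," and your two-bound argument (part (2) for the upper bound, part (1) for the lower bound, plus the observation that $e_{\pi_p}(l_p)=0$ for $p\nmid N$) is precisely the routine verification being elided.
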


\begin{proof}
The proof follows immediately from Corollary \ref{keycor}.
\end{proof}

\subsection{Proofs of Proposition \ref{prop:rational} and Theorem \ref{t:main}}\label{s:globalrational}

We begin by proving (a slightly stronger version of) Proposition \ref{prop:rational}. Let $\pi' = \pi$ if $k$ is even and $\pi' = \pi |\cdot |^{1/2}$ if $k$ is odd (here, $| \cdot |$ denotes the adelic norm, which is just the product of all the local norms). Then, as noted in \cite{raghutanabe}, $\pi'$ is a regular algebraic cuspidal automorphic representation (note however, that $\pi'$ is no longer unitary if $k$ is odd). In particular, if we define $\pi'_p = \pi_p$ if $k$ is even and $\pi'_p = \pi_p | \cdot |^{1/2}$ if $k$ is odd, so that $\pi' = \otimes_v \pi'_v$, then the compositum $\Q(\pi')$ of all the fields $\Q(\pi'_p)$ with $p \in S_{\f}$ is a number field (see \cite{raghutanabe}). In fact $\Q(\pi')=\Q(f)$ where $\Q(f)$ is the number field generated by all the Fourier coefficients $a_f(n)$ (see part (5) of Theorem 1.4 of \cite{raghutanabe}).

Let us define $K_N$ to be the compositum of all the fields $\Q(\pi'_p)$ over the primes $p$ such that $p^2|N$. Clearly $K_N$ is a subfield of $\Q(f)$.

\begin{proposition}\label{prop:globalrational}
Suppose for some divisor $L=\prod_p p^{l_p}$ of $N$ we have $K_N \cap \Q(e^{\frac{2 \pi i}{(L, N/L)}}) = \Q$. Then $e_f(\a) = e_f(L)$.
\end{proposition}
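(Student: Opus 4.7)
The plan is to reduce to the purely local statement provided by Proposition \ref{prop:localrational}, using the product formula of Proposition \ref{propformulafourier} together with Corollary \ref{keycor}. Concretely, by Corollary \ref{keycor}(2) there is at least one cusp $\a_0 = a_0/L$ of denominator $L$ with $a_f(r;\a_0) \ne 0$ for $r = \prod_p p^{e_{\pi_p}(l_p)}$, so $e_f(\a_0) = e_f(L)$. On the other hand, Corollary \ref{keycor}(1) shows $a_f(s;\a) = 0$ for every cusp $\a$ of denominator $L$ and every positive $s < r$. Thus it suffices to prove that $a_f(r;\a) \ne 0$ for every cusp $\a = a/L$ with $(a,N)=1$. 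Via \eqref{e:formfour}, this will follow as soon as we show that for every prime $p\mid N$ and every $u_p \in \Z_p^\times$,
\begin{equation*}
W_{\pi_p}\bigl(g_{e_{\pi_p}(l_p)-d_{\pi_p}(l_p),\,l_p,\,u_p}\bigr) \ne 0.
\end{equation*}

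For primes $p \mid N$ with $n_p \le 1$, Proposition \ref{basic2prop} gives $e_{\pi_p}(l_p)=0$, and $\min\{l_p, n_p-l_p\}=0$, so $|W_{\pi_p}(g_{t,l_p,v})|$ is independent of $v \in \Z_p^\times$; hence non-vanishing at the minimal $t$ for some $v$ implies non-vanishing for all $v$. Thus the only serious case is $p^2 \mid N$, where I would invoke Proposition \ref{prop:localrational}. To apply it, set $s = 0$ if $k$ is even and $s = 1/2$ if $k$ is odd, so that $\pi' = \pi|\cdot|^s$ is the (cohomologically normalised) regular algebraic cuspidal representation for which each $\Q(\pi_p')$ is a number field (per the discussion at the start of \S\ref{s:globalrational}). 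Since $\Q(\pi_p') \subset K_N$ for every $p$ with $p^2 \mid N$, and since $v_p\bigl((L,N/L)\bigr) = \min\{l_p, n_p-l_p\}$ gives the inclusion $\Q\bigl(\mu_{p^{\min\{l_p, n_p-l_p\}}}\bigr) \subset \Q\bigl(\mu_{(L, N/L)}\bigr)$, the hypothesis $K_N \cap \Q(\mu_{(L,N/L)}) = \Q$ forces
\begin{equation*}
\Q(\pi_p') \cap \Q\bigl(\mu_{p^{\min\{l_p,n_p-l_p\}}}\bigr) = \Q.
\end{equation*}
Proposition \ref{prop:localrational} then yields the required non-vanishing for all $u_p \in \Z_p^\times$.

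Combining the local statements over the primes dividing $N$, for every cusp $\a = a/L$ with $(a,N)=1$ the product in \eqref{e:formfour} is non-zero at $r = \prod_p p^{e_{\pi_p}(l_p)}$, hence $a_f(r;\a) \ne 0$ and so $e_f(\a) \le r$. Combined with the lower bound from Corollary \ref{keycor}(1) we conclude $e_f(\a) = r = e_f(L)$.

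The only slightly delicate point, and the piece most worth double-checking, is the correct choice of twist parameter $s$ so that the local fields of rationality $\Q(\pi_p')$ assemble into the number field $K_N$ that appears in the hypothesis; everything else is bookkeeping with the formula \eqref{e:formfour} and the observation $(L,N/L) = \prod_p p^{\min\{l_p, n_p-l_p\}}$. Note that Proposition \ref{prop:rational} follows at once, since $K_N$ is a subfield of $\Q(f)$ and $(L, N/L) \mid (L, N/L)$ gives $\Q(\mu_{(L,N/L)}) \subset \Q(e^{2\pi i/(L,N/L)})$, so the hypothesis of Proposition \ref{prop:rational} implies that of Proposition \ref{prop:globalrational}.
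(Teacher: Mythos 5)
Your proof is correct and follows exactly the paper's route: reduce via the product formula of Proposition \ref{propformulafourier} to the non-vanishing of $W_{\pi_p}(g_{e_{\pi_p}(l_p)-d_{\pi_p}(l_p),l_p,v})$ for all $v\in\Z_p^\times$, handle $p^2\nmid N$ by Proposition \ref{basic2prop} (with the observation that $|W_{\pi_p}|$ is then independent of $v$), and handle $p^2\mid N$ by Proposition \ref{prop:localrational} after checking that the hypothesis on $K_N$ localises correctly. The paper's own proof is just a terser version of the same argument, so no further comment is needed.
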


\begin{proof}
Using Proposition \ref{propformulafourier}, it suffices to show that $W_{\pi_p}(g_{e_{\pi_p}(l_p) - d_{\pi_p}(l_p), l_p, v}) \neq 0$ for all $v \in \Z_p^\times$. If $p^2 \nmid N$, this follows from Proposition \ref{basic2prop}. If $p^2|N$, this follows from Proposition \ref{prop:localrational}.
\end{proof}

Note that the above Proposition implies Proposition \ref{prop:rational} as $K_N\subseteq \Q(f)$.
Next, let $f$, $\pi$ be such that $f$ is the newform associated to an elliptic curve $E$ over $\Q$. In particular, $f$ has trivial character ($M=1$), $k=2$, and the Fourier coefficients $a_f(n)$ are all rational numbers.

\begin{lemma}
In the present case, the $e_p$ of Theorem \ref{t:main} is equal to the $e_{\pi_p}(l_p)$ of Theorem \ref{t:main2}.
\end{lemma}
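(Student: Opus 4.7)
The plan is a direct case-by-case verification, reconciling Theorems \ref{t:main} and \ref{t:main2} under the two hypotheses specific to elliptic curves: trivial central character $\omega_\pi = 1$, and weight $k=2$. The trivial central character forces, in any principal series component $\pi_p \simeq \chi_1 \boxplus \chi_2$, that $\chi_2 = \chi_1^{-1}$, so the crucial quantity $a(\chi_1 \chi_2^{-1})$ appearing in Theorem \ref{t:main2} reduces to $a(\chi_1^2)$; and in any $\chi\St$ component it forces $\chi^2=1$. Alongside this, I will use the Atkin--Li theorem (quoted just before Definition \ref{defminimal}) which, when specialised to trivial central character at $p=2$, says that a supercuspidal of even conductor $\ge 4$ is never minimal. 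I also use the standard bounds $n_p \le 2$ for $p \ge 5$, $n_3 \le 5$, $n_2 \le 8$ on the conductor exponent of an elliptic curve.

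For $p \ge 5$ both theorems return $0$. For $p=3$, the bound $n_3 \le 5$ together with parity forces any principal series with trivial central character and $n_3 \ge 4$ to satisfy $n_3 = 4$ and $a(\chi_1)=2$; since $U_1/U_2 \cong \mathbb{F}_3$ has odd order, any non-trivial $\chi_1|_{U_1}$ has order $3$, so squaring preserves it, yielding $a(\chi_1^2) = 2 = l_3$ and placing us in case (i) of Theorem \ref{t:main2}, matching $e_3 = 1$. All other local types at $p=3$ fall outside Theorem \ref{t:main2}'s exceptional list and give $e_{\pi_3}(l_3)=0$, matching the trivial case of Theorem \ref{t:main}.

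For $p=2$, case (ii) of Theorem \ref{t:main} ($n_2 \le 2$ or $n_2 \ne 2 l_2$) is immediate: every non-vanishing case of Theorem \ref{t:main2} requires $n_p \ge 4$ and $n_p = 2 l_p$, with the sole exception of its case (ii), which demands $a(\chi_1)\ne a(\chi_2)$ and therefore cannot occur under our hypothesis. The remaining regime $n_2 = 2 l_2 \ge 4$ is split according to the type of $\pi_2$.

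The most delicate step, and the main obstacle, is the principal series case at $p=2$: one must pin down $a(\chi_1^2)$ as a function of $n_2\in\{4,6,8\}$. For this I use the explicit structure of the filtration $\{U_k\}$ at $p=2$. Since $U_1 = \mathbb{Z}_2^\times$ and a direct calculation shows the squaring map carries $U_1$ into $U_3$, any $\chi_1$ with $a(\chi_1)\in\{2,3\}$ automatically satisfies $\chi_1^2 \equiv 1$ on $U_1$, so $a(\chi_1^2)=0 < l_2-1$ and case (v) of Theorem \ref{t:main2} gives $e_{\pi_2}(l_2)=2$. For $a(\chi_1)=4$, one checks (for example via $5^2 = 25 \equiv 9 \pmod{16}$) that $U_2/U_4 \cong \mathbb{Z}/4\mathbb{Z}$ is cyclic; since $\chi_1|_{U_3}$ is the unique non-trivial character of $U_3/U_4$, its extension $\chi_1|_{U_2}$ is forced to have order $4$, whence $a(\chi_1^2)=3=l_2-1$ and case (vii) yields $e_{\pi_2}(l_2)=3$. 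These match cases (v) and (iv) of Theorem \ref{t:main} respectively. For the Steinberg component $\chi\St$ with $a(\chi)=l_2\ge 2$, case (iv) of Theorem \ref{t:main2} gives $e_{\pi_2}(l_2)=2$. Finally, for a supercuspidal $\pi_2$, Atkin--Li guarantees $\pi_2 \simeq \chi \pi_0$ with $\pi_0$ minimal of odd conductor or $a(\pi_0)=2$; the conductor formula of Lemma \ref{lem_wal} then forces $a(\chi)=n_2/2$, and depending on whether $a(\pi_0)=n_2-1$ or $a(\pi_0)\le n_2-2$, we fall in case (iii) or case (vi) of Theorem \ref{t:main2}, giving $e_{\pi_2}(l_2)=1$ or $2$ and matching Theorem \ref{t:main} cases (iii) and (v).
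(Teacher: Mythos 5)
Your argument proves a different statement from the one this lemma carries in the paper's logical structure, and it omits the one idea the paper's proof of the lemma actually consists of. Theorem \ref{t:main2} computes $e_f(L)=\min_{\a} e_f(\a)$, the \emph{minimum} of the vanishing orders over all cusps of denominator $L$, whereas Theorem \ref{t:main} asserts a formula for $e_\varphi(\a)=e_f(\a)$ at each \emph{individual} cusp. The lemma is the bridge between these: its content is that $e_f(\a)=e_f(L)$ for every cusp $\a$ of denominator $L$, so that the exponent $e_p$ in the true factorization of $e_\varphi(\a)$ coincides with the local vanishing index $e_{\pi_p}(l_p)$. The paper proves this by invoking Proposition \ref{prop:globalrational} (hence Proposition \ref{prop:localrational}, the Galois-twisting argument on local Whittaker newforms), using that $\Q(f)=\Q$ because the $a_f(n)$ are rational. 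Your proposal never addresses this uniformity across cusps; as it stands, your case analysis combined with Theorem \ref{t:main2} only yields that \emph{some} cusp of denominator $L$ has ramification index $\prod_p p^{e_p}$. This is not a removable formality: the paper's $N=567$ example, where $e_f(1/9)=3$ but $e_f(4/9)=6$, shows that the per-cusp statement genuinely fails once the rationality hypothesis is dropped.

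What you have written is, instead, a proof of the case-by-case reconciliation of the two explicit lists, which in the paper is carried out in the paragraph \emph{following} the lemma ("We now give the proof of how Theorem \ref{t:main2} implies Theorem \ref{t:main}..."). That part of your argument is correct, and in places more carefully justified than the paper's version — notably your verification that $a(\chi_1^2)=l_3$ automatically at $p=3$ (via the odd order of $U_1/U_2$), your computation that $a(\chi_1^2)=0$ for $a(\chi_1)\in\{2,3\}$ and $a(\chi_1^2)=3$ for $a(\chi_1)=4$ over $\Q_2$, and your use of Lemma \ref{lem_wal} to force $a(\chi)=n_2/2$ in the supercuspidal case. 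But to prove the lemma you must add the rationality step: cite Proposition \ref{prop:globalrational} (or \ref{prop:rational}) with $\Q(f)=\Q$ to conclude $e_f(\a)=e_f(L)$ before the list-matching can be applied to an individual cusp.
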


\begin{proof}
Since the $a_f(n)$ are all rational numbers, the representation $\pi$ satisfies the hypothesis of Proposition \ref{prop:globalrational}. We consequently have $e_f(\a)=e_f(L)$ for any cusp $\a$ of denominator $L$, in particular for the $f$ attached to $E$.
\end{proof}

We now give the proof of how Theorem \ref{t:main2} implies Theorem \ref{t:main}. The fact that $f$ now has trivial character implies that if $\pi_p$ is a principal series representation, then it must be of the form $\chi \boxplus \chi^{-1}$. Furthermore it is known (see, e.g., \cite[Theorem 10.4]{silverman-advanced}) that the exponents $n_p = v_p(N)$ have the following bounds: $n_p \le 2$ if $p \ge 5$, $n_3 \le 5$, and $n_2 \le 8$. Now, one is left to rattle through the short (finite) list of possible entrants into Theorem \ref{t:main} (that is, which representations occur that satisfy the aforementioned bounds), with index $e_{p}$ given in Theorem \ref{t:main2}. Indeed, let $p=3$. Then, by Theorem \ref{t:main2}, and the bound $n_3 \le 5$, we see that $e_3 = 0$ unless $n_3 = 4$, $l_3=2$ and $\pi_3$ is a principal series representation, which is exactly Case (i) of Theorem \ref{t:main}. Next, let $p=2$. Case (ii) of Theorem \ref{t:main2} cannot occur in our present situation because $f$ has trivial character. Case (iii) of Theorem \ref{t:main2} exactly corresponds to Case (iii) of Theorem \ref{t:main}. Case (vii) of Theorem \ref{t:main2} corresponds to Case (iv) of Theorem \ref{t:main} (observe that there is no character $\chi$ of $\Q_2^\times$ satisfying $a(\chi) = 3$, $a(\chi^2)=2$, so we cannot have $n_3=6$). If we are not in any of the Cases (ii--vii) of Theorem \ref{t:main2}, then  we must have either $n_2 \le 2$ or $n_2 \ne 2 v_2(L)$, which corresponds to Case (ii) of Theorem \ref{t:main}. Finally, Cases (iv--vi) of Theorem \ref{t:main2} correspond to Case (v) of Theorem \ref{t:main}. This completes the proof that Theorem \ref{t:main2} implies Theorem \ref{t:main}.

\subsection*{Acknowledgements}

A.S. thanks Fran\c{c}ois Brunault, Paul Nelson, and Ameya Pitale for many interesting and useful discussions in July 2012 on the topic of this paper. The authors thank Kevin Buzzard, Kestutis Cesnavicius and Ralf Schmidt for feedback, as well as the anonymous referee for many useful suggestions which have improved the paper.

\bibliographystyle{amsalpha}	% stylechoice: amsplain or amsalpha
\bibliography{refs-vanishing}

\providecommand{\bysame}{\leavevmode\hbox to3em{\hrulefill}\thinspace}
\providecommand{\MR}{\relax\ifhmode\unskip\space\fi MR }
% \MRhref is called by the amsart/book/proc definition of \MR.
\providecommand{\MRhref}[2]{%
  \href{http://www.ams.org/mathscinet-getitem?mr=#1}{#2}
}
\providecommand{\href}[2]{#2}
\begin{thebibliography}{BCDT01}

\bibitem[AL78]{atkin-li}
A.~Atkin and W.~Li, \emph{Twists of newforms and pseudo-eigenvalues of
  {$W$}-operators}, Invent. Math. \textbf{48} (1978), no.~3, 221--243.

\bibitem[Ass18]{assing-local}
Edgar Assing, \emph{On the size of p-adic {W}hittaker functions}, to appear in
  Trans. Amer. Math. Soc. (2018).

\bibitem[BCDT01]{bcdt}
C.~Breuil, B.~Conrad, F.~Diamond, and R.~Taylor, \emph{On the modularity of
  elliptic curves over {$\bold Q$}: wild 3-adic exercises}, J. Amer. Math. Soc.
  \textbf{14} (2001), no.~4, 843--939 (electronic).

\bibitem[Bru16]{brunault}
F.~Brunault, \emph{On the ramification of modular parametrizations at the
  cusps}, J. Th\'eor. Nombres Bordeaux \textbf{28} (2016), no.~3, 773--790.

\bibitem[Del05]{delaunay}
C.~Delaunay, \emph{Critical and ramification points of the modular
  parametrization of an elliptic curve}, J. Th\'eor. Nombres Bordeaux
  \textbf{17} (2005), no.~1, 109--124.

\bibitem[Gel75]{gelbart}
S.~Gelbart, \emph{Automorphic forms on ad\`ele groups}, Princeton University
  Press, Princeton, N.J. and University of Tokyo Press, Tokyo, 1975, Annals of
  Mathematics Studies, No. 83.

\bibitem[Hu17]{hu-subconvex}
Yueke Hu, \emph{Triple product formula and the subconvexity bound of triple
  product {$L$}-function in level aspect}, Amer. J. Math. \textbf{139} (2017),
  no.~1, 215--259. \MR{3619914}

\bibitem[JL70]{jacquet-langlands}
H.~Jacquet and R.~Langlands, \emph{Automorphic forms on {${\rm GL}(2)$}},
  Lecture Notes in Mathematics, Vol. 114, Springer-Verlag, Berlin-New York,
  1970.

\bibitem[Li79]{Li79}
W.~Li, \emph{{$L$}-series of {R}ankin type and their functional equations},
  Math. Ann. \textbf{244} (1979), no.~2, 135--166.

\bibitem[LW12]{loeffler-weinstein}
D.~Loeffler and J.~Weinstein, \emph{On the computation of local components of a
  newform}, Math. Comp. \textbf{81} (2012), no.~278, 1179--1200.

\bibitem[MSD74]{mazswin}
B.~Mazur and P.~Swinnerton-Dyer, \emph{Arithmetic of {W}eil curves}, Invent.
  Math. \textbf{25} (1974), 1--61.

\bibitem[PSS14]{transfer}
A.~Pitale, A.~Saha, and R.~Schmidt, \emph{Transfer of {S}iegel cusp forms of
  degree 2}, Mem. Amer. Math. Soc. \textbf{232} (2014), no.~1090, vi+107.
  \MR{3243731}

\bibitem[RT11]{raghutanabe}
A.~Raghuram and N.~Tanabe, \emph{Notes on the arithmetic of {H}ilbert modular
  forms}, J. Ramanujan Math. Soc. \textbf{26} (2011), no.~3, 261--319.

\bibitem[RV99]{ramval}
D.~Ramakrishnan and R.~Valenza, \emph{Fourier analysis on number fields},
  Graduate Texts in Mathematics, vol. 186, Springer-Verlag, New York, 1999.

\bibitem[Sah15]{sahapet}
Abhishek Saha, \emph{On ratios of {P}etersson norms for {Y}oshida lifts}, Forum
  Math. \textbf{27} (2015), no.~4, 2361--2412. \MR{3365801}

\bibitem[Sah16]{saha-large-values}
A.~Saha, \emph{Large values of newforms on {${\rm GL}(2)$} with highly ramified
  central character}, Int. Math. Res. Not. IMRN (2016), no.~13, 4103--4131.
  \MR{3544630}

\bibitem[Sah17]{saha-hybrid}
\bysame, \emph{Hybrid sup-norm bounds for {M}aass newforms of powerful level},
  Algebra and Number Theory \textbf{11} (2017), no.~5, 1009--1045.

\bibitem[Sch02]{ralf-newforms}
R.~Schmidt, \emph{Some remarks on local newforms for {$\rm GL(2)$}}, J.
  Ramanujan Math. Soc. \textbf{17} (2002), no.~2, 115--147.

\bibitem[Sil94]{silverman-advanced}
J.~Silverman, \emph{Advanced topics in the arithmetic of elliptic curves},
  Graduate Texts in Mathematics, vol. 151, Springer-Verlag, New York, 1994.

\bibitem[Tem14]{templier-large}
Nicolas Templier, \emph{Large values of modular forms}, Camb. J. Math.
  \textbf{2} (2014), no.~1, 91--116. \MR{3272013}

\bibitem[Tun78]{tunnell}
J.~Tunnell, \emph{On the local {L}anglands conjecture for {$\rm GL(2)$}},
  Invent. Math. \textbf{46} (1978), no.~2, 179--200.

\end{thebibliography}
\end{document}